\newtheorem{theorem}{Theorem}
\newtheorem{lemma}[theorem]{Lemma}
\newtheorem{remark}[theorem]{\it Remark}
\newcommand{\edproof}{ $\hfill {\Box}$}
\newcommand\Real{{\mathfrak R}{\mathfrak e}\,} %
\newcommand{\R}{\mathbb{{R}}}
\newcommand{\Z}{\mathbb{{Z}}}
\newcommand{\N}{\mathbb{{N}}}
\newcommand{\C}{\mathbb{{C}}}
\begin{document}

\title[Discrete H\"older spaces]{Discrete H\"older spaces,  their characterization via semigroups associated to the discrete Laplacian and kernels estimates.}

\author[L. Abadias]{Luciano Abadias}
\address{\newline Luciano Abadias\newline Departamento de Matem\'aticas, Instituto Universitario de Matem\'aticas y Aplicaciones, Universidad de Zaragoza, 50009 Zaragoza, Spain.}
\email{labadias@unizar.es}

\author[M. De Le\'on-Contreras]{Marta De Le\'on-Contreras}
\address{\newline
       Marta De Le\'on-Contreras \newline
       Department of  Mathematics and Statistics,  
       University of Reading, \newline
	   Reading RG6 6AX, United Kingdom}
\email{m.deleoncontreras@reading.ac.uk }

\thanks{The first named author have been partly supported by Project PID2019-105979GB-I00 of the MICINN of Spain, Project E26-17R, D.G. Arag\'on, Universidad de Zaragoza, Spain, and Project for Young Researchers JIUZ-2019-CIE-01 of Fundaci\'on Ibercaja and Universidad de Zaragoza, Spain, and the second author has been partially supported by EPSRC Research Grant EP/S029486/1.}

\subjclass[2020]{35R11, 35B65, 35K05, 39A12, 47D07}

\keywords{Discrete H\"older spaces, discrete gaussian and Poisson semigroups, fractional discrete laplacian.}

\begin{abstract}
In this paper we characterize  the discrete H\"older spaces by means of the heat and Poisson semigroups associated to the discrete Laplacian. These characterizations allow us to get regularity properties of fractional powers of the discrete Laplacian and the Bessel potentials along these spaces and also in the discrete Zygmund spaces in a more direct way than using the pointwise definition of the spaces.

To obtain our results, it has been crucial to get boundedness properties of the heat and Poisson kernels and their derivatives in both space and time variables. We believe that these estimates are also of independent interest.
\end{abstract}

\date{}

\maketitle

\section{Introduction}
\setcounter{theorem}{0}
\setcounter{equation}{0}

Classical H\"older spaces $C^\alpha(\R^n), \, \alpha>0$, $\alpha\not\in\N$ (also denoted by $C^{k,\beta}(\R^n)$ or $C^{k+\beta}(\R^n)$, being $k+\beta=\alpha,$ $k\in\N_0,$ and $0<\beta<1$) are classes of smooth functions that are very important in partial differential equations, harmonic analysis and function theory. When $0<\alpha<1$ they are defined as the set of (bounded) functions 
$f$  such that  \begin{equation}\label{liphol} |f(x+z)-f(x)| \le C |z|^\alpha\:\:x,z\in\R^n.\end{equation} These spaces are in between of  the space of bounded continuous functions, $\mathcal{C}^0(\R^n)$, and the one of bounded differentiable functions with bounded continuous derivative, $\mathcal{C}^1(\R^n).$  These  spaces are  usually called either Lipschitz or H\"older classes. For $\alpha =1$ the natural space was introduced by Zygmund  \cite[Chapter II]{Zygmund} and it is the set of continuous and bounded functions  $f$ such that $$ |f(x+z)+f(x-z)-2f(x)  | \le C |z|, \:\; x,z\in\R^n.$$ This  space is commonly  known as the Zygmund's space and we shall denote it  by $Z$. It can be shown that if we denote  by ${\rm Lip}$ the space of functions satisfying (\ref{liphol}) for  $\alpha =1$, then 
$\mathcal{C}^1(\R^n) {\subsetneq}\; {\rm Lip}  {\subsetneq}\; Z$, see \cite{Krantz1}.
 Given $\alpha>1$, $C^{\alpha}(\R^n) $ is the set of functions such that all {the} derivatives {of} order less or equal than $[\alpha]$ are continuous and  bounded and the derivatives of order  $[\alpha]$ belong to $C^{\alpha-[\alpha]}(\R^n)$.

In  the 60's of last century, Stein and Taibleson, see \cite{Stein}  and \cite{Tai1,Tai2,Tai3}, characterized bounded H\"older functions via some integral estimates of the Poisson semigroup, $\{e^{-y\sqrt{-\Delta}}\}_{y>0},$ and of  the Gauss semigroup, $\{e^{\tau{\Delta}}\}_{\tau>0}$. 	 The advantage of this kind of results is that the semigroup descriptions allow to obtain regularity results in these spaces in a more direct way, avoiding the long, tedious and sometimes cumbersome computations that are needed when the pointwise expressions are handled.
The works of Taibleson and Stein raise the question of analyzing  H\"older spaces adapted to different  ``Laplacians'' and to find  their pointwise
and semigroup characterizations. 

{In \cite{GU}, Lipschitz spaces adapted to the Ornstein-Ulhenbeck operator, $\mathcal{O}=-\frac{1}{2}\Delta+x\cdot\nabla$,  were defined by means of its Poisson semigroup,  $\{e^{-y\sqrt{\mathcal{O}}}\}_{y>0}$, and in \cite{LS} a pointwise characterization was obtained for $0<\alpha<1$. }

In the case of Schr\"odinger operators, $ -\Delta+ V$  on 
 $\mathbb{R}^n,$ $ n \ge 3$,
where $V$  satisfies a reverse H\"older inequality for some $q> n/2$, adapted Lipschitz classes were pointwise defined  in \cite{BH} for  $0<\alpha<1$. In \cite{MSTZ} the authors characterized these spaces by means of the Poisson semigroup $\{e^{-y\sqrt{-\Delta+ V}}\}_{y>0}$ and they got  boundedness  of fractional powers of $-\Delta+ V$ 
 in these spaces for  $0<\alpha<1$. Recently, in \cite{DL-CT2} it was extended the pointwise and semigroup (heat and Poisson) characterizations  to the range $0<\alpha\le 2-n/q$.   In addition, the authors used those semigroups definitions to get regularity results regarding fractional operators related to $-\Delta+V$.  Moreover, in the particular case of the Hermite operator, $-\Delta+|x|^2$, in \cite{DL-CT1} the authors got, for every $\alpha>0,$ a characterization  by means of the heat and Poisson semigroups of the adapted H\"older spaces defined in \cite{ST2} and also of the adapted parabolic  H\"older spaces introduced in \cite{DL-CT1}.

 Regarding the heat operator, $\partial_t - \Delta$,  in \cite{ST3} the parabolic H\"older spaces introduced by Krylov, see  \cite{Krylovbook},  were  characterized by means of the  Poisson semigroup $\{e^{-y\sqrt{\partial_t - \Delta}}\}_{y>0}$ and the authors used this semigroup characterization to show  regularity properties for fractional powers  $(\partial_t -\Delta_x )^{\pm \sigma}$. 
 
 In \cite{BB} it is proved that, in a general metric measure
space $(M, d, \mu)$  where $\mu$ is doubling, if $L$ is an operator  such that the heat semigroup $\{e^{tL}\}_{t>0}$ is conservative, i.e, $e^{tL}1=1$, and the associated heat kernel satisfies Gaussian bounds and a Lipschitz condition in the spatial variable, then the H\"older spaces adapted to $L$ (defined by increments) can be characterized by means of the heat semigroup, for $0<\alpha<1.$

In this paper we shall deal with the discrete H\"older spaces, $C^\alpha(\Z)$, $\alpha>0$, $\alpha\not\in\N$, whose definition we are going to recall in the following lines, and also we will introduce new discrete Zygmund classes, $Z_\alpha,$ $\alpha\in\N$. Our first aim is to prove semigroup characterizations of these spaces by using the heat and Poisson semigroups associated to the discrete Laplacian, $-\Delta_d.$ 
The heat kernel associated to the discrete Laplacian neither has a Gaussian control  nor satisfies a
 Lipschitz condition as in \cite{BB},  see \cite{GKP,Pang}. Therefore, our results are not covered by the ones in \cite{BB} and the kernels have not the same kind of good estimates and homogeneity properties than in the works in the literature we cited above. These are the first main difficulties we have faced in this problem and we have been able to sort them out by obtaining new estimates for the kernels and their derivatives, see Section \ref{semigroups}.

For $f:\Z\rightarrow\R,$ consider the discrete derivatives ``from the right'' and ``from the left'',
$$
\delta_{right}f(n):=f(n)-f(n+1), \quad\quad \delta_{left}f(n):=f(n)-f(n-1).
$$
Observe that $\delta_{right}\delta_{left}f=\delta_{left}\delta_{right}f$ and this implies that every combination of these operators is not affected by the order when they are applied. For more properties of these operators see \cite{ADT}.

Now we recall the definition of discrete H\"older spaces introduced in \cite{CRSTV}. Let   $f:\Z\to \R$ be a  sequence. For $0<\alpha<1$ we say that $f\in C^{\alpha}(\Z)$ if $$\sup_{n\neq m}\frac{|f(n)-f(m)|}{|n-m|^{\alpha}}<\infty.$$ 
For $1<\alpha<2$, $f\in C^{\alpha}(\Z)$ if
$$
\sup_{n\neq m}\frac{|\delta_{right/left}f(n)-\delta_{right/left}f(m)|}{|n-m|^{\alpha-1}}<\infty.
$$
In general, for $\alpha=k+\beta>0$, where $k\in\N_0:=\N\cup\{0\}$ and $0<\beta<1,$ we say that 
$f\in C^{\alpha}(\Z)$ if for each $l,s\in\N_0$ such that $l+s=k$
$$
\sup_{n\neq m}\frac{|\delta_{right/left}^{l,s}f(n)-\delta_{right/left}^{l,s}f(m)|}{|n-m|^{\beta}}<\infty,
$$
where $\delta_{right/left}^{l,s}:=\delta_{right}^{l}\delta_{left}^{s}$ (or any other combination of these operators such that in the end you apply $l $ times $\delta_{right}$ and $s$ times $\delta_{left}$).

Observe that $\ell^\infty(\Z)$ functions are trivially in $C^\alpha(\Z).$ Therefore, we are not going to consider bounded functions. However, we will prove, see Lemma \ref{sizeCalpha}, that for every $f\in C^\alpha(\Z)$, $\alpha>0,$ there exists $C>0$ such that
$$|f(n)|\le C(1+|n|^\alpha), \quad n\in\Z.
$$

Moreover, when $\alpha\in\N,$ we define the {\it discrete Zygmund classes}, $Z_\alpha$,  as
\begin{align*}
&Z_1:=\Bigg\{f:\Z\to \R: \:\frac{f}{1+|\cdot|}\in\ell^\infty(\Z) \quad  \text{ and }\: \sup_{n\neq 0}\frac{\|f(\cdot+n)+f(\cdot-n)-2f(\cdot)\|_\infty}{|n|}<\infty\Bigg\}\\ 
&\text{ and for  }\alpha\in\N\setminus\{1\},\\
&Z_\alpha:=\Big\{f:\Z\to \R: \: \frac{f}{1+|\cdot|^\alpha}\in\ell^\infty(\Z) \quad \text{ and   } \delta_{right/left}^{l,s}f\in Z_1, \: \; l+s=\alpha-1.\Big\}
\end{align*}
Both definitions of $C^\alpha(\Z)$ and $Z_\alpha$ involve pointwise estimates of the functions. Our first aim will be to get  their characterizations by means of semigroups.

Let $\Delta_d$ denote the discrete Laplacian on $\Z,$ that is, for each $f:\Z\to\R,$
$$(\Delta_d  f)(n):=f(n+1)-2f(n)+f(n-1),\quad n\in\Z.$$
The solution of the discrete heat problem \begin{equation}\label{eq1}
\left\{\begin{array}{ll}
\partial_t u(t,n)-\Delta_d u(t,n)=0,&n\in\Z,\,t\geq 0,\\
u(0,n)=f(n),&n\in\Z,
\end{array} \right.
\end{equation}
is given by the convolution $u(t,n)=e^{t\Delta_d}f(n):=\sum_{j\in\Z}G(t,n-j)f(j)=\sum_{j\in\Z}G(t,j)f(n-j),$ where the discrete heat kernel is $$G(t,n)=e^{-2t}I_{n}(2t),\quad n\in\Z, \: t>0,$$ being $I_n$ the modified Bessel function of the first kind and order $n\in\Z,$  see  Section \ref{semigroups} for more details. 

It seems that H. Bateman in \cite{Bateman} was the first author dealing with the solution of \eqref{eq1}. Moreover, he studied a broad set of differential-difference equations (heat and wave equations), whose solutions are given in terms of special functions; the Bessel function $J_n$, the Bessel function of imaginary argument $I_n$, the Hermite polynomial $H_n$ and the exponential function. In the last years many mathematicians have been working in this discrete heat setting. For example, in \cite{I2, I}, the author studies large time behaviour for $e^{t\Delta_d}f$ in $\ell^p(\Z)$ spaces by using the semidiscrete Fourier transform. In \cite{CGRTV}, the authors do a deep harmonic analysis study of this problem. In \cite{LR}, the authors study the spectrum of $\Delta_d$ on $\ell^p(\Z)$, the associated wave problem, and holomorphic properties of $e^{z\Delta_d}f$. In \cite{Slavik} the author proves that the solution of \eqref{eq1} behaves asymptotically as the mean of the initial value, and in \cite{AG-CM} the authors study large time behaviour in $\ell^p(\Z)$ for the solutions of \eqref{eq1} with a non-homogeneous linear forcing term.

On the other hand, the solution of the discrete Poisson problem 
\begin{equation*}\label{eqPoisson}
\left\{\begin{array}{ll}
\partial_y^2 v(y,n)-\Delta_d v(y,n)=0,&n\in\Z,\,y\geq 0,\\
v(0,n)=f(n),&n\in\Z,
\end{array} \right.
\end{equation*}
is denoted by $v(y,n)=e^{-y\sqrt{-\Delta_d}}f(n),$ $y>0,$ $n\in\Z.$ Moreover, Bochner's subordination formula allows us to write
\begin{align*}
    e^{-y\sqrt{-\Delta_d}}f(n)&=\frac{y}{2\sqrt{\pi}}\int_0^\infty \frac{e^{-\frac{y^2}{4t}}}{t^{3/2}} e^{t\Delta_d}f(n)dt=\sum_{j\in\Z}\left(\frac{y}{2\sqrt{\pi}}\int_0^\infty \frac{e^{-\frac{y^2}{4t}}}{t^{3/2}} G(t,j)dt\right)f(n-j)\\
    &:=\sum_{j\in\Z}P(y,j)f(n-j), \quad y>0, \: \; n\in\Z.
\end{align*}
To the best of our knowledge, an explicit expression of the Poisson kernel $P(y,j)$ is not known. However, by using the subordination formula and our new estimates for the heat kernel, we will be able to obtain useful bounds for $P(y,j)$, see Section \ref{semigroups}.

Now we consider the following spaces associated to the discrete Laplacian. Let $\alpha>0.$ We define \begin{align*}
\Lambda^{\alpha}_H:=\Big\{ f:\Z\to\R\,:\,  \frac{f}{1+|\cdot|^\alpha}\in\ell^\infty(\Z)  \text{ and }\exists\, C_\alpha>0 \text{  such that }  \|\partial_t^k e^{t\Delta_d}f\|_{\infty}\leq C_\alpha t^{-k+\alpha/2}, \\\quad k=[\alpha/2]+1,\: t>0\Big\}.
\end{align*}
\begin{align*}
    \Lambda^{\alpha}_P:=\Big\{ f:\Z\to\C\,:\,  \sum_{j\in\Z}\frac{|f(j)|}{1+|j|^2}<\infty \text{ and }\exists\, \widetilde{C_\alpha}>0 \text{  such that }  \|\partial_y^l e^{-y\sqrt{-\Delta_d}}f\|_{\infty}\leq \widetilde{C_\alpha} y^{-l+\alpha}, \\\quad l=[\alpha]+1,\: y>0\Big\}.
\end{align*}
The condition on the functions $\frac{f}{1+|\cdot|^\alpha}\in\ell^\infty(\Z) $, $\alpha>0$, will be enough to have the heat semigroup and its derivatives well defined. However, for the case of the Poisson semigroup we need a more restrictive condition, $\sum_{j\in\Z}\frac{|f(j)|}{1+|j|^2}<\infty$, see Section \ref{semigroups} for the details.

Now we present our main results. The first main theorem we prove is the characterization, for every $\alpha>0$, of the pointwise spaces $C^\alpha(\Z)$ and $Z_\alpha$, by means of the heat and Poisson semigroups.

\begin{theorem}\label{caractodos}
\textcolor{white}{}\newline
\begin{itemize}
    \item[(A)]Let $\alpha>0.$\\ \begin{itemize}
        \item[(A.1)] If $\alpha\not\in\N,$ then $\Lambda_H^\alpha=C^\alpha(\Z)$.\\
        
           \item[(A.2)] If $\alpha\in\N,$ then $\Lambda_H^\alpha=Z_\alpha$.\\
           
    \end{itemize}
    \item[(B)]Let $f:\Z\to\R$ such that $\sum_{j\in\Z}\frac{|f(j)|}{1+|j|^2}<\infty$. \\ \begin{itemize}
        \item [(B.1)] For every $\alpha>0,$ $\alpha\not\in\N,$ $$f\in C^\alpha(\Z)\Longleftrightarrow f\in\Lambda_H^\alpha\Longleftrightarrow f\in\Lambda_P^\alpha.$$
        
        \item [(B.2)]  For every $\alpha\in \N$,
        $$f\in Z_\alpha\Longleftrightarrow f\in\Lambda_H^\alpha\Longleftrightarrow f\in\Lambda_P^\alpha.$$
    \end{itemize}
\end{itemize}

\end{theorem}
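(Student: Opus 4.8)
The plan is to prove the equivalences by establishing a chain of inclusions, treating the heat characterization as the core and then using the subordination formula to pass to the Poisson one. I would organize the argument around the following intermediate quantities: pointwise increments (the defining quantities of $C^\alpha(\Z)$ and $Z_\alpha$), the heat semigroup bounds $\|\partial_t^k e^{t\Delta_d}f\|_\infty \le C t^{-k+\alpha/2}$, and the Poisson semigroup bounds $\|\partial_y^l e^{-y\sqrt{-\Delta_d}}f\|_\infty \le C y^{-l+\alpha}$. The whole argument rests on the kernel estimates for $G(t,n)$, $P(y,n)$ and their space/time derivatives announced for Section \ref{semigroups}; I would invoke those as black boxes, in particular bounds of the type $\sum_{j\in\Z}|\partial_t^k G(t,j)|(1+|j|^\alpha)\lesssim t^{-k+\alpha/2}$ and analogous weighted $\ell^1$ bounds with the discrete differences $\delta_{right/left}$ applied to the kernel, plus the conservation property $\sum_j G(t,j)=1$ and its consequence $\sum_j \delta^a_{right}\delta^b_{left}G(t,j)=0$ for $a+b\ge1$.

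First I would prove the direction $C^\alpha(\Z)$ (resp.\ $Z_\alpha$) $\Rightarrow \Lambda_H^\alpha$. Here the point is that $e^{t\Delta_d}$ commutes with the difference operators, so $\partial_t^k e^{t\Delta_d}f(n) = \sum_j \partial_t^k G(t,j)\, f(n-j)$, and using $\sum_j \partial_t^k G(t,j)=0$ (for $k\ge1$) one can insert a Taylor-type correction: subtract off a discrete polynomial interpolating $f$ near $n$ so that the remaining increments are controlled by the H\"older/Zygmund seminorm, and then the weighted kernel estimates give the desired power of $t$. For $\alpha\notin\N$ with $\alpha=k_0+\beta$ one subtracts the contribution of $l+s=[\alpha]$ differences; for $\alpha\in\N$ one uses the second-difference (Zygmund) cancellation $G(t,j)+G(t,-j)-2G(t,0)$-type symmetrization, exploiting $G(t,j)=G(t,-j)$. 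The converse direction $\Lambda_H^\alpha \Rightarrow C^\alpha(\Z)$ (resp.\ $Z_\alpha$) is the reconstruction step: write $f$ (modulo a harmless bounded/polynomial part, handled by the growth hypothesis $f/(1+|\cdot|^\alpha)\in\ell^\infty$) via the fundamental theorem of calculus in $t$, $f = e^{t_0\Delta_d}f - \int_0^{t_0}\partial_t e^{t\Delta_d}f\,dt$ iterated $k$ times, and estimate the relevant pointwise increment of $f$ by splitting the $t$-integral at $t\sim |n-m|^2$: for small $t$ use the increment estimate on $\partial_t^k e^{t\Delta_d}f$ coming from the kernel's spatial smoothness, for large $t$ use the raw bound $\|\partial_t^k e^{t\Delta_d}f\|_\infty\le Ct^{-k+\alpha/2}$. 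The bookkeeping of which discrete differences survive and ensuring convergence of the $t$-integrals at both ends is the technical heart.

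For part (B) — the Poisson characterization under the stronger summability hypothesis $\sum_j |f(j)|/(1+|j|^2)<\infty$ — I would deduce $\Lambda_H^\alpha \Leftrightarrow \Lambda_P^\alpha$ directly from Bochner's subordination formula $e^{-y\sqrt{-\Delta_d}}f = \frac{y}{2\sqrt\pi}\int_0^\infty t^{-3/2}e^{-y^2/4t} e^{t\Delta_d}f\,dt$. Differentiating $l=[\alpha]+1$ times in $y$ and plugging in the bound $\|\partial_t^k e^{t\Delta_d}f\|_\infty\le Ct^{-k+\alpha/2}$ (with $k=[\alpha/2]+1$, after integrating by parts in $t$ to convert the extra $y$-derivatives into $t$-derivatives, since $\partial_y(t^{-1/2}e^{-y^2/4t})$ relates to $\partial_t(\cdot)$) yields $\|\partial_y^l e^{-y\sqrt{-\Delta_d}}f\|_\infty\le C y^{-l+\alpha}$ by the standard scaling $\int_0^\infty t^{-3/2-a}e^{-y^2/4t}\,dt \sim y^{-1-2a}$. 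The reverse implication $\Lambda_P^\alpha\Rightarrow\Lambda_H^\alpha$ (or more efficiently $\Lambda_P^\alpha\Rightarrow C^\alpha(\Z)$ directly, mirroring the heat reconstruction with the Poisson kernel estimates from Section \ref{semigroups}) closes the loop; alternatively one can run the same fundamental-theorem-of-calculus-in-$y$ argument using the Poisson kernel's spatial-increment bounds. I expect the main obstacle throughout to be the absence of Gaussian decay and exact homogeneity for $G(t,n)$: the modified Bessel kernel has the wrong (slower, and $t$-dependent-in-a-non-scale-invariant-way) tails, so the "small $t$ vs large $t$" split must be done with the actual estimates proved in Section \ref{semigroups} rather than with clean Gaussian scaling, and one must be careful that the weighted sums $\sum_j |\partial_t^k\delta^a_{right}\delta^b_{left}G(t,j)|(1+|j|^\beta)$ genuinely produce the exponent $t^{-k-(a+b)/2+\beta/2}$ uniformly — this is exactly where the new kernel estimates do the work.
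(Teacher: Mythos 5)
Your plan is sound in outline, but for $\alpha>1$ it takes a genuinely different route from the paper. The paper never performs a discrete Taylor subtraction or an all-$\alpha$-at-once reconstruction: it proves the base cases directly --- Theorem \ref{th1} for $0<\alpha<1$ (where $C^\alpha\Rightarrow\Lambda_H^\alpha$ uses only $\sum_jG(t,j)=1$ plus weighted kernel sums, $\Lambda_H^\alpha\Rightarrow\Lambda_P^\alpha$ is the subordination Theorem \ref{calor-Poisson}, and $\Lambda_P^\alpha\Rightarrow C^\alpha$ is a fundamental-theorem-of-calculus argument at scale $y=|n-m|$), and Theorem \ref{equiless2} for the Zygmund condition when $0<\alpha<2$ --- and then handles general $\alpha$ by induction through the derivative theorems (Theorems \ref{derivada} and \ref{derivadaP}): $f\in\Lambda_H^\alpha$ iff $\delta_{right/left}f\in\Lambda_H^{\alpha-1}$, which dovetails with the definition of $C^\alpha(\Z)$ and $Z_\alpha$ via differences. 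Your direct scheme shortens the logical chain and avoids Theorems \ref{derivada}--\ref{derivadaP} (whose growth half is itself delicate), at the price of a discrete Taylor formula with H\"older/Zygmund remainder and of moment cancellations beyond the one you quote: $\sum_j\partial_t^kG(t,j)=0$ does not suffice once $\alpha>1$; you need $\sum_j j^m\partial_t^kG(t,j)=0$ for all $m\le[\alpha]$ with $k=[\alpha/2]+1$. This is true --- odd moments vanish by symmetry and, by \eqref{polyI}, the even moments are polynomials in $t$ of degree at most $[\alpha/2]<k$ --- but it must be stated and used. Likewise, the weighted bounds $\sum_j|\partial_t^k\delta_{right}^{a}\delta_{left}^{b}G(t,j)|(1+|j|^\beta)\le Ct^{-k-(a+b)/2+\beta/2}$ that you black-box are not lemmas of Section \ref{semigroups}; they have to be assembled from Lemmata \ref{heatbounds}, \ref{kernelest} and \eqref{polyI}, exactly as the paper does inline in the proof of Theorem \ref{th1}.

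Two further points should not stay implicit in your converse direction. First, the target spaces carry growth conditions: $C^\alpha\Rightarrow\Lambda_H^\alpha$ needs $|f(n)|\le C(1+|n|^\alpha)$ (Lemma \ref{sizeCalpha}), and $\Lambda_H^\alpha\Rightarrow Z_\alpha$ for $\alpha\in\N$, $\alpha\ge2$, requires $|\delta_{right/left}^{l,s}f(n)|\le C(1+|n|)$ for $l+s=\alpha-1$; in the paper this is the laborious first half of Theorem \ref{derivada}, and in your scheme it must come from a separate FTC-in-$t$ estimate at scale $t_0\sim|n|^2$, not from the H\"older seminorm alone. Second, your reconstruction and your Poisson step both use the transfer of pure time-derivative bounds to mixed space--time bounds and the vanishing of the semigroup derivatives at infinity; these are the analogues of Lemmata \ref{cambioyx} and \ref{decay}, which follow from the kernel estimates plus the semigroup property but are separate statements rather than kernel bounds. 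With those ingredients made explicit, your plan should go through and would constitute an alternative, more self-contained proof for large $\alpha$.
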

To prove previous theorem some estimates about the discrete heat and Poisson kernels  and their derivatives are crucial (see Lemmata \ref{pointbessel}, \ref{heatbounds}, \ref{kernelest}, \ref{Poissonlema}, \ref{Poissonest}). These results complement, extend and improve some of the ones obtained in \cite{AG-CM,I2,I}. We believe that ours results are also of independent interest because we give general pointwise and $\ell^1$-estimates for the difference and derivatives of any order of the heat and Poisson discrete kernels.

Once the semigroup characterization is obtained, we have been able to get regularity results for fractional operators related with $\Delta_d$, such as Bessel potentials, $(I-\Delta_d)^\beta$, $\beta>0$, and the fractional powers $(-\Delta_d)^{\pm \beta}.$
For the definitions of these operators see Section \ref{applications}.

  \begin{theorem}\label{Besselpot}Let $\alpha,\beta>0$.
   	\begin{itemize}
	\item[(i)]  If
		$f\in 		\Lambda_H^{\alpha}$, then  $(I-\Delta_d)^{-\beta/2} f \in 	\Lambda_H^{{\alpha+\beta}} $. 
		\item[(ii)] If $f\in \ell^\infty(\Z)$, then 	 $	(I-\Delta_d)^{-\beta/2}f  \in 	\Lambda_H^{{\beta}} $.
			\end{itemize}
   \end{theorem}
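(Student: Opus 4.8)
The plan is to exploit the semigroup characterization $\Lambda_H^\gamma$ of the H\"older/Zygmund scale together with the subordination-type formula expressing the Bessel potential $(I-\Delta_d)^{-\beta/2}$ as an integral average of the heat semigroup. Recall that, by the standard formula for negative fractional powers of $I-\Delta_d$,
\[
(I-\Delta_d)^{-\beta/2}f=\frac{1}{\Gamma(\beta/2)}\int_0^\infty t^{\beta/2-1}e^{-t}e^{t\Delta_d}f\,dt,
\]
which converges for $f$ with at most polynomial growth because $\|e^{t\Delta_d}f\|_\infty$ grows at most polynomially in $t$ (this is where the growth condition $f/(1+|\cdot|^\alpha)\in\ell^\infty$ enters, via the $\ell^1$-kernel estimates of Section~\ref{semigroups}). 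For part~(i) I would first check that $g:=(I-\Delta_d)^{-\beta/2}f$ still satisfies the polynomial growth bound required to belong to $\Lambda_H^{\alpha+\beta}$; this follows from Lemma~\ref{sizeCalpha} (giving $|f(n)|\le C(1+|n|^\alpha)$) combined with the $\ell^1$ bounds on $G(t,\cdot)$ and the extra $e^{-t}$ decay, which kills the contribution of large $t$.

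The core of part~(i) is the semigroup estimate: with $k=[(\alpha+\beta)/2]+1$ I must bound $\|\partial_t^k e^{t\Delta_d}g\|_\infty$ by $C t^{-k+(\alpha+\beta)/2}$. Using the semigroup property I write $e^{t\Delta_d}g=(I-\Delta_d)^{-\beta/2}e^{t\Delta_d}f$ and commute $\partial_t^k$ inside the defining integral for the Bessel potential:
\[
\partial_t^k e^{t\Delta_d}g=\frac{1}{\Gamma(\beta/2)}\int_0^\infty s^{\beta/2-1}e^{-s}\,\partial_t^k e^{(t+s)\Delta_d}f\,ds.
\]
Now $\partial_t^k e^{(t+s)\Delta_d}f=\partial_r^k e^{r\Delta_d}f\big|_{r=t+s}$, and since $f\in\Lambda_H^\alpha$ we have good control on $\partial_r^m e^{r\Delta_d}f$ only for the specific order $m=[\alpha/2]+1$; to recover all intermediate and higher derivatives I would use the standard reproduction trick $\partial_r^k e^{r\Delta_d}f=\partial_r^{k-m}e^{(r/2)\Delta_d}\big(\partial_\rho^m e^{\rho\Delta_d}f\big|_{\rho=r/2}\big)$ together with the $\ell^1$-estimates $\|\partial_r^{j}G(r,\cdot)\|_{\ell^1(\Z)}\le C r^{-j}$ from Lemmata~\ref{heatbounds}--\ref{kernelest}, yielding $\|\partial_r^k e^{r\Delta_d}f\|_\infty\le C r^{-k+\alpha/2}$ for every $k\ge[\alpha/2]+1$. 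Substituting this into the integral gives
\[
\|\partial_t^k e^{t\Delta_d}g\|_\infty\le C\int_0^\infty s^{\beta/2-1}e^{-s}(t+s)^{-k+\alpha/2}\,ds,
\]
and the elementary estimate $\int_0^\infty s^{\beta/2-1}e^{-s}(t+s)^{-k+\alpha/2}ds\le C\, t^{-k+(\alpha+\beta)/2}$ (split at $s=t$: for $s\le t$ bound $(t+s)^{-k+\alpha/2}\approx t^{-k+\alpha/2}$ and integrate $s^{\beta/2-1}$ up to $t$ giving $t^{\beta/2}$; for $s\ge t$ use $(t+s)^{-k+\alpha/2}\le s^{-k+\alpha/2}$ and the exponential decay, noting $-k+\alpha/2<0$) closes the bound. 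For part~(ii), the argument is the same with $\alpha=0$: a bounded $f$ trivially has $\|e^{t\Delta_d}f\|_\infty\le\|f\|_\infty$ and $\|\partial_r^k e^{r\Delta_d}f\|_\infty\le C r^{-k}\|f\|_\infty$ from the $\ell^1$-kernel estimates, and the same integral computation produces $\|\partial_t^k e^{t\Delta_d}g\|_\infty\le C t^{-k+\beta/2}$ with $k=[\beta/2]+1$, while boundedness of $f$ gives boundedness (hence the required growth) of $g$.

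The main obstacle I anticipate is the bookkeeping needed to pass from the single derivative order $m=[\alpha/2]+1$ controlled by the definition of $\Lambda_H^\alpha$ to arbitrary order $k=[(\alpha+\beta)/2]+1$ that appears in the definition of $\Lambda_H^{\alpha+\beta}$; this requires the $\ell^1$-boundedness of the time-derivatives of the heat kernel with the correct homogeneity $\|\partial_r^{j}G(r,\cdot)\|_{\ell^1}\le C r^{-j}$, which is exactly the kind of estimate established in Section~\ref{semigroups}. A secondary technical point is justifying the interchange of $\partial_t^k$ with the Bessel-potential integral and of the $\ell^1$-summation with that integral, which is routine given the polynomial growth of $f$, the exponential factor $e^{-s}$, and the aforementioned kernel bounds. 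Once these are in place, both parts follow from the one elementary integral estimate above.
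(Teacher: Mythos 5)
Your proposal is correct and follows essentially the same route as the paper: write $\partial_t^k e^{t\Delta_d}(I-\Delta_d)^{-\beta/2}f$ as an integral of $\partial^k_w e^{w\Delta_d}f\big|_{w=t+s}$ against $e^{-s}s^{\beta/2-1}\,ds$, upgrade the derivative order from $[\alpha/2]+1$ to $k=[(\alpha+\beta)/2]+1$ via the semigroup property and the $\ell^1$-kernel bounds (this is exactly Lemma \ref{subirk}), and then estimate the resulting one-dimensional integral, the only cosmetic difference being that you split at $s=t$ while the paper changes variables $s=yu$; the growth bound and part (ii) are likewise handled as in the paper.
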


To define the fractional powers of $\Delta_d$ it is necessary to define auxiliary  spaces of sequences $\ell_{\beta}$, $\beta>-1/2,$ $\beta\neq 0,$
$$
\ell_{ \beta}:=\left\{u: \Z\to\R: \:\; \sum_{m\in\Z}\frac{|u(m)|}{(1+|m|)^{1+ 2\beta}}<\infty\right\},
$$
These spaces are the discrete analogue of the spaces needed in the case of the Laplacian in $\R^n,$ see \cite{Silvestre}, and they were introduced in \cite{CRSTV}  for $\beta\in (-1/2,1)$, $\beta\neq 0$ and, for sequences $f\in\ell_{\beta}$, the authors got 
a pointwise convolution expression for $(-\Delta_d)^{\beta}f.$ We consider these spaces $\ell_{\beta}$ for any $\beta>-1/2,$ {$\beta\neq 0,$} and we are able to extend the results in  \cite{CRSTV} and get the pointwise expression of $(-\Delta_d)^{\beta}f$ for $\beta\geq 1,$ see Lemma \ref{lbeta} and Remark \ref{nucleoderiv}.

The following theorems were proved in \cite[Theorems 1.5 and 1.6]{CRSTV} for positive powers $0<\beta<1$ and negative powers with $0<\beta<1/2$,  in the discrete H\"older classes.  In \cite{CRSTV}, the authors obtained their results by using the pointwise definition of the fractional powers of the Laplacian.  Our results for the positive powers covers all values of $\beta>0$ and our proofs will be more direct and systematic. Moreover, when $\alpha\in\N$ we are considering the Zygmund spaces, while in \cite{CRSTV} the authors consider other different spaces.
     
   \begin{theorem}[Schauder estimates]\label{teoSchau} Let $ \alpha>0$, $0<\beta<1/2$ and $f:\Z\to\R.$
   	\begin{itemize}
   		\item[(i)] If  $f\in 	\Lambda_H^{\alpha}\cap \ell_{-\beta}$, then $(-\Delta_d)^{-\beta} f \in 	\Lambda_H^{\alpha+2\beta} $. 
   		\item[(ii)] If $f\in \ell^\infty(\Z)\cap \ell_{-\beta}$ , then 	 $(-\Delta_d)^{-\beta} f \in 	\Lambda_H^{2\beta}  $.  
   	\end{itemize} 
   \end{theorem}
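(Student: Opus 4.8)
The plan is to deduce the Schauder estimates for the negative fractional powers $(-\Delta_d)^{-\beta}$ from Theorem \ref{Besselpot} together with the semigroup characterization in Theorem \ref{caractodos}, exactly as one does in the continuous setting. The key observation is that $(-\Delta_d)^{-\beta}$ and the Bessel potential $(I-\Delta_d)^{-\beta}$ differ by an operator that ``gains'' no regularity but is harmless on the spaces in question; concretely, one writes, for $\beta>0$,
\begin{equation*}
(-\Delta_d)^{-\beta}=(I-\Delta_d)^{-\beta}+\big[(-\Delta_d)^{-\beta}-(I-\Delta_d)^{-\beta}\big],
\end{equation*}
and the bracketed operator, by the subordination/functional-calculus formulas
\begin{equation*}
(-\Delta_d)^{-\beta}=\frac{1}{\Gamma(\beta)}\int_0^\infty t^{\beta-1}e^{t\Delta_d}\,dt,\qquad
(I-\Delta_d)^{-\beta}=\frac{1}{\Gamma(\beta)}\int_0^\infty t^{\beta-1}e^{-t}e^{t\Delta_d}\,dt,
\end{equation*}
has kernel $\frac{1}{\Gamma(\beta)}\int_0^\infty t^{\beta-1}(1-e^{-t})G(t,n)\,dt$, which using the heat kernel bounds from Lemmata \ref{heatbounds}–\ref{kernelest} (the factor $1-e^{-t}$ tames the singularity at $t=0$ since $\beta<1/2<1$, and $e^{-t}$ is irrelevant while the large-$t$ decay of $G(t,\cdot)$ handles $t\to\infty$) is absolutely summable; hence the bracketed operator maps $\ell_{-\beta}$ (or $\ell^\infty$) boundedly into $\ell^\infty(\Z)\subset\Lambda_H^\gamma$ for every $\gamma>0$.

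With this decomposition in hand, for part (i) I would take $f\in\Lambda_H^\alpha\cap\ell_{-\beta}$. Applying Theorem \ref{Besselpot}(i) with the parameter $\beta$ there equal to $2\beta$ here (note $(I-\Delta_d)^{-\beta}=(I-\Delta_d)^{-(2\beta)/2}$) gives $(I-\Delta_d)^{-\beta}f\in\Lambda_H^{\alpha+2\beta}$. The remainder term sends $f$ into $\ell^\infty(\Z)$, which by Theorem \ref{caractodos} and the trivial inclusion $\ell^\infty(\Z)\subset\Lambda_H^{\gamma}$ for all $\gamma$ lies in $\Lambda_H^{\alpha+2\beta}$. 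Since $\Lambda_H^{\alpha+2\beta}$ is a vector space (the defining condition $\|\partial_t^k e^{t\Delta_d}f\|_\infty\le C t^{-k+\gamma/2}$ is clearly closed under addition, and likewise the growth condition $f/(1+|\cdot|^\gamma)\in\ell^\infty$), the sum $(-\Delta_d)^{-\beta}f$ is in $\Lambda_H^{\alpha+2\beta}$. Part (ii) is the same argument with Theorem \ref{Besselpot}(ii) in place of (i): for $f\in\ell^\infty(\Z)\cap\ell_{-\beta}$ one gets $(I-\Delta_d)^{-\beta}f\in\Lambda_H^{2\beta}$, the remainder lands in $\ell^\infty(\Z)\subset\Lambda_H^{2\beta}$, and adding gives the claim.

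The only genuinely non-routine point is verifying that $(-\Delta_d)^{-\beta}f$ is well defined and that the remainder operator really is bounded into $\ell^\infty(\Z)$; this is where the restriction $f\in\ell_{-\beta}$ and the range $0<\beta<1/2$ are used, and it is handled by the kernel estimates already established in Section \ref{semigroups} (in particular the pointwise bound $G(t,n)\lesssim t^{-1/2}$ for small $t$ and the off-diagonal/large-time decay, plus the summability $\sum_n (1+|n|)^{-1-2\beta}|f(n)|<\infty$). One should also record, as in Remark \ref{nucleoderiv}, the explicit convolution kernel of $(-\Delta_d)^{-\beta}$ to make the identity $(-\Delta_d)^{-\beta}=(I-\Delta_d)^{-\beta}+(\text{remainder})$ rigorous as an identity of convolution operators on $\ell_{-\beta}$. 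Everything else is bookkeeping with the two defining conditions of $\Lambda_H^\gamma$. I expect the main obstacle to be purely technical: tracking the growth condition $f/(1+|\cdot|^\gamma)\in\ell^\infty$ through the remainder term and through Theorem \ref{Besselpot}, rather than any conceptual difficulty.
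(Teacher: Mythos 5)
There is a genuine gap, and it is exactly at the one point you flagged as ``the only genuinely non-routine point''. The kernel of your remainder operator, $R(n)=\frac{1}{\Gamma(\beta)}\int_0^\infty t^{\beta-1}(1-e^{-t})G(t,n)\,dt$, is \emph{not} absolutely summable: since $\sum_{n\in\Z}G(t,n)=1$ for every $t>0$, Tonelli gives $\sum_{n\in\Z}R(n)=\frac{1}{\Gamma(\beta)}\int_0^\infty t^{\beta-1}(1-e^{-t})\,dt=+\infty$, the divergence coming from $t\to\infty$. The factor $1-e^{-t}$ only cures the $t\to 0$ singularity; the obstruction is the large-time (equivalently, slow spatial) behaviour, which subtracting the Bessel potential does not touch. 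Indeed $R(n)\simeq (1+|n|)^{2\beta-1}$ (upper bound from $G(t,n)\lesssim \min\{t/|n|^3,t^{-1/2}\}$, lower bound from the region $t\ge n^2$ where $G(t,n)\gtrsim t^{-1/2}$), i.e.\ $R$ decays exactly like the Riesz kernel $K_{-\beta}$ itself. Consequently the remainder does not map $\ell^\infty(\Z)\cap\ell_{-\beta}$ into $\ell^\infty(\Z)$: taking $f=\sum_k\chi_{[m_k,m_k+k]}$ with $m_k$ growing fast enough that $\sum_k k\,m_k^{2\beta-1}<\infty$, one has $f\in\ell^\infty\cap\ell_{-\beta}$ but $Rf(m_k)\gtrsim\sum_{0\le j\le k}(1+j)^{2\beta-1}\simeq k^{2\beta}\to\infty$. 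So the decomposition $(-\Delta_d)^{-\beta}=(I-\Delta_d)^{-\beta}+R$ does not reduce the theorem to Theorem \ref{Besselpot}; the remainder is as singular (at infinity) as $(-\Delta_d)^{-\beta}$ itself, and estimating it would require precisely the work you were trying to avoid. (Your auxiliary claim $\ell^\infty(\Z)\subset\Lambda_H^\gamma$ is correct, but it is not where the problem lies.)

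The paper handles the theorem directly, and this is essentially forced. First, the growth condition $\frac{|(-\Delta_d)^{-\beta}f|}{1+|\cdot|^{\alpha+2\beta}}\in\ell^\infty(\Z)$ is obtained from the pointwise convolution formula and the bound $|K_{-\beta}(m)|\lesssim(1+|m|)^{2\beta-1}$, splitting the sum into $|n-j|>2|n|$ (controlled by the $\ell_{-\beta}$ norm of $f$) and $|n-j|\le 2|n|$ (controlled by $|f(j)|\lesssim 1+|j|^\alpha$). Second, for the semigroup condition one uses Lemma \ref{sequeda} and Fubini to write $\partial_y^\ell e^{y\Delta_d}\bigl((-\Delta_d)^{-\beta}f\bigr)(n)=\frac{1}{\Gamma(\beta)}\int_0^\infty \partial_w^\ell e^{w\Delta_d}f(n)\big|_{w=y+\tau}\,\tau^{\beta}\frac{d\tau}{\tau}$, and then Lemma \ref{subirk} gives $\|\partial_w^\ell e^{w\Delta_d}f\|_\infty\le C w^{-\ell+\alpha/2}$ (or $w^{-\ell}$ for $f\in\ell^\infty$); choosing $\ell$ with $\ell>\alpha/2+\beta$ the $\tau$-integral converges at infinity \emph{without} the exponential factor present in the Bessel case and evaluates to $C\,y^{-\ell+(\alpha+2\beta)/2}$. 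That convergence issue at $\tau\to\infty$, governed by $0<\beta<1/2$ and by taking $\ell$ large enough, is the real content of the Schauder estimate, and it is exactly what the proposed decomposition cannot bypass.
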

 Since the operator $-\Delta_d$ consists of second order differences, $(-\Delta_d)^mf$, $m\in\N$, is well  defined for any sequence $f$. Thus, the fractional powers of $-\Delta$ of order $\beta>1$ can be defined  as
$$
(-\Delta_d)^\beta f=(-\Delta_d)^{[\beta]}((-\Delta_d)^{\beta-[\beta]}f), \quad \text{ for } f\in \ell_{\beta-[\beta]}.
$$
However, by using formula \eqref{positivepower} we have a definition of $(-\Delta_d)^\beta f$,  $\beta>1,$ $\beta\not\in\N$, that is valid for $f\in \ell_\beta$, which is a larger class of functions than $\ell_{\beta-[\beta]}$.
\begin{theorem}[H\"older estimates] \label{teoHolder}
   	Let $\alpha,\beta>0$ such that $0< 2\beta < \alpha $ and $f:\Z\to\R.$ \newline 
   	\begin{itemize}
   	\item [(i)] If $f\in\Lambda_H^{\alpha}\cap  {\ell_{\beta}}$, then 
   	$(-\Delta_d)^\beta f\in\Lambda_H^{\alpha-2\beta}$, being $(-\Delta_d)^\beta$ defined as in  \eqref{positivepower}.  
   	\item [(ii)] If $f\in\Lambda_H^{\alpha}$  and $\beta\in\N$, then 
   	$(-\Delta_d)^\beta f$, understood as the $\beta$-times iteration  of $(-\Delta)$, belongs to $\Lambda_H^{\alpha-2\beta}$.  
   	\end{itemize}
   \end{theorem}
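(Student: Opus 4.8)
The plan is to reduce everything to the semigroup characterization of Theorem \ref{caractodos}, so that we work only with $L^\infty$-bounds on time derivatives of the heat semigroup rather than with pointwise increments. For part (ii), where $\beta\in\N$, I would argue directly: if $f\in\Lambda_H^\alpha$ with $k=[\alpha/2]+1$, then since $-\Delta_d$ commutes with $e^{t\Delta_d}$ and with $\partial_t$, one has $\partial_t^{k}e^{t\Delta_d}\big((-\Delta_d)^\beta f\big)=(-\Delta_d)^\beta \partial_t^{k}e^{t\Delta_d}f$; and because $e^{t\Delta_d}$ is a semigroup, $(-\Delta_d)^\beta\partial_t^{k}e^{t\Delta_d}f=(-1)^\beta\partial_t^{k+\beta}e^{t\Delta_d}f$. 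Choosing $k$ adapted to $\alpha-2\beta$ (i.e. $k'=[(\alpha-2\beta)/2]+1$) and using the semigroup property together with analyticity to trade higher time-derivatives for powers of $t$, the bound $\|\partial_t^{k'}e^{t\Delta_d}((-\Delta_d)^\beta f)\|_\infty\le C\,t^{-k'+(\alpha-2\beta)/2}$ follows from $\|\partial_t^{k'+\beta}e^{t\Delta_d}f\|_\infty\le C\,t^{-(k'+\beta)+\alpha/2}$, which is exactly the $\Lambda_H^\alpha$ condition (possibly after one application of the reproducing identity $\partial_t^{N}e^{t\Delta_d}f=\partial_s^{N-M}e^{(t/2)\Delta_d}\big(\partial_s^{M}e^{s\Delta_d}f|_{s=t/2}\big)$ to realize the index $k'+\beta$ as the ``critical'' exponent $[\alpha/2]+1$ plus extra derivatives). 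The growth condition $\frac{(-\Delta_d)^\beta f}{1+|\cdot|^{\alpha-2\beta}}\in\ell^\infty(\Z)$ is immediate since $(-\Delta_d)^\beta$ is a finite-difference operator of order $2\beta$, lowering polynomial growth by $2\beta$; this also makes the heat semigroup of $(-\Delta_d)^\beta f$ well defined.

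For part (i), with $\beta>0$ not necessarily an integer and $(-\Delta_d)^\beta$ defined by formula \eqref{positivepower}, I would first record that $f\in\ell_\beta$ guarantees the pointwise convolution formula for $(-\Delta_d)^\beta f$ converges (Lemma \ref{lbeta} / Remark \ref{nucleoderiv}) and that $(-\Delta_d)^\beta f\in\ell_{\text{(something)}}$, in particular that $\frac{(-\Delta_d)^\beta f}{1+|\cdot|^{\alpha-2\beta}}\in\ell^\infty$, using the assumed size estimate $|f(n)|\le C(1+|n|^\alpha)$ from Lemma \ref{sizeCalpha} together with the decay of the fractional-power kernel. The core identity I want is the intertwining
\[
\partial_t^{k}e^{t\Delta_d}\big((-\Delta_d)^\beta f\big)=(-1)^{\beta}\,\partial_t^{k+\beta}e^{t\Delta_d}f ,
\]
valid for suitable $k$, where the non-integer ``$\partial_t^{k+\beta}$'' is interpreted via the spectral/subordination calculus; concretely one writes $(-\Delta_d)^\beta e^{t\Delta_d}=\frac{1}{\Gamma(m-\beta)}\int_0^\infty s^{m-\beta-1}\partial_s^{m}e^{(t+s)\Delta_d}\,ds$ for $m=[\beta]+1$ and then differentiates in $t$. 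Plugging in $\|\partial_r^{m+k}e^{r\Delta_d}f\|_\infty\le C\,r^{-(m+k)+\alpha/2}$ (the $\Lambda_H^\alpha$ bound, after adjusting the derivative index via the reproducing identity) and computing the resulting beta-type integral $\int_0^\infty s^{m-\beta-1}(t+s)^{-(m+k)+\alpha/2}\,ds\simeq t^{-\beta-k+\alpha/2}$, which converges precisely because $m-\beta>0$ and $(m+k)-\alpha/2>m-\beta$ i.e. $k>\alpha/2-\beta$ — satisfied for $k=[(\alpha-2\beta)/2]+1$ since $2\beta<\alpha$ — yields $\|\partial_t^{k}e^{t\Delta_d}((-\Delta_d)^\beta f)\|_\infty\le C\,t^{-k+(\alpha-2\beta)/2}$, i.e. $(-\Delta_d)^\beta f\in\Lambda_H^{\alpha-2\beta}$, which by Theorem \ref{caractodos} is the desired Hölder/Zygmund membership.

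The main obstacle I expect is justifying the interchange of the infinite sum defining $e^{t\Delta_d}$ (or $(-\Delta_d)^\beta f$) with the integral in $s$ and with the $t$-derivatives, together with the commutation $(-\Delta_d)^\beta e^{t\Delta_d}=e^{t\Delta_d}(-\Delta_d)^\beta$ at the level of these only-polynomially-bounded sequences, since $f\notin\ell^p$ in general. This is where the kernel estimates of Section \ref{semigroups} (Lemmata \ref{pointbessel}, \ref{heatbounds}, \ref{kernelest}) do the real work: one needs the $\ell^1$-in-space and pointwise decay of $\partial_t^{j}G(t,\cdot)$ to dominate everything uniformly on compact $t$-intervals and near $t=0^+$, and the decay of the fractional kernel to control the tails against the growth $1+|j|^\alpha$. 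A secondary technical point is matching derivative indices cleanly — making sure that after the subordination integral the remaining time-derivative order on $f$ equals exactly $[\alpha/2]+1$ (the index appearing in the definition of $\Lambda_H^\alpha$), which is handled by one application of the semigroup reproducing formula; this is routine once the dominated-convergence bookkeeping is set up. Part (ii) is strictly easier and could be folded into part (i) by noting that for $\beta\in\N$ formula \eqref{positivepower} and the iterated-difference definition agree on $\ell_\beta\supseteq$ the polynomially bounded sequences with exponent $<\alpha$, but stating it separately keeps the hypotheses minimal.
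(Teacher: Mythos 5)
Your strategy for the semigroup estimate is sound and genuinely different from the paper's. For (i) you work with a Balakrishnan-type subordination identity, $(-\Delta_d)^{\beta}e^{t\Delta_d}f=\tfrac{(-1)^{m}}{\Gamma(m-\beta)}\int_0^\infty s^{m-\beta-1}\partial_s^{m}e^{(t+s)\Delta_d}f\,ds$, $m=[\beta]+1$, and a beta-integral; the paper instead works directly from the definition \eqref{positivepower}, writing $(e^{\tau\Delta_d}-I)^{[\beta]+1}f$ as an $\ell$-fold iterated integral of $\partial_\nu^{\ell}e^{\nu\Delta_d}f$ over $[0,\tau]^{\ell}$ with $\ell=[\beta]+1$, taking $m=[\alpha/2-\beta]+1$ outer time derivatives, and splitting the $d\tau/\tau^{1+\beta}$ integral at $\tau=y$; for (ii) the paper simply applies Theorem \ref{derivada} $2\beta$ times, whereas you commute $(-\Delta_d)^{\beta}$ through the semigroup. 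Your index bookkeeping ($k=[(\alpha-2\beta)/2]+1$, $k+\beta>\alpha/2$, $m+k\ge[\alpha/2]+1$ so that Lemma \ref{subirk} applies) and the resulting bound $t^{-k+(\alpha-2\beta)/2}$ are correct, and the interchange/commutation issues you flag are of the same nature as what the paper resolves with Lemma \ref{sequeda} and Fubini. Note, however, that since (i) fixes \eqref{positivepower} as the definition, you still owe a proof that your subordination formula represents that operator on polynomially growing sequences of $\ell_\beta$; this identification is not automatic and is the role played in the paper by Lemma \ref{lbeta} (kernel computation via \eqref{fourier}).

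The genuine gap is the growth condition, which is part of membership in $\Lambda_H^{\alpha-2\beta}$. In (ii), the claim that $(-\Delta_d)^{\beta}$, being a difference operator of order $2\beta$, ``lowers polynomial growth by $2\beta$'' is false for general sequences: $f(n)=(-1)^{n}(1+|n|^{\alpha})$ has growth exactly $\alpha$ and so does $\Delta_d f$. The correct statement uses the regularity of $f$, not its size, and in the paper it is exactly the growth half of Theorem \ref{derivada} (the estimates of $A$ and $B$ there), which is the laborious part; invoking Theorem \ref{derivada} $2\beta$ times yields all of (ii) at once. In (i), ``size estimate plus kernel decay'' cannot produce the exponent $\alpha-2\beta$: with $|f(j)|\le C(1+|j|^{\alpha})$ and $|K_{\beta}(j)|\le C(1+|j|)^{-1-2\beta}$, the near part $|n-j|\le 2|n|$ only gives $C(1+|n|^{\alpha})$, and the far part does not even converge from the size bound alone when $\alpha>2\beta$ (there you need $f\in\ell_\beta$); improving the exponent to $\alpha-2\beta$ requires the cancellation $\sum_{j}K_{\beta}(j)=0$ together with the H\"older/Zygmund regularity of $f$, i.e.\ an argument in the spirit of Theorem \ref{teoSchau}, as the paper indicates. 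Finally, your closing remark that (ii) folds into (i) because polynomially bounded sequences of exponent $<\alpha$ lie in $\ell_\beta$ is incorrect: such sequences lie in $\ell_\beta$ only when the exponent is $<2\beta$, and indeed $\Lambda_H^{\alpha}\not\subset\ell_\beta$ when $\alpha>2\beta$ (e.g.\ $f(n)=n^{2}\in\Lambda_H^{3}\setminus\ell_{1}$), which is precisely why (ii) is stated with the iterated-difference definition and without the $\ell_\beta$ hypothesis.
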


It is usual to consider the natural extension of the discrete H\"older classes to the mesh of step $h>0.$ Let $h>0,$ $\Z_h:=\{nh\,:\,n\in\Z\},$ $f:\Z_h\to\R.$ One can define $$\delta_{right}f(nh):=\frac{f(nh)-f((n+1)h)}{h},\quad \delta_{left}f(nh):=\frac{f(nh)-f((n-1)h)}{h}.$$ For $\alpha=k+\beta>0$, where $k\in\N_0:=\N\cup\{0\}$ and $0<\beta<1,$ we say that 
$f\in C^{\alpha}(\Z_h)$ if there is $C>0$ such that for each $l,s\in\N_0$ with $l+s=k$
$$
\sup_{n\neq m}\frac{|\delta_{right/left}^{l,s}f(nh)-\delta_{right/left}^{l,s}f(mh)|}{h^{\beta}|n-m|^{\beta}}\leq C<\infty.
$$
All our results also hold for the spaces  $C^\alpha(\Z_h)$, $h>0,$ and the  spaces defined through the heat and Poisson semigroups  associated to the discrete Laplacian  $$\Delta_df(nh):=\frac{f((n+1)h)-2f(nh)+f((n-1)h)}{h^2}.$$ However, for simplicity we present the results when $h=1.$ Moreover, doing a tedious work component to component, one can repeat the results in the multidimensional case $\Z_h^N.$

   The paper is organized as follows. 
In Section \ref{semigroups} we prove all the results concerning pointwise and norm estimates of the discrete heat and Poisson kernels and semigroups.  In Section \ref{caractodo} we prove Theorem \ref{caractodos} and all the properties related with these spaces. Finally, in Section \ref{applications} we prove the results regarding the applications, Theorems \ref{Besselpot}, \ref{teoSchau} and \ref{teoHolder}.\\

Throughout this article $C$ and $c$ always denote positive constants that can change in each occurrence.

\section{Discrete Gaussian  and Poisson semigroups}\label{semigroups}

\subsection{Bessel functions}\label{appendix}
\setcounter{theorem}{0}
\setcounter{equation}{0}
\subsubsection{Some known results}\textcolor{white}{}\newline
Along the paper, next estimates for the Euler's gamma function  will be applied in some results. Recall that for every $\alpha,z\in\C,$  $$\frac{\Gamma(z+\alpha)}{\Gamma(z)}=z^{\alpha}(1+\frac{\alpha(\alpha+1)}{2z}+O(|z|^{-2})),\quad |z|\to\infty,$$ whenever $z\neq 0,-1,-2,\ldots$ and $z\neq -\alpha,-\alpha-1,\ldots,$ see \cite[Eq.(1)]{ET}. In particular,
\begin{equation*}\label{double3}
\frac{\Gamma(z+\alpha)}{\Gamma(z)}=z^{\alpha}\left(1+O\left({\frac{1}{|z|}}\right)\right), \quad z\in\C_+,\,\Real\alpha>0.
\end{equation*}

We denote by $I_n$ the modified Bessel function of the first kind and order $n\in\Z,$ given by \begin{equation*}\label{DefBessel}I_n(t)=\sum_{m=0}^{\infty}\frac{1}{m!\Gamma(m+n+1)}\biggl(\frac{t}{2}\biggr)^{2m+n},\quad n\in\N_0,\, t\in\C,\end{equation*} and $I_{-n}=I_n$ for $n\in\N.$

Now we give some known properties about Bessel functions $I_n$  which can be found in \cite[Chapter 5]{Leb} and \cite{W}, and we will use along the paper. They satisfy that $I_0(0)=1$, $I_n(0)=0$ for $n\not =0,$ and $I_n(t)\geq 0$ for  $n\in\Z$ and $t\geq0$. Also, the function $I_n$ has the semigroup property (also called Neumann's identity) for the convolution on $\Z,$ that is, \begin{equation*}\label{Semigroup}I_n(t+s)=\sum_{m\in\Z}I_m(t)I_{n-m}(s)=\sum_{m\in\Z}I_m(t)I_{m-n}(s),\quad t,s\geq 0,\end{equation*} see \cite[Chapter II]{Feller}, and it satisfies the following differential-difference equation
\begin{equation}\label{prop3}\frac{\partial}{\partial t}I_n(t)=\frac{1}{2}\biggl(I_{n-1}(t)+I_{n+1}(t)\biggr), \quad t\in \C.\end{equation} Furthermore, for each $n\in\Z$ and $N\in\N_0$ \begin{equation}\label{asymptotic}
I_n(t)=\frac{e^t}{\sqrt{2\pi t}}\biggl(\sum_{k=0}^N\frac{(-1)^k a_{n,k}}{(2t)^k}+O\biggl(\frac{1}{t^{N+1}}\biggr)\biggr), \:\; \:|\arg t|<\pi/2,
\end{equation}
with $a_{n,0}=1$ and for $k\geq 1$ $a_{n,k}=\frac{(4n^2-1)(4n^2-3)\cdots(4n^2-(2k-1)^2)}{k!2^{2k}},$ see \cite[(5.11.10)]{Leb}. The previous big ''o'' function satisfies $\left|O\biggl(\frac{1}{t^{N+1}}\biggr)\right|\leq\frac{C_{n,N}}{t^{N+1}},$ being $C_{n,N}$ a positive constant depending on $n,N.$
In particular, see \cite{Leb}, we have that
\begin{equation}\label{asymptotic2}
I_n(t)=C \frac{e^t}{t^{1/2}}+R_n(t),
\end{equation}
where $|R_n(t)|\le  C_ne^t t^{-3/2}, $ for $t\to\infty.$

The generating function of the Bessel function $I_n$ is given by \begin{equation}\label{generating}e^{\frac{t(x+x^{-1})}{2}}=\sum_{n\in\Z}x^n I_n(t), \qquad x\neq 0, \,t\in \C.\end{equation} 
Observe that if we differentiate once, twice and four times with respect to the variable $x$ on the generating function \eqref{generating} and evaluating at $x=1$ we get \begin{equation}\label{der1}0=\sum_{n\in\Z}n I_{n}(t), \end{equation} \begin{equation}\label{der2}e^{t}t=\sum_{n\in\Z}n(n-1)I_{n}(t),\end{equation}
\begin{equation*}\label{der4}e^{t}(3t^2+12 t)=\sum_{n\in\Z}(n+2)(n+1)n(n-1)I_{n+2}(t).
\end{equation*}
Moreover, from \eqref{der1} and \eqref{der2} it follows \begin{equation*}\label{der1-2}
e^{t}t=\sum_{n\in\Z}n^2 I_{n}(t).
\end{equation*}
In general, it was proved in \cite[Theorem 3.3]{AG-CM} that, for every $k\in\N_0$,
\begin{equation}\label{polyI}
\sum_{n\in\Z}n^{2k}I_n(t)=e^t p_k(t), \qquad \sum_{n\in\Z}n^{2k+1}I_n(t)=0, \quad t>0,
\end{equation}
where each $p_k(t)$ is a polynomial of degree k with positive coefficients, $p_0(t)=1,$ and $p_k(0)=0$ for all $k\in\N.$

The following identities will be useful to define fractional powers of the discrete Laplacian,  \begin{equation}\label{IntFractBessel}\int_{0}^{\infty}\frac{e^{-ct}I_n(ct)}{t^{\gamma+1}}\,dt=\frac{(2c)^{\gamma}}{\sqrt{\pi}}\frac{\Gamma(1/2+\gamma)\Gamma(n-\gamma)}{\Gamma(n+1+\gamma)},\quad c>0,\, -1/2<\gamma<n,\end{equation} see \cite[Section 2.15.3, formula 3, p.305]{PBM2}, and \begin{equation}\label{fourier}
I_n(t)=\frac{e^t}{2\pi}\int_{-\pi}^{\pi}e^{-i n\theta}e^{-2t\sin^2\theta/2}\,d\theta,    
\end{equation}
see \cite[Proof of Proposition 1]{CGRTV}.
\vspace{0.5cm}
\subsubsection{A new important property of Bessel functions}\textcolor{white}{}\vspace{0.3cm}\newline
The Bessel function $I_n$ has the following useful integral representation \begin{equation}\label{prop4}I_n(t)=\frac{t^n}{\sqrt{\pi}2^n\Gamma(n+1/2)}\int_{-1}^1 e^{-ts}(1-s^2)^{n-1/2}\,ds,\quad n\in\N_0,\, t\geq 0,\end{equation} that we generalize in the following lemma.

\begin{lemma}\label{Lemma3.1} Let $n\in\N_0.$ Then for all $j\in\N$ such that $n-j\in\N_0$ one can write \begin{equation}\label{Iterada}I_n(t)=\frac{(-1)^j t^{n-j}}{\sqrt{\pi}2^{n-j}\Gamma(n+1/2-j)}\int_{-1}^1 e^{-ts}s\frac{Q_{j-1}(s,t)}{t^{j-1}}(1-s^2)^{n-1/2-j}\,ds,\end{equation} with $Q_{j-1}(s,t)=\sum_{k=0}^{j-1}c_{j-1-k,j-1}(st)^k.$ \end{lemma}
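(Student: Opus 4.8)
The natural approach is induction on $j$, starting from the known representation \eqref{prop4}, which is precisely the case $j=0$ (with $Q_{-1}$ interpreted as the constant $1$, or more cleanly, taking $j=1$ as the base case after one integration by parts). The single computational engine is integration by parts in the variable $s$ inside the integral representation, exploiting the identity $\frac{d}{ds}(1-s^2)^{n-1/2-j} = -2s(n-1/2-j)(1-s^2)^{n-3/2-j}$ to lower the Bessel index by one unit each time, while the factor $e^{-ts}$ differentiates to $-t\,e^{-ts}$, which is what produces the extra power of $t$ in the denominator and shifts the polynomial $Q_{j-1}$ to $Q_j$.

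First I would fix $n$ and set up the induction hypothesis: assume \eqref{Iterada} holds for some $j$ with $n-j\in\N_0$, $n-j\geq 1$ (so that the exponent $n-1/2-j > -1/2$ and the integral converges). Then I would integrate by parts in the integral
$$
\int_{-1}^1 e^{-ts}\, s\, Q_{j-1}(s,t)\,(1-s^2)^{n-1/2-j}\,ds,
$$
choosing the part to differentiate so as to hit $(1-s^2)^{n-1/2-j}$ — that is, write $(1-s^2)^{n-1/2-j} = \frac{1}{-2(n-1/2-j)} \cdot \frac{1}{s}\frac{d}{ds}(1-s^2)^{n-1/2-j+1}$ — being careful about the apparent singularity at $s=0$; it is cleaner to antidifferentiate $s(1-s^2)^{n-1/2-j}$ directly, getting $\frac{-1}{2(n+1/2-j)}(1-s^2)^{n+1/2-j}$, and to differentiate $e^{-ts}Q_{j-1}(s,t)$. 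The boundary terms vanish because $n+1/2-j \geq 3/2 > 0$ makes $(1-s^2)^{n+1/2-j}$ vanish at $s=\pm 1$. What remains is an integral of $(1-s^2)^{n+1/2-j} = (1-s^2)^{n-1/2-(j-1)}$ against $\frac{d}{ds}\bigl(e^{-ts}Q_{j-1}(s,t)\bigr) = e^{-ts}\bigl(-t\,Q_{j-1}(s,t) + \partial_s Q_{j-1}(s,t)\bigr)$. Combining the prefactor arithmetic — the relation $\Gamma(n+1/2-(j-1)) = (n+1/2-j)\,\Gamma(n+1/2-j)$ absorbs exactly the constant $-2(n+1/2-j)$ produced by the integration by parts, up to sign and powers of $2$ and $t$ — reproduces \eqref{Iterada} with $j$ replaced by $j-1$... so in fact the induction should be run in the direction of \emph{decreasing} $j$, starting from the largest admissible value and descending, or equivalently I should phrase it as: \eqref{prop4} is the $j=0$ case, and one integration by parts sends the formula for index-shift $j$ to the formula for index-shift $j+1$, with $Q_j(s,t) = t\,Q_{j-1}(s,t) - \partial_s Q_{j-1}(s,t)$ up to normalization, which is a polynomial recursion that manifestly preserves the stated form $Q_{j-1}(s,t)=\sum_{k=0}^{j-1}c_{j-1-k,j-1}(st)^k$: multiplication by $t$ and differentiation in $s$ each map such polynomials to polynomials of the same shape with degree increased by one, and one checks the coefficient indexing matches.

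The main obstacle — really the only subtle point — is bookkeeping: verifying that the degree of $Q_j$ grows correctly (each step multiplies by $t$, raising the $t$-degree by one and hence the combined $(st)$-degree by one), that the coefficients genuinely depend only on $j$ and the combined index $k$ as written (not on $n$), and that the powers of $t$ in the denominator ($t^{j-1}$ inside, times the external $t^{n-j}$) track through each integration by parts. I would verify the recursion $Q_j(s,t) = t\,Q_{j-1}(s,t) - \partial_s Q_{j-1}(s,t)$ (or whatever exact form emerges after pinning down constants) explicitly for $j=1$ against \eqref{prop4} — there a single integration by parts of \eqref{prop4} gives $I_n(t) = \frac{(-1)t^{n-1}}{\sqrt\pi 2^{n-1}\Gamma(n-1/2)}\int_{-1}^1 e^{-ts}\,s\,(1-s^2)^{n-3/2}\,ds$, matching \eqref{Iterada} with $Q_0(s,t) = c_{0,0} = 1$ — and then trust the inductive step, noting that the convergence condition $n-1/2-j > -1/2$, i.e. $n-j \in \N_0$ with $n - j \geq 1$ for the integral, and the case $n-j=0$ handled as a limiting/terminal case, are exactly the hypotheses stated. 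No deep idea is needed beyond repeated integration by parts; the content is entirely in the careful tracking of constants, and I would present the computation for the generic inductive step once and cleanly rather than iterating.
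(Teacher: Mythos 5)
Your overall plan coincides with the paper's: start from \eqref{prop4}, integrate by parts repeatedly, let the identity $\Gamma(n+1/2-(j-1))=(n+1/2-j)\Gamma(n+1/2-j)$ absorb the constants, and check the base case $j=1$ (your explicit computation of that case is correct). But the execution has a genuine gap in the inductive step, and it concerns precisely the part of the statement that carries content: the polynomials $Q_j$. The integration by parts you actually set up (antidifferentiate $s(1-s^2)^{n-1/2-j}$, differentiate $e^{-ts}Q_{j-1}(s,t)$) \emph{raises} the exponent of $(1-s^2)$, i.e. it passes from the formula for $j$ to the formula for $j-1$; it cannot ``send the formula for index-shift $j$ to the formula for index-shift $j+1$'' as you assert when you try to repair the direction, and ``starting from the largest admissible value and descending'' is vacuous because the formula at the top index is exactly what is unknown. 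To climb from $j$ to $j+1$ you must do the opposite split — differentiate $(1-s^2)^{n-1/2-j}$ and antidifferentiate $e^{-ts}sQ_{j-1}(s,t)$ — and this is where the definition of $Q_j$ really comes from: it is forced to be $Q_j(s,t)=-t^2e^{ts}\bigl(\int e^{-wt}wQ_{j-1}(w,t)\,dw\bigr)\big|_s$ as in \eqref{Pj}, i.e. an antiderivative with integrating factor, not the algebraic expression you propose. (Your downward identity could in principle be chained from $A_j$ to $A_{j-1}$ down to \eqref{prop4}, but only after the $Q_j$ have been defined and shown to satisfy $tQ_j-\partial_sQ_j=t^2sQ_{j-1}$, which your proposal never establishes.)

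Concretely, the recursion you write, $Q_j=tQ_{j-1}-\partial_sQ_{j-1}$ ``up to normalization,'' is false: it would give $Q_1=t$, whereas $Q_1(s,t)=st+1$ and $Q_2(s,t)=s^2t^2+3st+3$; the correct relation is the one recorded in Remark \ref{Remark1}, $tQ_j-\partial_sQ_j=t^2sQ_{j-1}$, which determines $Q_j$ from $Q_{j-1}$ only by solving a first-order ODE (equivalently, computing the antiderivative above). Likewise, your claim that the shape $Q_{j-1}(s,t)=\sum_k c_{j-1-k,j-1}(st)^k$ is ``manifestly'' preserved because multiplication by $t$ and differentiation in $s$ preserve it is incorrect: neither operation maps polynomials in the product $st$ to polynomials in $st$ (e.g. $t\cdot(st)^k=s^kt^{k+1}$). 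That the $Q_j$ are genuinely polynomials in $st$, together with the coefficient law for the $c_{k,j}$, is not bookkeeping one can defer: it is part of the lemma's statement and is used downstream (the explicit $c_{k,j}$ of \eqref{general} enter the proof of Lemma \ref{heatbounds}). The paper obtains it by computing the antiderivative $\int e^{-wt}w^k\,dw$ in closed form in \eqref{P}, which is the step your proposal is missing; without it, the inductive step and the identification of the $Q_j$'s do not close.
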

\begin{proof}
By \eqref{prop4} it follows easily integrating by parts that $\eqref{Iterada}$ holds for $j=1$ and $Q_0(s,t)=1$, where we have differentiated $(1-s^2)^{n-1/2}$ and integrated $e^{-st}.$ Doing the same procedure, differentiating  $(1-s^2)^{n-1/2-j}$, integrating $e^{-st}sQ_{j-1}(s,t)$ and denoting
\begin{equation}\label{Pj}
Q_j(s,t):=-t^2e^{ts}\biggl( \int e^{-wt}w Q_{j-1}(w,t)\,dw\biggr)_{\Big|_s}
\end{equation}
 one gets $Q_1(s,t)=st+1,Q_2(s,t)=s^2t^2+3st+3,$ which satisfy \eqref{Iterada} for $j=2,3.$ 

Thus by iterating the previous arguments we get, for $j\geq 3$, $n-(j+1)\in\N_0$, that
\begin{small}
\begin{align*}
I_n(t)&=\frac{(-1)^j t^{n-(j+1)}}{\sqrt{\pi}2^{n-(j+1)}\Gamma(n-1/2-j)}\int_{-1}^1 \frac{s}{t^{j-2}}\biggl( \int e^{-wt}w Q_{j-1}(w,t)\,dw\biggr)_{\Big|_s} (1-s^2)^{n-1/2-(j+1)}\,ds\\ \\
&=\frac{(-1)^{j+1} t^{n-(j+1)}}{\sqrt{\pi}2^{n-(j+1)}\Gamma(n-1/2-j)}\int_{-1}^1 \frac{e^{-ts}s}{t^{j}}(-t^2e^{ts})\biggl( \int e^{-wt}w Q_{j-1}(w,t)\,dw\biggr)_{\Big|_s} (1-s^2)^{n-1/2-(j+1)}\,ds\\
&=\frac{(-1)^{j+1} t^{n-(j+1)}}{\sqrt{\pi}2^{n-(j+1)}\Gamma(n-1/2-j)}\int_{-1}^1 \frac{e^{-ts}s}{t^{j}}Q_j(s,t) (1-s^2)^{n-1/2-(j+1)}\,ds.
\end{align*}
\end{small}
Moreover, if for some $j\in\N$ we can write  $Q_{j-1}(s,t):=\sum_{k=0}^{j-1}c_{j-1-k,j-1}(st)^k$ for certain coefficients $c_{j-1-k,j-1}\ge 0$, then by \cite[Section 1.3.2, formula 6, p.137]{PBM}  it follows that 
\begin{align}\label{P}
\nonumber Q_j(s,t)&=-e^{st}\sum_{k=1}^jc_{j-k,j-1}t^{k+1} \biggl(\int e^{-wt}w^{k} \,dw\biggr)_{\Big|_s}\\
\nonumber &=\sum_{k=1}^jc_{j-k,j-1}k!\sum_{m=0}^k \frac{(st)^{k-m}}{(k-m)!}=\sum_{k=1}^jc_{j-k,j-1}k!\sum_{m=0}^k \frac{(st)^{m}}{m!}\\
&=\sum_{k=1}^{j}c_{j-k,j-1} k!+\sum_{m=1}^j \frac{(st)^{m}}{m!}\sum_{k=m}^{j}c_{j-k,j-1} k!=\sum_{m=0}^j\left( \sum_{k=\max\{1,m\}}^j \frac{k!}{m!}c_{j-k,j-1}\right) (sz)^m\nonumber\\
&:=\sum_{m=0}^jc_{j-m,j} (sz)^m,
\end{align}
and the proof is over.
\end{proof}

\begin{remark}\label{Remark1} 
Note that, by \eqref{P}, if $Q_{j}(s,t)=\sum_{k=0}^{j}c_{j-k,j}(st)^k,$ being \newline $\displaystyle c_{j-k,j}=\sum_{m=\max\{1,k\}}^j \frac{m!}{k!}c_{j-m,j-1},  $ it follows that $c_{0,j}=\frac{j!}{j!}c_{0,j-1}=\frac{(j-1)!}{(j-1)!}c_{0,j-2}=\ldots=c_{0,0}=1,$ and also $c_{j,j}=c_{j-1,j},$ for all $j\in\N.$

Also, note that since \eqref{Pj} holds, then $$\frac{tQ_j(s,t)-\frac{d}{ds}Q_j(s,t)}{t^2}=sQ_{j-1}(s,t),$$ and therefore a few calculations give \begin{equation}\label{recu}
c_{k,j}=c_{k,j-1}+c_{k-1,j}(j-k+1),\quad k=1,\ldots,j-1.
\end{equation}

Note that by the recurrence formula \eqref{recu}, one gets  $$c_{1,j}=c_{1,j-1}+jc_{0,j}=c_{1,j-1}+j=c_{1,j-2}+(j-1)+j=\ldots=c_{0,1}+2+\ldots+j=\frac{j(j+1)}{2},$$ and \begin{eqnarray*}
c_{2,j}&=&c_{2,j-1}+(j-1)c_{1,j}=c_{2,j-1}+\frac{(j-1)j(j+1)}{2}=\ldots\\
&=&c_{2,2}+\frac{2\cdot3\cdot4}{2}+\ldots+\frac{(j-1)j(j+1)}{2}=\frac{1}{2\cdot4}(j-1)j(j+1)(j+2),
\end{eqnarray*} where we have applied $c_{2,2}=c_{1,2}=\frac{2\cdot3}{2}.$ In general, it follows by induction that 
\begin{equation}\label{general}  
c_{k,j}=\frac{1}{\prod_{v=1}^{k}(2v)}(j-k+1)\cdots(j+k),\quad k=1,\ldots, j-1.
\end{equation}

\end{remark}

\subsection{Discrete Heat kernel}

As we have said, $G(t,n)=e^{-2t}I_n(2t)$ is the fundamental solution of the heat problem on $\Z$, \eqref{eq1} (it is a straightforward consequence of \eqref{prop3}). In the following we present some key properties for this heat kernel.

From the theory of Confluent Hypergeometric Functions, see \cite[Section 9.11]{Leb} we have \begin{equation}\label{Hyper}
\int_0^1 e^{-4ts} s^{\gamma-\alpha-1}(1-s)^{\alpha-1}\,ds=\Gamma(\gamma-\alpha)e^{-4t}\sum_{k=0}^{\infty}\frac{(4t)^k}{k!}\frac{\Gamma(\alpha+k)}{\Gamma(\gamma+k)},\quad \Real \gamma>\Real \alpha>0
\end{equation} 
and therefore by \eqref{prop4} and a change of variable, one gets,  for $n\in\N_0,$ and  $t\geq 0$, 
\begin{align}\label{Bess}
G(t,n)&=\frac{t^n 4^n}{\sqrt{\pi}\Gamma(n+1/2)}\int_0^1 e^{-4ts}s^{n-1/2}(1-s)^{n-1/2}\,ds\nonumber\\&=\frac{1}{\sqrt{4\pi t}\Gamma(n+1/2)}\int_0^{4t} e^{-u}u^{n-1/2}\left(1-\frac{u}{4t}\right)^{n-1/2}\,du 
=\frac{t^n4^n}{\sqrt{\pi}}e^{-4t}\sum_{k=0}^{\infty}\frac{(4t)^k}{k!}\frac{\Gamma(n+1/2+k)}{\Gamma(2n+1+k)}.
\end{align}
\\
In the following two lemmata we prove new pointwise estimates for the difference of any order of $G(t,n).$

\begin{lemma}\label{pointbessel}Let $l\in\N_0,$ and $n\in\Z,$ then $$|\delta_{right}^lG(t,n)|\leq \frac{C_n}{t^{[(l+1)/2]+1/2}},\quad t>0.$$
\end{lemma}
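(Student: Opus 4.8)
The statement to prove is that for every $l\in\N_0$ and $n\in\Z$, we have $|\delta_{right}^l G(t,n)|\le C_n t^{-[(l+1)/2]-1/2}$ for $t>0$. My plan is to reduce everything to the integral representation of the Bessel function and the new iterated formula of Lemma \ref{Lemma3.1}. First I would observe that $\delta_{right}$ acting on $G(t,\cdot)=e^{-2t}I_{\cdot}(2t)$ produces, up to the factor $e^{-2t}$, a finite linear combination of shifts $I_{n+r}(2t)$ with binomial coefficients, so $|\delta_{right}^l G(t,n)|\le e^{-2t}\sum_{r=0}^{l}\binom{l}{r}I_{n+r}(2t)$; hence it suffices to bound each $e^{-2t}I_m(2t)$ and, more sharply, to exploit cancellation inside the alternating sum. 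The point of the theorem is that the decay exponent $[(l+1)/2]+1/2$ improves as $l$ grows (roughly $l/2$ extra powers of $t^{-1}$), which \emph{cannot} come from bounding each term separately — each $G(t,m)$ only decays like $t^{-1/2}$ — so the cancellation among the shifts must be used.

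The key tool for extracting that cancellation is the integral representation. Writing $e^{-2t}I_m(2t)$ via \eqref{prop4} (for $m\ge 0$; use $I_{-m}=I_m$ otherwise), one has $G(t,m)=\frac{t^m 4^m}{\sqrt\pi\,\Gamma(m+1/2)}\int_{-1}^1 e^{-2t(1+s)}(1-s^2)^{m-1/2}\,ds$ after absorbing $e^{-2t}$; the discrete difference $\delta_{right}$ in the index $m$ then becomes, under the integral sign, multiplication by an elementary factor (essentially a power of $(1-s)/2$ times a shift in the Gamma prefactor), so that $\delta_{right}^l$ corresponds to integrating $e^{-2t(1+s)}$ against $(1-s^2)^{n'-1/2}$ for an index shifted down by roughly $l$, i.e. precisely the situation of Lemma \ref{Lemma3.1}. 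Using \eqref{Iterada} with $j=[(l+1)/2]$ (or the appropriate value matching the claimed exponent) one rewrites the relevant quantity as $\frac{(-1)^j t^{n-j}}{\sqrt\pi\,2^{n-j}\Gamma(n+1/2-j)}\int_{-1}^1 e^{-ts}\,s\,\frac{Q_{j-1}(s,t)}{t^{j-1}}(1-s^2)^{n-1/2-j}\,ds$, and then the decay is read off by the standard Laplace-type estimate: the factor $e^{-ts}$ (after the shift $u=t(1+s)$) contributes $t^{-1/2}$ from the region near $s=-1$, $Q_{j-1}(s,t)/t^{j-1}$ is $O(1)$ in $t$ uniformly on $[-1,1]$ once divided by $t^{j-1}$ (its top-degree term is $(st)^{j-1}/t^{j-1}=s^{j-1}$), and the prefactor $t^{n-j}/t^{j-1}$ against the $t^{n}4^n$ one would otherwise carry gives a net $t^{-j}$; combined with the $t^{-1/2}$ this yields $t^{-j-1/2}=t^{-[(l+1)/2]-1/2}$.

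The main obstacle, and the part requiring genuine care rather than routine computation, is the bookkeeping that converts ``$\delta_{right}^l$ in the discrete index'' into ``lower the Bessel index by the right amount and pick up the $Q_{j-1}$ polynomial.'' Concretely one must check that iterating the integration-by-parts identity \eqref{Pj}–\eqref{P} exactly matches the alternating binomial sum coming from $\delta_{right}^l$, and track that the $\Gamma$-prefactor $\Gamma(n+1/2-j)$ stays finite (this forces constraints on which $j$ we may use relative to $n$, and is why the constant $C_n$ depends on $n$; for small $|n|$ one first shifts by finitely many applications of $\delta_{right}$ to reach a large enough index, which only changes $C_n$). A secondary technical point is handling the parity: $\delta_{right}^l$ with $l$ even versus odd lands on a slightly different configuration, which is exactly the reason the exponent is $[(l+1)/2]+1/2$ rather than $l/2+1/2$, and one must verify the odd case does not gain the extra half-power. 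Once the algebraic identification with Lemma \ref{Lemma3.1} is in place, the remaining estimate $\big|\int_{-1}^1 e^{-ts} s\,Q_{j-1}(s,t)t^{-(j-1)}(1-s^2)^{n-1/2-j}\,ds\big|\le C_n t^{-1/2}$ is a direct consequence of $|Q_{j-1}(s,t)t^{-(j-1)}|\le C$ on $[-1,1]$ together with $\int_{-1}^1 e^{-t(1+s)}\,ds=O(t^{-1})$ near $s=-1$ and boundedness elsewhere — but I would double-check whether one actually gets $t^{-1/2}$ or $t^{-1}$ from this integral, as the stated exponent suggests only a half-power is claimed there, so the precise splitting of the $s$-integral near its endpoints is where I would be most careful.
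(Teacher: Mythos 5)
Your overall plan (integral representation plus Lemma \ref{Lemma3.1}, exploiting cancellation in the alternating sum) points toward the machinery the paper actually uses for the companion Lemma \ref{heatbounds}, but as written it has a genuine gap at the central step, and it is not the paper's proof of Lemma \ref{pointbessel}. Applying \eqref{Iterada} with $j=[(l+1)/2]$ to a single Bessel function, as you propose, cannot produce any decay beyond $t^{-1/2}$: it merely re-expresses $I_n(2t)$, and $G(t,n)=e^{-2t}I_n(2t)$ genuinely is of size $t^{-1/2}$ for fixed $n$, so no choice of $j$ in \eqref{Iterada} can ``read off'' the exponent $[(l+1)/2]+1/2$. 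The gain must come from the sum $\sum_{j=0}^l\binom{l}{j}(-1)^jI_{n+j}(2t)$, and the correct way to use Lemma \ref{Lemma3.1} for that is the one in the proof of Lemma \ref{heatbounds}: apply \eqref{Iterada} to each shifted term $I_{n+j}$ with shift exactly $j$, so that every term carries the common weight $(1-s^2)^{n-1/2}$ and the sign $(-1)^j$ cancels the alternating sign; the $t$-independent part of the resulting integrand then sums to $(1+s)^l$, which vanishes to order $l$ at $s=-1$, exactly where the Laplace integral concentrates, while the correction terms carry explicit factors $t^{-u}$ with lower vanishing order, and the balance between these two effects is what produces $[(l+1)/2]+1/2$. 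Your power count never performs this balance: you assert the $s$-integral contributes a flat $t^{-1/2}$ independently of the vanishing order of the weight at $s=-1$ (in fact $\int_{-1}^1e^{-2t(1+s)}(1+s)^{\nu}\,ds\sim c\,t^{-\nu-1}$, and the $t^{-1/2}$ for $G(t,n)$ itself arises only after combining this with the prefactor $t^{n}$), you double count the factor $t^{-(j-1)}$, and you flag yourself that you are unsure whether the key integral yields $t^{-1/2}$ or $t^{-1}$ — which is precisely the unresolved step. Two further signs of the misapplication: the worry about $\Gamma(n+1/2-j)$ and about needing $|n|$ large only arises because you lower the index of a single $I_n$; and the parity remark is backwards — for odd $l$ the claimed exponent $[(l+1)/2]+1/2=l/2+1$ is \emph{stronger} than $l/2+1/2$, so the odd case must gain the extra half power, and establishing that gain is part of what has to be proved.

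For comparison, the paper's proof of Lemma \ref{pointbessel} is shorter and purely based on the asymptotic expansion \eqref{asymptotic}: take $N=[(l+1)/2]$, insert the expansion of each $I_{n+j}(2t)$ into the binomial sum, and use that $a_{n+j,k}$ is a polynomial of degree $2k$ in $j$, so the $l$-th finite difference annihilates every term with $2k<l$; the first surviving term is $k=N$, giving $C_n t^{-1/2-[(l+1)/2]}$ plus a smaller error (the range $t\le 1$ is trivial since $G$ is bounded), and this works for all $n\in\Z$ at once since $a_{n,k}$ depends only on $n^2$. If you prefer to salvage your route, the cleanest repair is to first prove the estimate of Lemma \ref{heatbounds} (which is where the $(1+s)^l$ cancellation is carried out correctly) and then, for fixed $n\in\N_0$ and $t>1$, bound $G(t,n+l-2u)\le Ct^{-1/2}$: the worst terms $u=[l/2]$ and $u=[l/2]+1$ then give exactly $t^{-[(l+1)/2]-1/2}$; but you would still need the reflection $I_{-m}=I_m$ (and some extra care when $-l<n<0$) to cover negative $n$, which the asymptotic-expansion argument handles automatically.
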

\begin{proof}
Note that for $l=0$ the result follows by \eqref{asymptotic} taking $N=0$. If $l\in\N,$ take $N=[(l+1)/2],$  then \begin{eqnarray*}\delta_{right}^lG(n,t)&=&\frac{1}{2\sqrt{\pi t}}\sum_{j=0}^l\binom{l}{j}(-1)^j\biggl(\sum_{k=0}^{N}\frac{(-1)^k a_{n+j,k}}{(4t)^k}+O\biggl(\frac{1}{t^{N+1}}\biggr)\biggr)\\
&=&\frac{1}{2\sqrt{\pi t}}\sum_{k=0}^{N}\frac{(-1)^k}{(4t)^k}\sum_{j=0}^l\binom{l}{j}(-1)^ja_{n+j,k}+O\biggl(\frac{1}{t^{N+3/2}}\biggr).
\end{eqnarray*}
Note that for $k=0,\ldots,N$, 
$a_{n+j,k}$ is a polynomial in $j$ of order $2k,$ so we can write $a_{n+j,k}=\sum_{p=0}^{2k}\gamma_{p,n,k}\;j(j-1)\cdots(j-p+1),$ being $\gamma_{p,n,k}$ real coefficients (for $p=0$ we have the constant term $\gamma_{0,n,k}$). Then 
\begin{eqnarray*}\sum_{j=0}^l\binom{l}{j}(-1)^j a_{n+j,k}&=& \sum_{j=0}^l\binom{l}{j}(-1)^j\sum_{p=0}^{\min\{2k,j\}}\gamma_{p,n,k}j(j-1)\cdots(j-p+1) \\
&=&\sum_{p=0}^{\min\{2k,l\}}\gamma_{p,n,k}\sum_{j=p}^{l}\binom{l}{j}(-1)^j j(j-1)\cdots(j-p+1)\\
&=&\sum_{p=0}^{\min\{2k,l\}}\beta_{p,n,k,l}\sum_{j=0}^{l-p}\binom{l-p}{j}(-1)^j,
\end{eqnarray*}
with $\beta_{p,n,k,l}$ real coefficients. Since $k=0,\ldots, N,$ with $N=[(l+1)/2],$ it is not difficult to see that previous expression is null whenever $k=0,\ldots, N-1,$ and it is not null when $k=N.$ Therefore  $$\delta_{right}^lG(n,t)=\frac{C_n}{t^{1/2+[(l+1)/2]}}+O\biggl(\frac{1}{t^{3/2+[(l+1)/2]}}\biggr),$$ and the result follows.
\end{proof}

\begin{lemma}\label{heatbounds} Let $l\in\N,$ and $n\in\N_0.$ Then $$|\delta_{right}^lG(t,n)|\leq\frac{C_{l}}{t^{l/2}}\sum_{u=0}^{[l/2]}\biggl(\frac{(n+1/2)^{2}}{t}\biggr)^{l/2-u}G(t,n+l-2u)+C_{l}G(t,n)\sum_{u=[l/2]+1}^{l-1}\frac{1}{t^u},$$ being $C_l$ a positive constant which is independent on $t$ and $n.$
\end{lemma}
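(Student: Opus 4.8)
My plan is to reduce the bound to an estimate for $\sum_{j=0}^l(-1)^j\binom{l}{j}I_{n+j}(2t)$ (since $\delta_{right}^l G(t,n)=\sum_{j=0}^l(-1)^j\binom{l}{j}G(t,n+j)$ and $G(t,m)=e^{-2t}I_m(2t)$) and then argue separately according to the size of $n$ relative to $\sqrt t$ and to $t$. I will use freely: (i) the monotonicity $0\le G(t,m')\le G(t,m)$ for $m'\ge m\ge 0$; (ii) the representation \eqref{Bess}, $G(t,m)=\frac{(4t)^{m}}{\sqrt\pi\,\Gamma(m+1/2)}\int_0^1 e^{-4tr}\big(r(1-r)\big)^{m-1/2}dr$, together with its consequence $\frac{(4t)^n}{\sqrt\pi\,\Gamma(n+1/2)}\int_0^1 e^{-4tr}\big(r(1-r)\big)^{n-1/2+k}dr=\frac{(n+\tfrac12)(n+\tfrac32)\cdots(n+k-\tfrac12)}{(4t)^k}\,G(t,n+k)$; and (iii) the three-term recurrence $I_m(2t)-I_{m+2}(2t)=\frac{m+1}{t}I_{m+1}(2t)$, which with $0\le I_{m+1}(2t)\le I_m(2t)$ gives $\frac{m+1}{t}\le \frac{I_m(2t)}{I_{m+1}(2t)}\le \frac{m+1}{t}+1$, hence $\frac{I_m(2t)}{I_{m+1}(2t)}\asymp\frac{m}{t}$ for $m\gtrsim t$ and so $\frac{G(t,n)}{G(t,n+l)}\asymp\big(\frac{n}{t}\big)^l$ for $n\gtrsim t$.

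For $n\gtrsim t$ the bound is crude: $|\delta_{right}^l G(t,n)|\le\sum_{j=0}^l\binom{l}{j}G(t,n+j)\le 2^l G(t,n)$, and by (iii) the very first term on the right-hand side, $C_l\frac{(n+1/2)^l}{t^l}G(t,n+l)$, is comparable to $G(t,n)$, so the whole right-hand side already dominates. For $n^2\lesssim t$ (so that $G(t,n+m)\asymp t^{-1/2}$ for $|m|\le l$) I would use the asymptotic expansion \eqref{asymptotic}: inserting $I_{n+j}(2t)=\frac{e^{2t}}{\sqrt{4\pi t}}\big(\sum_{k=0}^{N}\frac{(-1)^k a_{n+j,k}}{(4t)^k}+O(t^{-N-1})\big)$ and taking the $l$-th forward difference in $j$, the contributions with $2k<l$ vanish because $j\mapsto a_{n+j,k}$ is a polynomial of degree $2k$, while for $2k\ge l$ the difference is a polynomial $P_k(n)$ of degree $2k-l$; summing back, $\delta_{right}^l G(t,n)=\frac{1}{\sqrt{4\pi t}}\sum_{\lceil l/2\rceil\le k\le N}\frac{(-1)^k P_k(n)}{(4t)^k}+O\!\big(t^{-N-3/2}(1+|n|)^{2N+2}\big)$, which, since successive terms carry a factor $\asymp n^2/t\lesssim 1$ and $G(t,n+m)\asymp t^{-1/2}$, is bounded by $\frac{C_l}{t^{l/2}}\sum_{u=0}^{[l/2]}\big(\frac{(n+1/2)^2}{t}\big)^{l/2-u}G(t,n+l-2u)$ — exactly the first sum in the statement, with the lower-order remainders absorbed in $C_l G(t,n)\sum_{u=[l/2]+1}^{l-1}t^{-u}$ (and $N$ chosen large to kill the final $O(\cdot)$). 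The remaining range $\sqrt t\lesssim n\lesssim t$ is handled through \eqref{Bess}: one checks that the $l$-th difference turns the weight $e^{-4tr}(r(1-r))^{n-1/2}$ into the same weight multiplied by $(2r)^l$ plus correction terms carrying inverse powers of $n+1/2$, and that near the maximum $r\sim n/t$ of that weight the factor $(2r)^l\sim (n/t)^l$ dominates, yielding $|\delta_{right}^l G(t,n)|\lesssim (n/t)^lG(t,n)$, which again is controlled by the first term on the right-hand side.

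The step I expect to be the main obstacle is the intermediate range $n^2\sim t$ up to $n\sim t$: there the asymptotic series of \eqref{asymptotic} converges only marginally (its terms are $\asymp 1$), the crude bound misses the gain $(n/t)^l$, and one must extract by hand the cancellation in $\sum_j(-1)^j\binom lj G(t,n+j)$ — a naive term-by-term estimate overshoots the claimed bound by a power of $|n|$ in this range. Carrying out the integral-representation analysis there (identifying the effective integrand, locating where it changes sign, and matching the estimate to $\frac{(n+1/2)^{l-2u}}{t^{l-u}}G(t,n+l-2u)$) is the delicate, unavoidable computation — and it is exactly the difficulty the paper signals when it notes that the discrete heat kernel lacks the clean Gaussian bounds available in the continuous or graph settings.
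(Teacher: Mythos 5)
Your outer regimes are plausible, and the crude regime $n\gtrsim t$ is genuinely correct: from $I_m(2t)-I_{m+2}(2t)=\frac{m+1}{t}I_{m+1}(2t)$ one does get $G(t,n)\le C_l\bigl(\frac{n+1/2}{t}\bigr)^lG(t,n+l)$ for $n\ge t$, so the trivial bound $|\delta_{right}^lG(t,n)|\le 2^lG(t,n)$ is absorbed by the $u=0$ term there. But the proposal has a genuine gap exactly where you flag it, and that gap is the heart of the lemma. In the range $\sqrt t\lesssim n\lesssim t$ you assert that applying $\delta_{right}^l$ under the integral in \eqref{Bess} ``turns the weight into the same weight multiplied by $(2r)^l$ plus correction terms,'' but you never establish this. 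That statement is precisely the content of the paper's Lemma \ref{Lemma3.1} (the iterated integration by parts producing the polynomials $Q_{j-1}(s,t)$, which lets every $I_{n+j}$, $0\le j\le l$, be written against the single weight $e^{-ts}(1-s^2)^{n-1/2}$) together with the coefficient identities of Remark \ref{Remark1}, which are what allow $1+\sum_{j=1}^l\binom{l}{j}sQ_{j-1}(s,t)t^{1-j}$ to be resummed as $(s+1)^l$ plus corrections carrying inverse powers of $t$ (of $t$, not of $n+1/2$ as you write). Without this identity, or a substitute for it, there is no argument in the critical range — and you say so yourself.

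The regime $n^2\lesssim t$ is also not sound as written. The expansion \eqref{asymptotic} is stated with a remainder $C_{n,N}t^{-N-1}$ whose dependence on $n$ is unspecified, so invoking it for $n$ growing like $\sqrt t$ needs a uniform-in-$n$ remainder bound that is neither in the paper nor proved by you; and even granting your guessed error $O\bigl(t^{-N-3/2}(1+|n|)^{2N+2}\bigr)$, at $n\asymp\sqrt t$ it has size $\asymp t^{-1/2}$, while the target bound there is $\asymp t^{-(l+1)/2}$ — taking $N$ large gains nothing because every term of the series is of the same order once $n^2\asymp t$. So your two incomplete regimes both fail in an overlapping window around $n\asymp\sqrt t$, and the case analysis cannot be closed by the tools you cite. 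By contrast, the paper's proof needs no case splitting at all: Lemma \ref{Lemma3.1}, the resummation into $(s+1)^l+\sum_{u\ge1}t^{-u}(\cdots)$, and then \eqref{Hyper}, \eqref{Bess} with the Gamma-ratio bounds $\frac{\Gamma(n+l-2u+1/2)}{\Gamma(n+1/2)}\le C_l(n+1/2)^{l-2u}$ convert the resulting integrals directly into the terms $\frac{1}{t^{l/2}}\bigl(\frac{(n+1/2)^2}{t}\bigr)^{l/2-u}G(t,n+l-2u)$ and $t^{-u}G(t,n)$, uniformly in $n$ and $t$.
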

\begin{proof}
Let $n\in\N_0,\,t\geq 0.$ Note that by Lemma \ref{Lemma3.1} we have
\begin{eqnarray*}
\delta_{right}^{l}I_n(t)&=&\sum_{j=0}^{l}\binom{l}{j}(-1)^j I_{n+j}(t)\\
&=&\frac{t^{n}}{\sqrt{\pi}2^n\Gamma(n+1/2)}\int_{-1}^1 e^{-ts}(1-s^2)^{n-1/2}\biggl(1+\sum_{j=1}^l\binom{l}{j}\frac{sQ_{j-1}(s,t)}{t^{j-1}}\biggr)\,ds.
\end{eqnarray*}
Now, by Remark \ref{Remark1}, we write \begin{eqnarray*}
1+\sum_{j=1}^l\binom{l}{j}\frac{sQ_{j-1}(s,t)}{t^{j-1}}&=&1+\sum_{j=1}^l\binom{l}{j}\sum_{k=0}^{j-1}c_{j-k-1,j-1}s^{k+1}t^{k+1-j}\\
&=&1+\sum_{j=1}^l\binom{l}{j}\sum_{u=0}^{j-1}c_{u,j-1}\frac{s^{j-u}}{t^u}\\
&=&1+\sum_{j=1}^l\binom{l}{j}s^j+\sum_{u=1}^{l-1}\frac{1}{t^u}\sum_{j=u+1}^l\binom{l}{j}c_{u,j-1}s^{j-u}\\
&=&(s+1)^{l}+\sum_{u=1}^{l-1}\frac{s}{t^u}\sum_{k=0}^{l-u-1}\binom{l}{u+k+1}c_{u,k+u}s^{k}.
\end{eqnarray*}
Observe that $c_{1,k+1}=\frac{(k+1)(k+2)}{2}$ and therefore $$\displaystyle\sum_{k=0}^{l-2}\binom{l}{k+2}c_{1,k+1}s^{k}=l(l-1)\sum_{k=0}^{l-2}\binom{l-2}{k}s^{k}=l(l-1)(1+s)^{l-2}.$$
Now consider the case  $u=2,\ldots,l-1$. Taking into account \eqref{general} we have 
\begin{small}
$$\sum_{k=0}^{l-u-1}\binom{l}{u+k+1}c_{u,k+u}s^{k}=\frac{1}{\prod_{v=1}^{u}(2v)}l(l-1)\cdots(l-u)\sum_{k=0}^{l-u-1}\binom{l-u-1}{k}(k+u+2)\cdots(k+2u)s^k.
$$
\end{small}
Since  $(k+u+2)\cdots(k+2u)$ is a polynomial in $k$ of order $u-1,$ we can write $(k+u+2)\cdots(k+2u)=\sum_{p=0}^{u-1}b_{p,u}k(k-1)\ldots(k-p+1),$ being $b_{p,u}$ real coefficients (for $p=0,$ we have the constant term $b_{0,u}$). Then,
\begin{small}\begin{displaymath}\begin{array}{l} 
\displaystyle \sum_{k=0}^{l-u-1}\binom{l-u-1}{k}(k+u+2)\cdots(k+2u)s^k=\sum_{k=0}^{l-u-1}\binom{l-u-1}{k}\sum_{p=0}^{\min\{u-1,k\}}b_{p,u}k(k-1)\ldots(k-p+1)s^k\\
\displaystyle=\sum_{p=0}^{\min\{u-1,l-u-1\}}b_{p,u}\sum_{k=p}^{l-u-1}\binom{l-u-1}{k}k(k-1)\ldots(k-p+1)s^k\\
\displaystyle=\sum_{p=0}^{\min\{u-1,l-u-1\}}b_{p,u}(l-u-1)\cdots(l-u-p)\sum_{k=p}^{l-u-1}\binom{l-u-1-p}{k-p}s^k=\sum_{p=1}^{\min\{u,l-u\}}d_{p,u,l}s^{p-1}(s+1)^{l-u-p},
\end{array}\end{displaymath}
\end{small}
with $d_{p,u,l}\in\R.$ Therefore, we have that
\begin{align*}&\delta_{right}^{l}I_n(t)\\&=\frac{t^{n}}{\sqrt{\pi}2^n\Gamma(n+1/2)}\int_{-1}^1 e^{-ts}(1-s^2)^{n-1/2}\biggl((s+1)^l+\sum_{u=1}^{l-1}\frac{1}{t^u}\sum_{p=1}^{\min\{u,l-u\}}d_{p,u,l}s^{p}(s+1)^{l-u-p}\biggr)\,ds.
\end{align*}

Taking into account that $|s|\leq 1$ for $s\in[-1,1],$ by a change of variable we have 
\begin{align*}
|\delta_{right}^{l}G(t,n)|&\leq C_{l}\frac{t^{n}4^n}{\sqrt{\pi}\Gamma(n+1/2)}\int_{0}^1 e^{-4ts}s^{n-1/2}(1-s)^{n-1/2}\biggl(s^l+\sum_{u=1}^{l-1}\frac{1}{t^u}\sum_{p=1}^{\min\{u,l-u\}}s^{l-u-p}\biggr)\,ds\\
&\leq C_{l}\sum_{u=0}^{[l/2]}\frac{t^{n-u}4^n}{\sqrt{\pi}\Gamma(n+1/2)}\int_{0}^1 e^{-4ts}s^{n-1/2+l-2u}(1-s)^{n-1/2}\,ds\\
&+C_{l}\sum_{u=[l/2]+1}^{l-1}\frac{t^{n-u}4^n}{\sqrt{\pi}\Gamma(n+1/2)}\int_{0}^1 e^{-4ts}s^{n-1/2}(1-s)^{n-1/2}\,ds\\
&\leq C_{l}\sum_{u=0}^{[l/2]}\frac{t^{n-u}4^n\Gamma(n+l-2u+1/2)}{\sqrt{\pi}\Gamma(n+1/2)} e^{-4t}\sum_{k=0}^{\infty}\frac{(4t)^k\Gamma(n+1/2+k)}{k!\Gamma(2n+l+1-2u+k)}\\
&\quad+C_{l}G(t,n)\sum_{u=[l/2]+1}^{l-1}\frac{1}{t^u},
\end{align*}
where in the last inequality we have used \eqref{Hyper}. Now take $u=0,\ldots,[l/2],$ and note that $m:=l-2u$ is positive. An easy computation shows that $$\frac{\Gamma(n+1/2+k)}{\Gamma(2n+l+1-2u+k)}\leq C_l\frac{\Gamma(n+m+1/2+k)}{\Gamma(2n+2m+1+k)},$$ for all $n,k\in\N_0.$ Also, $\frac{\Gamma(n+l-2u+1/2)}{\Gamma(n+1/2)}\leq C_{l}(n+1/2)^{m}.$ Then by \eqref{Bess} we have \begin{eqnarray*}
|\delta_{right}^{l}G(t,n)|&\leq&C_{l}\sum_{u=0}^{[l/2]}\frac{t^{n-u}4^n(n+1/2)^{l-2u}}{\sqrt{\pi}} e^{-4t}\sum_{k=0}^{\infty}\frac{(4t)^k\Gamma(n+l-2u+1/2+k)}{k!\Gamma(2(n+l-2u)+1+k)}\\
&&+C_{l}\sum_{u=[l/2]+1}^{l-1}\frac{G(t,n)}{t^u}\\
&\leq&\frac{C_{l}}{t^{l/2}}\sum_{u=0}^{[l/2]}\biggl(\frac{(n+1/2)^{2}}{t}\biggr)^{l/2-u}G(t,n+l-2u)+C_{l}G(t,n)\sum_{u=[l/2]+1}^{l-1}\frac{1}{t^u}.
\end{eqnarray*}

\end{proof}

\begin{remark}\label{remheatbounds}
Let $l,n\in\N$, and $t>1$. From the previous lemma,  if $1\leq n\leq \sqrt{t}-1/2$, then $$|\delta_{right}^lG(t,n)|\leq C\frac{ G(t,n)}{t^{l/2}},$$ and if $n\geq \sqrt{t}-1/2$,  then $$|\delta_{right}^lG(t,n)|\leq C\frac{ G(t,n)(n+1/2)^{l}}{t^{l}}.$$ 
In particular, the previous bounds are also valid when $t\in(0,1]$ since $G(t,j)$ is decreasing for $j\in\N_0,$ therefore $|\delta_{right}^lG(t,n)|\leq CG(t,n),$ and $G(t,n)\leq \frac{G(t,n)}{t^{l/2}}$ and $G(t,n)\leq C\frac{G(t,n)(n+1/2)^l}{t^{l}}$ for all $t\in(0,1]$ and $n\in\N.$


\end{remark}

The following result shows decay rates for the $\ell^1$-norm of the differences of any order. The first difference was proved in \cite[Theorem 4.3]{AG-CM}. One should keep in mind that $\sum_{j\in\Z}G(t,j)=1$. 

\begin{lemma}\label{kernelest} 
Let $l\in\N$ and $t>0,$ then 
$$\sum_{j\in\Z}|\delta_{right}^l G(t,j)|:=\|\delta_{right}^l G (t,\cdot)\|_1\leq C\min\{1,\frac{1}{t^{l/2}}\}.$$
\end{lemma}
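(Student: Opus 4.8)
The plan is to split the estimate into two regimes, $t \le 1$ and $t > 1$, and to prove the bound $\|\delta_{right}^l G(t,\cdot)\|_1 \le C$ for small $t$ and $\|\delta_{right}^l G(t,\cdot)\|_1 \le C t^{-l/2}$ for large $t$; together these give the claimed $C\min\{1, t^{-l/2}\}$. For the regime $0 < t \le 1$ the bound is essentially trivial: expanding $\delta_{right}^l G(t,j) = \sum_{i=0}^{l}\binom{l}{i}(-1)^i G(t,j+i)$ and using the triangle inequality together with $\sum_{j\in\Z} G(t,j) = 1$ and $G(t,j) \ge 0$ gives $\|\delta_{right}^l G(t,\cdot)\|_1 \le 2^l$, uniformly in $t$. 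Since for $t \le 1$ one has $\min\{1, t^{-l/2}\} = 1$, this settles that case.

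For $t > 1$ I would use the pointwise estimates from Lemma~\ref{heatbounds} (equivalently the cleaner form in Remark~\ref{remheatbounds}) and sum in $j$. By symmetry $G(t,j) = G(t,-j)$ and the parity of the differences, it suffices to control $\sum_{j \ge 0} |\delta_{right}^l G(t,j)|$ (with a harmless adjustment of the range of summation coming from the shift in $\delta_{right}$, which only contributes finitely many extra terms each bounded by $G$-values). Split the sum at $j \simeq \sqrt{t}$. For $1 \le j \le \sqrt{t} - 1/2$, Remark~\ref{remheatbounds} gives $|\delta_{right}^l G(t,j)| \le C\, G(t,j)/t^{l/2}$, so this part of the sum is bounded by $t^{-l/2} \sum_{j} G(t,j) \le t^{-l/2}$. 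For $j \ge \sqrt{t} - 1/2$, Remark~\ref{remheatbounds} gives $|\delta_{right}^l G(t,j)| \le C\, G(t,j)\,(j+1/2)^{l}/t^{l}$, and since on this range $(j+1/2)^{l}/t^{l} = \big((j+1/2)^2/t\big)^{l/2}\cdot t^{-l/2}$, I want $\sum_{j \ge \sqrt{t}} \big((j+1/2)^2/t\big)^{l/2} G(t,j) \le C$ so that this tail also contributes $\lesssim t^{-l/2}$. This reduces everything to a Gaussian-type moment bound for the heat kernel.

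The main obstacle is precisely that last step: showing $\sum_{j} (1 + j^2/t)^{l/2} G(t,j) \le C$ uniformly in $t > 1$, i.e.\ that the weighted tail moments of $G(t,\cdot)$ are controlled. The even integer moments are available in closed form from \eqref{polyI}: $\sum_{j\in\Z} j^{2k} G(t,j) = e^{-2t}\sum_{j} j^{2k} I_j(2t) = p_k(2t)$, a polynomial of degree $k$ in $t$, so $\sum_j j^{2k} G(t,j) \le C_k t^{k}$ for $t > 1$, whence $\sum_j (j^2/t)^{k} G(t,j) \le C_k$. Choosing $k = \lceil l/2 \rceil$ and using $(1 + j^2/t)^{l/2} \le C(1 + (j^2/t)^{k})$ closes the argument (when $l$ is even one may take $k = l/2$ directly; when $l$ is odd, Cauchy--Schwarz between the $k=0$ and $k = (l+1)/2$ moments, or simply the crude domination just stated, suffices). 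Assembling the two regimes yields $\|\delta_{right}^l G(t,\cdot)\|_1 \le C\min\{1, t^{-l/2}\}$, as desired.
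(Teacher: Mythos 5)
Your small-$t$ case and your treatment of the two large-$t$ regions $1\le j\lesssim\sqrt t$ and $j\gtrsim\sqrt t$ are correct and essentially identical to the paper's argument (same split at $\sqrt t$, same use of Lemma \ref{heatbounds}/Remark \ref{remheatbounds}, and the same moment bound $\sum_j j^{2k}G(t,j)=p_k(2t)\le C_kt^k$ from \eqref{polyI} to control the tail). The problem is the parenthetical step where you reduce to $j\ge 0$: the symmetry identity $|\delta_{right}^l G(t,j)|=|\delta_{right}^l G(t,|j|-l)|$ only works for $j\le -l$, so the indices $-l<j<0$ (there are $l-1$ of them, nonempty as soon as $l\ge 2$) are left over, and for these the $l$-th difference straddles the origin, so Lemma \ref{heatbounds} (stated for $n\in\N_0$) does not apply. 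Your proposed fix --- bound each such term "by $G$-values'' --- only gives $|\delta_{right}^l G(t,j)|\le 2^l\max_m G(t,m)\le Ct^{-1/2}$, which for $l\ge2$ and $t$ large is far weaker than the required $Ct^{-l/2}$; so as written the central block destroys the bound.

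This is exactly the point where the paper invokes Lemma \ref{pointbessel}: the cancellations in the asymptotic expansion \eqref{asymptotic} give, for each fixed $n\in\Z$ (in particular for the finitely many central indices $|j|\le l$), the decay $|\delta_{right}^l G(t,n)|\le C_n t^{-[(l+1)/2]-1/2}\le C_n t^{-(l+1)/2}$, and summing the $O(l)$ central terms then contributes $Ct^{-l/2}$ as needed. Your argument becomes complete if you add this ingredient (or an equivalent one, e.g.\ the uniform bound $|\delta_{right}^l G(t,n)|\le C_l\,t^{-(l+1)/2}$ obtained from the integral representation \eqref{fourier}, since $\delta_{right}^l$ brings out a factor $(1-e^{-i\theta})^l$); without it, the step "finitely many extra terms each bounded by $G$-values'' is a genuine gap.
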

\begin{proof}
Let $t >0.$ If $t\in(0,1],$ then it is clear that $\sum_{j\in\Z}|\delta_{right}^lG(t,j)|\leq C.$ Now let $t>1.$ Then $$
\sum_{j\in\Z}|\delta_{right}^l G(t,j)|=\biggl(\sum_{|j|\leq l}+\sum_{|j|>l}\biggr)|\delta_{right}^l G(t,j)|:=I+II.
$$ On the one hand, by Lemma \ref{pointbessel} we have that
$$I\leq \frac{C}{t^{1/2+[(l+1)/2]}}\leq \frac{C}{t^{1/2+l/2}}\leq \frac{C}{t^{l/2}}.$$
On the other hand, since $|\delta_{right}^l G(t,j)|=|\delta_{right}^l G(t,|j|-l)|$ for $j\le -l,$ we have that 
\begin{align*}II &=\sum_{|j|>l}|\delta_{right}^l G(t,j)|\leq 2 \sum_{j\geq 1}|\delta_{right}^l G(t,j)|\leq 2 \biggl(\sum_{1\leq j\leq \sqrt{t}-1/2}+\sum_{\substack{j>\sqrt{t}-1/2\\ j\ge 1}}\biggr)|\delta_{right}^l G(t,j)|\\&:=II.1+II.2.
\end{align*}
Let now $k$ the least natural number such that $2k\geq l.$ Then by Lemma \ref{heatbounds} and Remark \ref{remheatbounds} we have $$II.1\leq \frac{C}{t^{l/2}}\sum_{1\leq j\leq  \sqrt{t}-1/2}G(t,j)\leq \frac{C}{t^{l/2}},$$ and 
$$
II.2\leq \frac{C}{t^{l}}\sum_{\substack{j>\sqrt{t}-1/2\\ j\ge 1}}G(t,j)(|j|+1/2)^l\\
\leq \frac{C}{t^{l/2+k}}\sum_{\substack{j>\sqrt{t}-1/2\\ j\ge 1}}G(t,j)j^{2k}\leq \frac{C p_k(2t)}{t^{l/2+k}}\leq \frac{C}{t^{l/2}}.
$$
\end{proof}

\begin{remark}\label{Remkernelest} Note that the $\ell^1$-norm of the even differences $2l$ of $G(t,n)$ can be seen as the $\ell^1$-norm of the derivative or order $l$  in time. In this case our result coincides with the ones proved in \cite{I2} and \cite{I}. 
\end{remark}

\subsection{Discrete Poisson kernel}

The discrete Poisson kernel is defined as
$$
P(y,j):=
\frac{y}{2\sqrt{\pi}}\int_0^\infty \frac{e^{-\frac{y^2}{4t}}}{t^{3/2}} G(t,j)\,dt,\quad j\in\Z, \:\;y>0.
$$
Since we don't have an explicit expression in terms of known functions for $P(y,j),$ we will take advantage of the subordination formula and the properties we have proved for the heat kernel,  $G(t,j),$ to get some estimates of the Poisson kernel and, more generally, of subordinate functions of type $y\frac{e^{-\frac{cy^2}{t}}}{t^{3/2}},$ being $c>0$ an arbitrary constant. Along the paper, from one inequality to other, the constant $c$ could be different, but we will not change it on them for convenience.

Observe that, for every $l\in\N,$ $y,t>0$
\begin{equation}\label{Poissontrick}\displaystyle\Bigg|\partial_y^l\Bigg( \frac{{y}e^{-\frac{cy^2}{t}}}{t^{3/2}}\Bigg)\Bigg|\le C \frac{e^{-\frac{cy^2}{t}}}{t^{l/2+1}}\le \frac{C}{y^{l-1}}\; \frac{y^{l-1}e^{-\frac{cy^2}{t}}}{t^{\frac{l-1}{2}}t^{3/2}}\le \frac{Cy}{y^{l}}\frac{e^{-\frac{cy^2}{t}}}{t^{3/2}}, \end{equation} and for every $c>0,$ $\displaystyle\int_0^{\infty}y\frac{e^{-\frac{cy^2}{t}}}{t^{3/2}}\,dt=C<\infty.$

Next results state pointwise estimates for Poisson-type discrete kernels.

\begin{lemma}\label{Poissonlema}
Let $y,c>0$ and $l\in\N_0.$ The following estimates hold:\begin{itemize}

    \item[(i)] $\left|\partial_y^l\biggl(\int_0^{\infty}y\frac{e^{-\frac{cy^2}{t}}}{t^{3/2}}G(t,j)\,dt\biggr)\right| \leq \frac{C}{y^{l}(1+|j|)},\quad j\in\Z.$
    \item[(ii)] $\left|\partial_y^l\biggl(\int_0^{\infty}y\frac{e^{-\frac{cy^2}{t}}}{t^{3/2}}G(t,j)\,dt\biggr)\right|\leq \frac{Cy}{y^{l}|j|^2},\quad j\neq 0.$ 
    \item [(iii)]$\left|\partial_y^l\delta_{right/left}\biggl(\int_0^{\infty}y\frac{e^{-\frac{cy^2}{t}}}{t^{3/2}}G(t,j)\,dt\biggr)\right| \leq \frac{C}{y^{l+2}},\quad j\in\Z.$
    \item [(iv)]$\left|\partial_y^l\delta_{right/left}\biggl(\int_0^{\infty}y\frac{e^{-\frac{cy^2}{t}}}{t^{3/2}}G(t,j)\,dt\biggr)\right| \leq \frac{C}{y^{l}|j|^2},\quad j\neq 0$

\end{itemize} 
\end{lemma}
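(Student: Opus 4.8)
The plan is to peel off the $y$-derivatives with \eqref{Poissontrick}, feed in sharp pointwise bounds for the discrete heat kernel, and then reduce to elementary one-variable integrals. \emph{Reduction to $l=0$.} First I would differentiate under the integral sign, which is legitimate by dominated convergence since $0\le G(t,j)\le1$ and, by \eqref{Poissontrick}, $|\partial_y^l(ye^{-cy^2/t}t^{-3/2})|\le Ce^{-cy^2/t}t^{-l/2-1}$, a function that lies in $L^1(0,\infty)$ for $l\ge1$. Because $\delta_{right/left}$ is a finite difference in $j$ it commutes with $\partial_y$ and with $\int_0^\infty(\cdot)\,dt$, and the symmetry $G(t,-n)=G(t,n)$ lets one replace $\delta_{left}$ by $\delta_{right}$. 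Hence \eqref{Poissontrick} reduces (i)--(iv) to the case $l=0$: writing
$$J(y,j):=y\int_0^\infty\frac{e^{-cy^2/t}}{t^{3/2}}G(t,j)\,dt,\qquad K(y,j):=y\int_0^\infty\frac{e^{-cy^2/t}}{t^{3/2}}|\delta_{right}G(t,j)|\,dt,$$
it suffices to prove $J(y,j)\le C(1+|j|)^{-1}$ and $K(y,j)\le Cy^{-2}$ for all $j$, and $J(y,j)\le Cy\,j^{-2}$, $K(y,j)\le C\,j^{-2}$ for $j\neq0$.

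\emph{Heat-kernel bounds.} The analytic input is squeezed out of the integral representation \eqref{Bess}: using $(1-\tfrac{u}{4t})^{n-1/2}\le e^{-(n-1/2)u/(4t)}$ and enlarging the $u$-integral to $(0,\infty)$ gives $G(t,n)\le C(1+t)^{-1/2}\bigl(1+\tfrac{n}{4t}\bigr)^{-n}$ for $n\ge0$, whence, distinguishing $|n|\le t$ from $|n|\ge t$,
$$G(t,n)\le \frac{C}{\sqrt{1+t}}\,e^{-cn^2/t}\ \ (|n|\le t),\qquad G(t,n)\le \frac{C}{\sqrt{1+t}}\,e^{-c|n|}\ \ (|n|\ge t),$$
together with the uniform bound $G(t,n)\le C(1+t)^{-1/2}$ and, from the power series for $I_n$, the bound $G(t,n)\le t^{|n|}/|n|!$ that guarantees integrability at $t=0$. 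Feeding these into Lemma \ref{heatbounds}/Remark \ref{remheatbounds}, which estimate $|\delta_{right}G(t,n)|$ by $Ct^{-1/2}G(t,n)$ for $|n|\le\sqrt t$ and by $Ct^{-1}(|n|+1)G(t,n)$ for $|n|\ge\sqrt t$, and using $(|n|+1)G(t,n)\le C$ (again a consequence of the sub-Gaussian bound), I obtain $|\delta_{right}G(t,n)|\le C(1+t)^{-1}$ uniformly in $n$, plus the localized refinements $|\delta_{right}G(t,j)|\le Ct^{-1}e^{-cj^2/t}$ when $|j|\le\sqrt t$ and $|\delta_{right}G(t,j)|\le C(1+|j|)t^{-3/2}e^{-cj^2/t}$ when $\sqrt t\le|j|\le t$.

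\emph{Evaluating the integrals.} The bound $K(y,j)\le Cy^{-2}$ (i.e. (iii)) is immediate from the uniform estimate $|\delta_{right}G(t,j)|\le C(1+t)^{-1}$: split at $t=1$ and use $t^{-3/2}\le t^{-5/2}$ on $(0,1]$. For the rest, $j=0$ is trivial since $y\int_0^\infty e^{-cy^2/t}t^{-3/2}dt=C$, while for $j\neq0$ I would split $\int_0^\infty=\int_0^{|j|}+\int_{|j|}^{j^2}+\int_{j^2}^\infty$ and insert the appropriate kernel bound on each band: on $t\ge j^2$ (so $|j|\le\sqrt t$) the bounds $G(t,j)\le C(1+t)^{-1/2}$ and $|\delta_{right}G(t,j)|\le Ct^{-1}e^{-cj^2/t}$; on $|j|\le t\le j^2$ (so $\sqrt t\le|j|$) the sub-Gaussian bounds $G(t,j)\le Ct^{-1/2}e^{-cj^2/t}$ and $|\delta_{right}G(t,j)|\le C(1+|j|)t^{-3/2}e^{-cj^2/t}$; on $t\le|j|$ (in particular on $(0,1]$) the bounds $G(t,j)\le Ce^{-c|j|}$ and $G(t,j)\le t^{|j|}/|j|!$ — with, if needed, a one-line split into $y\le|j|$ and $y>|j|$ to exploit the factor $e^{-cy^2/t}$ (note $e^{-cy^2/|j|}\le e^{-cy}$ once $y\ge|j|$). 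After these substitutions every surviving integral has the form $\int t^{-a}e^{-cA/t}dt$ with $A\in\{y^2,\,y^2+j^2\}$, and $u=cA/t$ turns it into $\Gamma(a-1)c^{1-a}A^{1-a}$; summing the pieces and using elementary inequalities ($\min\{y j^{-2},y^{-1}\}\le|j|^{-1}$, $y(y^2+j^2)^{-1}\le(2|j|)^{-1}$, $y(y^2+j^2)^{-3/2}\le(y^2+j^2)^{-1}$, $(1+|j|)^k e^{-c|j|}\le C$, $|j|!\,(|j|-\tfrac12)\ge c\,j^2$) produces exactly the four claimed bounds.

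\emph{Expected main obstacle.} Everything except one band is routine. The delicate range is $|j|\lesssim t\lesssim j^2$, where the discrete heat kernel is neither Gaussian-concentrated nor polynomially negligible: the bounds $G(t,j)\lesssim(1+t)^k(1+|j|)^{-2k}$ one gets from the moment identities \eqref{polyI} fall short by a full power of $|j|$ at the transition $t\asymp j^2$, so one genuinely needs the sub-Gaussian estimate $G(t,j)\lesssim t^{-1/2}e^{-cj^2/t}$ distilled from \eqref{Bess} --- this is precisely where the failure of a clean global Gaussian bound for the discrete heat kernel has to be circumvented. A related subtlety is that part (iii) needs $|\delta_{right}G(t,j)|\lesssim 1/t$ \emph{uniformly in $j$}, whereas the weaker $1/\sqrt t$ coming for free from $\|\delta_{right}G(t,\cdot)\|_1$ (Lemma \ref{kernelest}) would lose one power of $y$; securing the $1/t$ bound again rests on combining Lemma \ref{heatbounds} with the sub-Gaussian pointwise estimate above.
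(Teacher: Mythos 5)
Your proposal is correct in substance, but it takes a genuinely different route from the paper. The paper's proof of (i), (ii) and (iv) is much shorter because it imports ready-made estimates from \cite[Lemma 4.1]{AG-CM}, namely $G(t,j)\le C\,t/|j|^3$ and $|\delta_{right}G(t,j)|\le C\,t/|j|^4$ for $0<t\le |j|^2$, together with $G(t,j)\le Ct^{-1/2}$ and $|\delta_{right}G(t,j)|\le C|j|\,t^{-3/2}$ for $t\ge |j|^2$; with these, a single split of the $t$-integral at $t=|j|^2$ suffices, and the region $t\to 0$ causes no trouble at all. For (iii) the paper uses the first-difference bound $|\delta_{right}G(t,j)|\le C\frac{|j|+1/2}{t}G(t,j)$ coming from Lemma \ref{heatbounds}, absorbs the factor $y^2/t$ into the subordination exponential, and then reuses the computation of (i) so that the decay $\frac{1}{1+|j|}$ cancels the factor $|j|+1/2$. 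You instead re-derive sub-Gaussian/sub-exponential bounds for $G$ directly from \eqref{Bess}, establish the uniform-in-$j$ estimate $|\delta_{right}G(t,j)|\le C(1+t)^{-1}$ (which indeed yields (iii) by a one-variable integral and is a valid alternative to the paper's absorption trick), and split into the three bands $t\le |j|$, $|j|\le t\le j^2$, $t\ge j^2$. Your version is more self-contained (no appeal to \cite{AG-CM}) and the kernel bounds you distill are of independent interest; the paper's version is shorter precisely because the bounds $G(t,j)\le Ct/|j|^3$ and $|\delta_{right}G(t,j)|\le Ct/|j|^4$ dispose of the whole range $t\le j^2$ in one stroke. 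One step you must write out carefully is the band $t\le |j|$ in (ii) and (iv): the bound $G(t,j)\le t^{|j|}/|j|!$ is useless up to $t=|j|$ (there it is of size $e^{|j|}|j|^{-1/2}$), while $G(t,j)\le Ce^{-c|j|}$ alone, paired with $\int_0^{|j|}t^{-3/2}e^{-cy^2/t}\,dt\le C/y$, gives only $Ce^{-c|j|}$, which does not beat $Cy/j^2$ for small $y$; so the further split you hint at is genuinely needed, using $t^{|j|}/|j|!$ on $(0,1]$ (with $ye^{-cy^2/t}\le C$ when $y$ is large and $|j|!\,(|j|-\tfrac12)\ge c\,j^2$) and $e^{-c|j|}\le C/j^2$ together with $\int_1^{|j|}t^{-3/2}\,dt\le C$ on $[1,|j|]$. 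With that spelled out, all four estimates follow as you indicate.
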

\begin{proof}
First we prove epigraphs (i) and (ii). Observe that, from \cite[Lemma 4.1 (i)]{AG-CM} we have that, for  $0<t\le |j|^2,$ $|G(t,j)|\le C\frac{t}{|j|^3},$ and for $t>0$, $G(t,j)\le G(t,0)\le \frac{C}{t^{1/2}},$ for $j\in\Z.$  In virtue of \eqref{Poissontrick}, it follows that, for $j\neq 0,$ \begin{align*}
    &\left|\partial_y^l\biggl(\int_0^{\infty}y\frac{e^{-\frac{cy^2}{t}}}{t^{3/2}}G(t,j)\,dt\biggr)\right|\le \frac{C}{y^l}\biggl(\int_0^{|j|^2} y\frac{e^{-\frac{cy^2}{t}}}{t^{3/2}}\frac{t}{|j|^3}dt+\int_{|j|^2}^\infty y\frac{e^{-\frac{cy^2}{t}}}{t^{3/2}}\frac{1}{t^{1/2}}dt\biggr)\\
    \nonumber&\le C \frac{C}{y^l|j|}\int_0^{\infty} y\frac{e^{-\frac{cy^2}{t}}}{t^{3/2}}dt\leq \frac{C}{y^l(1+|j|)},
    \end{align*}
and  \begin{align*}
    &\left|\partial_y^l\biggl(\int_0^{\infty}y\frac{e^{-\frac{cy^2}{t}}}{t^{3/2}}G(t,j)\,dt\biggr)\right|\le \frac{Cy}{y^l}\biggl(\int_0^{|j|^2} \frac{1}{t^{3/2}}\frac{t}{|j|^3}dt+\int_{|j|^2}^\infty \frac{1}{t^{2}}dt\biggr)\le C \frac{y}{y^l|j|^2}.
    \end{align*}
The case $j=0$ in part (i) follows from \eqref{Poissontrick} and the fact that $G(t,0)\leq C$ for $t>0.$

Now we prove epigraphs (iii) and (iv). Since $G(t,j)=G(t-j)$ for $j\in\N$, and $G(t,j+1)\leq G(t,j)$ for $j\in\N_0$,  from Lemma \ref{heatbounds}  we get that
\begin{align*}
|\delta_{right}G(t,j)|&\leq C\frac{j+1/2}{t}G(t,j+1)\leq C\frac{j+1/2}{t}G(t,j), \:\; \text{ for } j\in\N_0\\\text{ and } \\
|\delta_{right}G(t,j)|&=|G(t,|j|)-G(t,|j|-1)|
\leq C\frac{|j|+1/2}{t}G(t,|j|), \text{ for } j\leq -1.
\end{align*}
Therefore, we have that \begin{equation*}\label{firstdifference}
  |\delta_{right}G(t,j)|\leq C\frac{(|j|+1/2)}{t}G(t,|j|),\quad \text{for every } j\in\Z \text{ and }\:t>0.
  \end{equation*}
Also, we have for $t>0$ and $j\neq 0,$
$$|\delta_{right}G(t,j)|\le C\left\{ \begin{array}{c} 
\frac{|j|}{t^{3/2}},  \text{ if } |j|^2\le  t,\\
\frac{t}{|j|^{4}}, \text{ if } |j|^2\ge  t,
\end{array}\right.
$$ see \cite[Lemma 4.1 (ii)]{AG-CM}. Thus, by  using \eqref{Poissontrick} and the estimates above we get that, for $j\in\Z,$ \begin{align*}
    &\left|\partial_y^l\delta_{right}\biggl(\int_0^{\infty}y\frac{e^{-\frac{cy^2}{t}}}{t^{3/2}}G(t,j)\,dt\biggr)\right|\le \frac{C}{y^l}\int_0^{\infty}y\frac{e^{-\frac{cy^2}{t}}}{t^{3/2}}\frac{|j|+1/2}{t}G(t,j)\,dt \\
    &\leq \frac{C(|j|+1/2)}{y^{l+2}}\int_0^{\infty}y\frac{e^{-\frac{cy^2}{t}}}{t^{3/2}}\frac{y^2}{t}G(t,j)\,dt \leq \frac{C(|j|+1/2)}{y^{l+2}}\int_0^{\infty}y\frac{e^{-\frac{cy^2}{t}}}{t^{3/2}}G(t,j)\,dt\\
    &\leq \frac{C}{y^{l+2}},
    \end{align*}
where in the last inequality we have proceeded as in epigraph (i), and for $j\neq 0$ \begin{align*}
    &\left|\partial_y^l\delta_{right}\biggl(\int_0^{\infty}y\frac{e^{-\frac{cy^2}{t}}}{t^{3/2}}G(t,j)\,dt\biggr)\right|\le \frac{C}{y^l}\biggl(\int_0^{|j|^2} y\frac{e^{-\frac{cy^2}{t}}}{t^{3/2}}\frac{t}{|j|^4}dt+\int_{|j|^2}^\infty y\frac{e^{-\frac{cy^2}{t}}}{t^{3/2}}\frac{|j|}{t^{3/2}}dt\biggr)\\
    \nonumber&\le  \frac{C}{y^l|j|^2}\int_0^{\infty} y\frac{e^{-\frac{cy^2}{t}}}{t^{3/2}}dt= \frac{C}{y^l|j|^2}.
    \end{align*}

The estimates with $\delta_{left}$ in (iii) and (iv) are analogous.
\end{proof}


From Lemma \ref{kernelest} we can get the $\ell^1$-norm estimates for the Poisson semigroup.

\begin{lemma}\label{Poissonest} Let $y>0.$ \\

\begin{itemize}
\item[(1)]$\displaystyle \sum_{j\in\Z}|\partial_y^mP(y,j)|:=\|\partial_y^mP (y,\cdot)\|_1\leq Cy^{-m},$ for every $m\in\N_0.$\\

\item[(2)]$\displaystyle \sum_{j\in\Z}|\delta_{right}^lP(y,j)|:=\| \delta_{right}^l P(y,\cdot)\|_1\leq Cy^{-l},$ for every $l\in\N_0.$
\end{itemize}
\end{lemma}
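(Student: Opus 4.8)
The plan is to reduce both statements to the heat-kernel $\ell^1$-estimates proved in Lemma \ref{kernelest} via the subordination formula
$$
P(y,j)=\frac{y}{2\sqrt{\pi}}\int_0^\infty \frac{e^{-\frac{y^2}{4t}}}{t^{3/2}}\,G(t,j)\,dt,
$$
together with the elementary pointwise bound \eqref{Poissontrick} on the $y$-derivatives of the subordination weight. The key observation is that both $\partial_y^m$ and $\delta_{right}^l$ commute with the $t$-integral, and that $\delta_{right}^l$ acts only on the $G(t,j)$ factor, while $\partial_y^m$ acts only on the Gaussian-type weight $y\,t^{-3/2}e^{-y^2/(4t)}$; in each case we can then move the sum over $j$ inside the integral.

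\medskip

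\textbf{Proof of (2).} First I would write, using that $\delta_{right}^l$ does not see the $t$-variable,
$$
\sum_{j\in\Z}|\delta_{right}^l P(y,j)|\le \frac{y}{2\sqrt{\pi}}\int_0^\infty \frac{e^{-\frac{y^2}{4t}}}{t^{3/2}}\sum_{j\in\Z}|\delta_{right}^l G(t,j)|\,dt.
$$
By Lemma \ref{kernelest}, $\sum_{j\in\Z}|\delta_{right}^l G(t,j)|\le C\min\{1,t^{-l/2}\}\le C\,t^{-l/2}$ (and also $\le C$), so
$$
\sum_{j\in\Z}|\delta_{right}^l P(y,j)|\le C\,y\int_0^\infty \frac{e^{-\frac{y^2}{4t}}}{t^{3/2}}\,t^{-l/2}\,dt.
$$
The change of variable $t=y^2 r$ turns this into $C\,y^{-l}\int_0^\infty r^{-l/2-3/2}e^{-1/(4r)}\,dr=C\,y^{-l}$, the integral being finite because the integrand decays like $r^{-l/2-3/2}$ at infinity and is super-exponentially small at $0$. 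This gives the claimed bound $Cy^{-l}$, and the case $l=0$ recovers $\|P(y,\cdot)\|_1\le C$ since $\sum_j G(t,j)=1$.

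\medskip

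\textbf{Proof of (1).} Here I would differentiate under the integral sign in $y$:
$$
\sum_{j\in\Z}|\partial_y^m P(y,j)|\le \frac{1}{2\sqrt{\pi}}\int_0^\infty \Bigl|\partial_y^m\Bigl(\frac{y\,e^{-\frac{y^2}{4t}}}{t^{3/2}}\Bigr)\Bigr|\sum_{j\in\Z}G(t,j)\,dt
=\frac{1}{2\sqrt{\pi}}\int_0^\infty \Bigl|\partial_y^m\Bigl(\frac{y\,e^{-\frac{y^2}{4t}}}{t^{3/2}}\Bigr)\Bigr|\,dt,
$$
using $G(t,j)\ge 0$ and $\sum_j G(t,j)=1$. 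By \eqref{Poissontrick} with $c=1/4$, the absolute value of this $y$-derivative is bounded by $\dfrac{C}{y^m}\cdot\dfrac{y\,e^{-\frac{y^2}{4t}}}{t^{3/2}}$, and since $\int_0^\infty y\,t^{-3/2}e^{-y^2/(4t)}\,dt=C<\infty$ (the remark right after \eqref{Poissontrick}), we obtain $\sum_{j\in\Z}|\partial_y^m P(y,j)|\le C y^{-m}$.

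\medskip

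The only points needing a little care are the justification of differentiation under the integral sign and of interchanging the sum over $j$ with the $t$-integral; both are routine once one notes that for fixed $y>0$ the dominating functions produced above ($t\mapsto t^{-l/2-3/2}e^{-y^2/(4t)}$ and $t\mapsto |\partial_y^m(yt^{-3/2}e^{-y^2/(4t)})|$) are integrable on $(0,\infty)$, uniformly for $y$ in compact subsets of $(0,\infty)$, and the series $\sum_j|\delta_{right}^lG(t,j)|$ converges by Lemma \ref{kernelest}. I do not expect any genuine obstacle: all the real work is already contained in Lemma \ref{kernelest} and in the pointwise bound \eqref{Poissontrick}.
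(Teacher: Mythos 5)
Your proposal is correct and follows essentially the same route as the paper: for (1) it bounds $|\partial_y^m(y\,t^{-3/2}e^{-y^2/(4t)})|$ by \eqref{Poissontrick} and uses $\sum_j G(t,j)=1$, and for (2) it pushes $\delta_{right}^l$ onto $G(t,j)$, applies Lemma \ref{kernelest}, and scales the $t$-integral. Your added remarks on justifying the interchanges of sum, integral and derivative are routine and do not change the argument.
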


\begin{proof} 

Let $y>0$ and $m\in\N_0.$ By \eqref{Poissontrick} and Lemma \ref{kernelest} we have that
\begin{align*}
&\sum_{j\in\Z}|\partial_y^mP(y,j)|\le \frac{C}{y^m} \int_0^\infty y\frac{e^{-\frac{cy^2}{t}}}{t^{3/2}}\sum_{j\in\Z}G(t,j)dt =\frac{C}{y^m}.\\
\end{align*}
On the other hand, Lemma \ref{kernelest} implies that
\begin{align*}
&\sum_{j\in\Z}|\delta_{right}^lP(y,j)|\le   \frac{y}{2\sqrt{\pi}}\int_0^\infty \frac{e^{-\frac{y^2}{4t}}}{t^{3/2}}\sum_{j\in\Z}|\delta_{right}^lG(t,j)|\,dt\le Cy\int_0^\infty \frac{e^{-\frac{y^2}{4t}}}{t^{3/2+l/2}}dt\le\frac{C}{y^{l}}.\\
\end{align*}

\end{proof}

\begin{remark}\label{RemPoissonest}
Taking into account that $P(y,j)$ satisfies the Poisson equation, the results in Lemma \ref{Poissonest} imply that
\begin{align*}
\sum_{j\in\Z}|\delta_{right/left}\delta_{right/left}P(y,j)|:=\| \delta_{right/left}\delta_{right/left} P(y,\cdot)\|_1=\|\partial_y^2P(y,\cdot)\|_1\leq Cy^{-2},\quad {y>0}.
\end{align*}
\end{remark}

\subsection{Heat and Poisson semigroups}\textcolor{white}{}
\newline
The following remark contains crucial observations to get our results.

\begin{remark}\label{obs}Let $f:\mathbb{Z}\to \R$ be a function such that the semigroup $e^{t\Delta_d}f$ is well defined for every $t>0$. 
Then it follows that $\delta_{right/left}e^{t\Delta_d}f$ and $\partial_t^l e^{t\Delta_d}f$ are also well defined for every $l\in\N$. In that case, we have that

\begin{align*}
    \delta_{right}e^{t\Delta_d}f(n)&=e^{t\Delta_d}f(n)-e^{t\Delta_d}f(n+1)\\
    &=\sum_{j\in\Z} G(t,n-j)f(j)-\sum_{j\in\Z} G(t,n+1-j)f(j)=\sum_{j\in\Z} (\delta_{right}G(t,n-j))f(j).
\end{align*}
Moreover, by performing a change of variables, we get that
\begin{align*}
    \delta_{right}e^{t\Delta_d}f(n)
    &=\sum_{j\in\Z} G(t,j)f(n-j)-\sum_{j\in\Z} G(t,j)f(n+1-j)=\sum_{j\in\Z} G(t,j)\delta_{right}f(n-j).
\end{align*}
The analogous identities for $\delta_{left}$ hold.

On the other hand, for $t=t_1+t_2,$ where $t,t_1,t_2>0,$ the semigroup property gives
$$
e^{t\Delta_d}f(n)=e^{t_1\Delta_d}(e^{t_2\Delta_d}f)(n)=\sum_{j\in\Z}G(t_1,j)e^{t_2\Delta_d}f(n-j).
$$
Furthermore, since 
\begin{align*}
    \partial_te^{t\Delta_d}f(n)|_{t=t_1+t_2}=\partial_{t_1}e^{(t_1+t_2)\Delta_d}f(n)=\partial_{t_2}e^{(t_1+t_2)\Delta_d}f(n),
    \end{align*}
    we obtain that 
    \begin{align*}
         \partial_te^{t\Delta_d}f(n)|_{t=t_1+t_2}=\sum_{j\in\Z}\partial_{t_1}G(t_1,j)e^{t_2\Delta_d}f(n-j)=\sum_{j\in\Z}G(t_1,j)\partial_{t_2}e^{t_2\Delta_d}f(n-j).
    \end{align*}

For the Poisson semigroup the analogous properties hold. If $e^{-y\sqrt{-\Delta_d}}f$ is well defined for every $y>0$, then it is clear that $\delta_{right}e^{-y\sqrt{-\Delta_d}}f$ is well defined. Also, if $$\sum_{j\in\Z}y\biggl(\int_{0}^{\infty}\frac{e^{-\frac{cy^2}{t}}}{t^{3/2}}G(t,j)\biggr)|f(n-j)|<\infty$$ for each $y>0,\:n\in\Z,$ and $c>0,$ then, by \eqref{Poissontrick}, $\partial_y^le^{-y\sqrt{-\Delta_d}}f$ is well defined for every $l\in\N$ and the properties above stated for the discrete heat semigroup are also fulfilled for the Poisson semigroup.
\end{remark}

Next we study functions for which the semigroups are well defined.

\begin{lemma}\label{semigroupP}
Let  $f:\Z\to\R$.
\begin{itemize}
\item[A.] Suppose that for certain $\alpha>0,$ $\frac{|f|}{1+|\cdot|^\alpha}\in\ell^\infty(\Z).$ Then,
\begin{itemize}
   \item[(i)] For every $t>0$,  $e^{t\Delta_d}f$ is well defined and $${|e^{t\Delta_d}f(n)|}\le C{(1+|n|^\alpha+t^{\alpha/2}}),\quad n\in\Z.$$ 
        \item[(ii)] For every $l\in\N,$ and $t>0,$ $$|\delta_{right}^le^{t\Delta_d}f(n)|\le C\left((1+|n|^{\alpha})\min\left\{1,\frac{1}{t^{l/2}}\right\}+t^{\alpha/2-l/2}\right),  \quad n\in\Z.$$
        \item[(iii)] $\displaystyle\lim_{t\to 0} e^{t\Delta_d}f(n)=f(n)$, for every $n\in\Z.$ 
                    \end{itemize}
                    \item[B.] Suppose that $f$ satisfies $\sum_{j\in\Z}\frac{|f(j)|}{1+|j|^2}<\infty$. Then,  $e^{-y\sqrt{-\Delta_d}}f$ is well defined for every $y>0$ and
       $\displaystyle\lim_{y\to 0} e^{-y\sqrt{-\Delta_d}}f(n)=f(n),$ for every $n\in\Z.$ 
 \end{itemize}
\end{lemma}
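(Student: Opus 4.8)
The plan is to treat the heat-semigroup statements in Part A and the Poisson statement in Part B by exploiting the kernel estimates already established, together with the trivial identity $\sum_{j\in\Z}G(t,j)=1$.

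\medskip

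\textbf{Part A.} For (i), the starting point is the convolution $e^{t\Delta_d}f(n)=\sum_{j\in\Z}G(t,j)f(n-j)$. Using $|f(n-j)|\le C(1+|n-j|^\alpha)\le C(1+|n|^\alpha+|j|^\alpha)$ and the fact that each sum $\sum_{j\in\Z}G(t,j)|j|^{m}$ is controlled by $e^{t}p(t)$-type bounds from \eqref{polyI} (more precisely $\sum_j G(t,j)|j|^{2k}=p_k(2t)$, a polynomial of degree $k$ vanishing at $0$, and the odd moments vanish), one gets absolute convergence and the bound $|e^{t\Delta_d}f(n)|\le C(1+|n|^\alpha)+C\sum_j G(t,j)|j|^\alpha$. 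The last term is estimated by $\sum_j G(t,j)(1+|j|^{2\lceil\alpha/2\rceil})^{\alpha/(2\lceil\alpha/2\rceil)}$ via H\"older/interpolation, or more simply by splitting $|j|^\alpha\le 1+|j|^{2k}$ with $2k\ge\alpha$ and noticing that for $t$ small this is $\le C$ while for $t$ large $p_k(2t)^{\alpha/(2k)}\le Ct^{\alpha/2}$; combining the two regimes gives the claimed $t^{\alpha/2}$. For (ii), write $\delta_{right}^l e^{t\Delta_d}f(n)=\sum_{j\in\Z}(\delta_{right}^lG(t,j))f(n-j)$ as in Remark \ref{obs}, split $|f(n-j)|\le C(1+|n|^\alpha)+C|j|^\alpha$, and bound the two resulting sums: the first by $(1+|n|^\alpha)\|\delta_{right}^lG(t,\cdot)\|_1\le C(1+|n|^\alpha)\min\{1,t^{-l/2}\}$ using Lemma \ref{kernelest}; the second, $\sum_j|\delta_{right}^lG(t,j)|\,|j|^\alpha$, by feeding Lemma \ref{heatbounds} and Remark \ref{remheatbounds} into the same moment computation as before, which produces the factor $t^{\alpha/2-l/2}$ (for $t\le 1$ one simply uses $|\delta_{right}^lG(t,j)|\le CG(t,j)$). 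For (iii), since $G(t,0)=e^{-2t}I_0(2t)\to 1$ and $G(t,j)=e^{-2t}I_j(2t)\to 0$ for $j\ne 0$ as $t\to 0$, dominated convergence (the tail is controlled uniformly in $t\in(0,1]$ by $\sum_{|j|>N}G(t,j)|j|^\alpha$, which is small because the moments $p_k(2t)$ are small near $0$) gives $e^{t\Delta_d}f(n)=\sum_j G(t,j)f(n-j)\to f(n)$.

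\medskip

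\textbf{Part B.} For the Poisson semigroup, I would first check well-definedness: by the subordination formula $e^{-y\sqrt{-\Delta_d}}f(n)=\sum_{j\in\Z}P(y,j)f(n-j)$ with $P(y,j)=\frac{y}{2\sqrt\pi}\int_0^\infty\frac{e^{-y^2/4t}}{t^{3/2}}G(t,j)\,dt$, and Lemma \ref{Poissonlema}(i) (with $l=0$) gives $P(y,j)\le C_y/(1+|j|)$, while Lemma \ref{Poissonlema}(ii) gives $P(y,j)\le C_yy/|j|^2$; the second bound shows $\sum_j P(y,j)\frac{1}{1+|j|^2}^{-1}$-type sums converge, and combined with $\sum_j\frac{|f(j)|}{1+|j|^2}<\infty$ one gets $\sum_j P(y,j)|f(n-j)|\le C_y\sum_j\frac{|f(n-j)|}{1+|j|^2}<\infty$, so the series converges absolutely. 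For the limit as $y\to0$, I would use the subordination identity itself: $e^{-y\sqrt{-\Delta_d}}f(n)=\frac{y}{2\sqrt\pi}\int_0^\infty\frac{e^{-y^2/4t}}{t^{3/2}}\,e^{t\Delta_d}f(n)\,dt$, substitute $s=y^2/4t$ to turn this into $\frac{1}{\sqrt\pi}\int_0^\infty s^{-1/2}e^{-s}\,e^{(y^2/4s)\Delta_d}f(n)\,ds$ (a probability average since $\frac{1}{\sqrt\pi}\int_0^\infty s^{-1/2}e^{-s}ds=1$), and then apply part A(iii) together with dominated convergence: as $y\to0$, $e^{(y^2/4s)\Delta_d}f(n)\to f(n)$ pointwise in $s$, and by A(i) the integrand is dominated by $C s^{-1/2}e^{-s}(1+|n|^\alpha+(y^2/4s)^{\alpha/2})$ which is integrable uniformly for $y\le 1$.

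\medskip

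\textbf{Main obstacle.} The routine parts are the convolution manipulations and the limit arguments; the genuine work is controlling the weighted sums $\sum_{j\in\Z}|\delta_{right}^lG(t,j)|\,|j|^\alpha$ with the precise power $t^{\alpha/2-l/2}$ for \emph{non-integer} $\alpha$. One cannot directly invoke \eqref{polyI} since $|j|^\alpha$ is not a polynomial, so the interpolation step (splitting into $1\le j\le\sqrt t$ and $j>\sqrt t$ exactly as in the proof of Lemma \ref{kernelest}, using the two regimes of Remark \ref{remheatbounds}, and then bounding $\sum_j G(t,j)j^{2k}=p_k(2t)\le Ct^k$ against the sub-polynomial growth $|j|^\alpha\le j^{2k}t^{\alpha/2-k}$ on the far range) has to be carried out with a little care to land on the right exponent; this is where I expect the bookkeeping to be most delicate, though no new idea beyond the lemmas already proved is needed.
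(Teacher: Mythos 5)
Your Part A is essentially the paper's own argument: the convolution bound with $|f(n-j)|\le C(1+|n|^\alpha+|j|^\alpha)$, the moment identities \eqref{polyI} with the split at $|j|\sim\sqrt t$ (the paper does the interpolation by hand via $\min\{|j|/\sqrt t,|j|\}^{2m-\alpha}$ rather than Jensen/H\"older, but this is cosmetic), Lemma \ref{kernelest} plus Remark \ref{remheatbounds} for (ii), and the mass-one identity with $G(t,0)\to1$, $p_m(2t)\to0$ for (iii). That part is fine. The well-definedness in Part B, via Lemma \ref{Poissonlema}(ii) giving $P(y,j)\lesssim_y(1+|j|^2)^{-1}$ and pairing with $\sum_j|f(j)|/(1+|j|^2)<\infty$, also matches the paper.

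The gap is in your limit argument in Part B. After the substitution $s=y^2/4t$ you dominate the integrand by $Cs^{-1/2}e^{-s}\bigl(1+|n|^\alpha+(y^2/4s)^{\alpha/2}\bigr)$ and claim uniform integrability for $y\le1$; but this function is integrable near $s=0$ only when $\alpha<1$, since $\int_0^1 s^{-1/2-\alpha/2}\,ds=\infty$ for $\alpha\ge1$. Under the hypothesis of Part B the only pointwise growth control you can extract is $|f(j)|\le C(1+|j|^2)$ (the terms of a convergent series are bounded), so A(i) only gives $|e^{(y^2/4s)\Delta_d}f(n)|\le C(1+|n|^2+y^2/(4s))$, and the resulting candidate dominant contains $s^{-3/2}e^{-s}$, which is not integrable near $0$ — indeed not even for a single fixed $y$. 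Equivalently, in the original variable the crude bound only gives $\frac{y e^{-y^2/4t}}{t^{3/2}}|e^{t\Delta_d}f(n)|\lesssim y\,t^{-1/2}$ at infinity, so this route does not even establish absolute convergence of the subordination integral, let alone a dominated-convergence passage to the limit. The decay that saves the day lives in the $j$-variable, not the $t$-variable: one must integrate the kernel first, i.e.\ use $\int_0^\infty \frac{y e^{-y^2/4t}}{t^{3/2}}G(t,j)\,dt\lesssim y/|j|^2$ (Lemma \ref{Poissonlema}(ii)) before summing against $f$. This is what the paper does: for $|n|\le A$ it splits $f=f\chi_{\{|j|\le2A\}}+f\chi_{\{|j|>2A\}}$, kills the tail part by the $y/|n-j|^2$ bound (which produces a factor $y\to0$), and handles the compactly supported part by strong continuity of the subordinated semigroup on $\ell^p(\Z)$. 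Your scheme can be repaired, but only by splitting the $s$-integral near $0$ and re-deriving exactly these kernel-in-$j$ estimates there, so as written the dominated convergence step fails.
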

\begin{proof}
We start proving A.(i). Let $t>0$, $n\in\Z$ and $m$ be the smallest natural number such that $2m\geq\alpha$. By using \eqref{polyI}, we have that
 \begin{align*}
     |e^{t\Delta_d}f(n)|&\le C\sum_{j\in\Z} G(t,j)(1+|n-j|^\alpha)\le C\sum_{j\in\Z} G(t,j)(1+|n|^\alpha+|j|^\alpha)\nonumber\\
     &\le C\left(1+|n|^\alpha+\sum_{|j|\le \sqrt{t}} G(t,j)|j|^\alpha+\sum_{|j|>\sqrt{t}} G(t,j)|j|^{\alpha}\min\left\{\frac{|j|}{\sqrt{t}}, |j|\right\}^{2m-\alpha}\right)\nonumber\\
     &= C\left(1+|n|^\alpha+t^{\alpha/2}+p_{m}(2t)\min\left\{\frac{1}{t^{m-\alpha/2}},1 \right\}\right)
     \le C (1+|n|^\alpha+t^{\alpha/2}).
 \end{align*}
In the last inequality we have used that $|p_m(2t)|\le C,$ if $0<t<1$, and $|p_m(2t)|\le C t^m$ whenever $t>1.$
 
Next we prove (ii).  Let $t>0$, $n\in\Z$ and $m$ be the smallest natural number such that $2m\geq l +\alpha$. Then since $\sum_{j\in\Z}|\delta_{right}^lG(t,j)|\leq C\min\{1,\frac{1}{t^{l/2}}\}$ (see Lemma \ref{kernelest}) we have that
  \begin{align*}
     | \delta_{right}^le^{t\Delta_d}f(n)|&\le C \sum_{j\in\Z}|\delta_{right}^lG(t,j)|(1+|n|^{\alpha}+|j|^{\alpha})\\
     & \leq C(1+|n|^{\alpha})\sum_{|j|\leq l}|\delta_{right}^lG(t,j)|+C\sum_{|j|> l}|\delta_{right}^lG(t,j)|(1+|n|^{\alpha}+|j|^{\alpha})\\
     &\leq C((1+|n|^{\alpha})\min\left\{1,\frac{1}{t^{l/2}}\right\}+C\sum_{|j|>l}|\delta_{right}^lG(t,j)| |j|^{\alpha}.
  \end{align*}
Recall that if $j< -l,$ one can write $|\delta_{right}^q G(t,j)|=|\delta_{right}^q G(t,|j|-l)|,$ with $|j|-l\geq 1,$ so by Remark \ref{remheatbounds}  and Lemma \ref{kernelest}, it follows that
 \begin{align*}
     \sum_{|j|>l}|\delta_{right}^lG(t,j)| |j|^{\alpha}&\leq \sum_{j\geq 1}|\delta_{right}^lG(t,j)|(|j+l|^{\alpha}+|j|^{\alpha}) \leq C\sum_{j\geq 1}|\delta_{right}^lG(t,j)||j|^{\alpha}\\
     &\leq C\biggl(t^{\alpha/2}\sum_{1\leq j\leq\sqrt{t}}|\delta_{right}^lG(t,j)|+\frac{1}{t^l}\sum_{j>\sqrt{t}\ge 1}G(t,j)|j|^{\alpha+l}+\sum_{\substack{j>\sqrt{t}\\0<t<1}}G(t,j)|j|^{\alpha}\biggr) \\
    &\leq C\biggl(t^{\alpha/2-l/2}+{p_{m}(2t)}\min\left\{\frac{1}{t^{l+m-\alpha/2-l/2}},1 \right\}\biggr) \\
    &\leq C(1+t^{\alpha/2-l/2}).
 \end{align*}
    
In the last inequality we have used that $|p_m(2t)|\le C,$ if $0<t<1$, and $|p_m(2t)|\le C t^m$ whenever $t>1.$

Now we prove (iii). Note that $$|e^{t\Delta_d}f(n)-f(n)|=\left|\sum_{j\neq 0}G(t,j)(f(n-j)-f(n))\right|\leq C\sum_{j\neq 0}G(t,j)(1+|n|^{\alpha}+|j|^{\alpha}).$$ On one hand, 
 $$\sum_{j\neq 0}G(t,j)(1+|n|^{\alpha})=(1+|n|^{\alpha})(1-G(t,0))\to 0,\quad t\to 0^+,$$ since $G(t,0)\to 1$ as $t\to 0^+.$ On the other hand, $$\sum_{j\neq 0}G(t,j)|j|^{\alpha}\leq p_m(2t)\to0,\quad t\to 0^+,$$ being now $2m\geq \alpha.$ Then the result follows.

 Finally we prove B. Let  $|n|\le A,$ $A\in\N.$  We can write
 $$
 f=f\chi_{\{|j|\le 2A\}}+f\chi_{\{|j|>2A\}}:=f_1+f_2.
 $$
 Note that when $|j|>2A$  one gets $|j|\leq  2|n-j|$. Then by Lemma \ref{Poissonlema} (ii) we have
 $$
 |e^{-y\sqrt{-\Delta_d}}f_2(n)|\le \sum_{|j|>2A}P(y,n-j)|f(j)|\le C\sum_{|j|>2A}\frac{y}{|n-j|^2}|f(j)|\le C y\sum_{|j|>2A}\frac{|f(j)|}{|j|^2}\rightarrow 0,
 $$
 as $y\to 0$.
 
 On the other hand, $f_1\in \ell^p(\Z)$ for each $p\geq 1,$ and $e^{t\Delta_d}$ is $C_0$-semigroup on $\ell^p(\Z).$ In particular it is strongly continuous at the origin in $\ell^p(\Z),$ and therefore pointwise, that is,  
  $$
  \lim_{y\to 0} e^{-y\sqrt{-\Delta_d}}f_1(n)=f_1(n)=f(n).
  $$
 We conclude that 
   {$e^{-y\sqrt{-\Delta_d}}f$ is well defined for every $y>0$ and $ \lim_{y\to 0} e^{-y\sqrt{-\Delta_d}}f(n)=f(n),$ for every $n\in\Z.$}
\end{proof}

\begin{lemma}\label{decay}
Let $f:\mathbb{Z}\to \R$. 
\begin{itemize}
    \item[1.] If $f$ satisfies $\frac{|f|}{1+|\cdot|^\alpha}\in\ell^\infty(\Z),$ for certain $\alpha>0,$  then, for every $n\in\Z,$ $m:=m_1+m_2$, with $m_1,m_2\in\N_0$ and $l\in\N_0$, such that $\frac{m}{2}+l> \alpha/2$, we have that
$$
\partial_t^l\delta_{right/left}^{m_1,m_2} e^{t\Delta_d}f(n)\to 0 ,\quad \text{ as } t\to\infty.
$$
\item[2.] If $f$ satisfies $\sum_{j\in\Z}\frac{|f(j)|}{1+|j|^2}<\infty$,  then, for every $n\in\Z,$ $m:=m_1+m_2$, with $m_1,m_2\in\N_0$ and $l\in\N_0$, such that ${m}+l\ge 1$, we have that
$$
\partial_y^l\delta_{right/left}^{m_1,m_2} e^{-y\sqrt{-\Delta_d}}f(n)\to 0, \quad \text{ as } y\to\infty.
$$
\end{itemize}

\end{lemma}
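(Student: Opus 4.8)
The plan is to handle the two parts separately but with the same philosophy: split each sequence $f$ into a ``bounded-near-the-origin'' piece and a ``tail'' piece, and control each convolution by moving difference/derivative operators onto the kernel, where the $\ell^1$-estimates of Lemmata \ref{kernelest}, \ref{Poissonest} (and Remark \ref{RemPoissonest}) already give the needed decay in $t$ (resp. $y$). For Part 1, fix $n$ and observe via Remark \ref{obs} that
$$
\partial_t^l\delta_{right/left}^{m_1,m_2} e^{t\Delta_d}f(n)=\partial_t^l\delta_{right/left}^{m_1,m_2}\bigl(e^{(t/2)\Delta_d}(e^{(t/2)\Delta_d}f)\bigr)(n),
$$
so that I may place $l$ time-derivatives and all $m$ spatial differences on the first factor $G(t/2,\cdot)$ and leave $e^{(t/2)\Delta_d}f$ in the second. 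A time-derivative of order $l$ is a spatial difference of order $2l$ up to the sign $(-1)^l$ (since $\partial_tG=\Delta_d G$), so the first factor has its $\ell^1$-norm bounded by $C\min\{1,(t/2)^{-(m/2+l)}\}\le C t^{-(m/2+l)}$ for $t>1$ by Lemma \ref{kernelest}. Meanwhile, by Lemma \ref{semigroupP}(i), $|e^{(t/2)\Delta_d}f(n-j)|\le C(1+|n-j|^\alpha+t^{\alpha/2})\le C(1+|n|^\alpha+|j|^\alpha+t^{\alpha/2})$. The term coming from $|j|^\alpha$ is the only delicate one; I would split the $j$-sum at $|j|\le\sqrt t$ and $|j|>\sqrt t$ exactly as in the proof of Lemma \ref{semigroupP}(ii), using $|j|^\alpha\le t^{\alpha/2}$ on the first range and the moment identity \eqref{polyI} (with the polynomial bound $p_k(t)\le Ct^k$ for $t>1$) together with the pointwise heat bounds of Remark \ref{remheatbounds} on the second, to get $\sum_j|\delta_{right/left}^{2l+m}G(t/2,j)|\,|j|^\alpha\le C\,t^{\alpha/2-(m/2+l)}$. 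Combining, $|\partial_t^l\delta^{m_1,m_2}_{right/left}e^{t\Delta_d}f(n)|\le C(1+|n|^\alpha)t^{-(m/2+l)}+Ct^{\alpha/2-(m/2+l)}$, which tends to $0$ as $t\to\infty$ precisely because $m/2+l>\alpha/2$.

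For Part 2, fix $n$ and write $f=f\chi_{\{|j|\le 2|n|+2\}}+f\chi_{\{|j|>2|n|+2\}}=:f_1+f_2$ as in the proof of Lemma \ref{semigroupP}B. For the tail $f_2$, on the support $|j|>2|n|+2$ one has $|n-j|\ge\tfrac12|j|$, so moving the operators onto the Poisson kernel and invoking Lemma \ref{Poissonlema}: part (ii) handles the pure-derivative case $m=0,\ l\ge1$, giving $|\partial_y^l P(y,n-j)|\le Cy^{1-l}|n-j|^{-2}\le Cy^{1-l}|j|^{-2}$; parts (iii)–(iv) (plus Remark \ref{RemPoissonest} and iteration of $\delta_{right/left}$, using that a composition of two spatial differences equals $\partial_y^2$ acting on $P$ by the Poisson equation) handle $m\ge1$, giving a bound of the form $Cy^{-l}|j|^{-2}$ or $Cy^{1-l}|j|^{-2}$. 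In either case $|\partial_y^l\delta^{m_1,m_2}_{right/left}e^{-y\sqrt{-\Delta_d}}f_2(n)|\le C\,y^{\,\text{(nonneg. power, or }1-l\text{)}}\sum_{|j|>2|n|+2}\tfrac{|f(j)|}{|j|^2}$; one checks the relevant power of $y$ is positive (when $m\ge1$, for $y>1$ a bound $y^{-l}$ with $l\ge0$ times $\min\{1,y^{-2}\}$-type gain from Remark \ref{RemPoissonest} suffices; when $l\ge1$ the factor $y^{1-l}$ is fine, and the borderline $l=1,m=0$ still gives a finite constant times a bounded function — but then $m+l\ge1$ forces us only into cases where some genuine decay is available, which I spell out below). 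For $f_1$, which lies in every $\ell^p(\Z)$, I use that $e^{-y\sqrt{-\Delta_d}}$ is generated by the generator of a $C_0$-semigroup, so $t\mapsto e^{t\Delta_d}f_1$ and hence $y\mapsto e^{-y\sqrt{-\Delta_d}}f_1$ decays to $0$ in $\ell^2(\Z)$ as $y\to\infty$ (the discrete Laplacian has spectrum $[-4,0]$ with no mass at $0$ after subtracting means — more simply, $\|e^{t\Delta_d}g\|_2\to0$ for $g\in\ell^2$ by the spectral theorem since $0$ is not an eigenvalue), and then applying the $\ell^1$-bounded operators $\delta^{m_1,m_2}_{right/left}$ and $\partial_y^l$ (Lemma \ref{Poissonest}, Remark \ref{RemPoissonest}) preserves $\ell^2\to0$, whence pointwise convergence to $0$ at $n$; one has to insert an extra factor $y^{-(m+l)}$ from the $\ell^1$-norms of the differentiated/differenced kernel to see the decay survives, which it does.

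\textbf{Main obstacle.} The genuinely delicate point is Part 1's tail estimate $\sum_{|j|>\sqrt t}|\delta^{2l+m}_{right/left}G(t/2,j)|\,|j|^\alpha\le Ct^{\alpha/2-(m/2+l)}$: one must feed the pointwise large-$n$ bound from Remark \ref{remheatbounds} (namely $|\delta^k_{right}G(t,n)|\le C\,G(t,n)(n+\tfrac12)^k t^{-k}$ for $n\gtrsim\sqrt t$) into the moment identities \eqref{polyI}, taking $k=2\lceil(2l+m)/2\rceil$ even so that $\sum_j G(t,j)|j|^{\alpha+k}$ is governed by $p_{\lceil(\alpha+k)/2\rceil}(2t)\le Ct^{\lceil(\alpha+k)/2\rceil}$, and then checking the exponents of $t$ really combine to something strictly below zero under the hypothesis $m/2+l>\alpha/2$. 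For Part 2 the only subtlety is bookkeeping: verifying that in every admissible case $(m_1,m_2,l)$ with $m+l\ge1$ the exponent of $y$ produced by Lemma \ref{Poissonlema}/Remark \ref{RemPoissonest} (and by the $\ell^1$-norms in Lemma \ref{Poissonest}) is strictly negative, which forces one to separate the pure-derivative subcase from the subcases involving at least one spatial difference and to use the Poisson equation $\partial_y^2P=\delta_{right/left}\delta_{right/left}P$ to trade pairs of differences for time-derivatives. Once these exponent computations are organized, the convergence statements are immediate.
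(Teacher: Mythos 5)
Part 1 of your plan is sound: converting time derivatives into second differences and controlling the $|j|^\alpha$ moment of $\delta^{2l+m}G$ exactly reproduces the estimate already contained in the proof of Lemma \ref{semigroupP} A(ii), so the bound $C(1+|n|^\alpha)t^{-(l+m/2)}+Ct^{\alpha/2-(l+m/2)}$ does follow and decays under $l+m/2>\alpha/2$. (The paper gets this more directly: since $\partial_t^l\delta^{m_1,m_2}_{right/left}e^{t\Delta_d}f(n)=\pm\,\delta_{right}^{q}e^{t\Delta_d}f(n')$ with $q=2l+m>\alpha$ and $n'$ a shift of $n$, Lemma \ref{semigroupP} A(ii) applies verbatim; your semigroup splitting is a correct but redundant detour.)

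Part 2, however, has a genuine gap at the borderline cases $l+m=1$, i.e.\ $(l,m)=(1,0)$ and $(0,1)$. With your fixed decomposition $f=f_1+f_2$ at $|j|\le 2|n|+2$, the tail estimate you invoke from Lemma \ref{Poissonlema} (ii) (resp.\ (iv)) gives $|\partial_y P(y,n-j)|\le C|n-j|^{-2}$ (resp.\ $|\delta_{right/left}P(y,n-j)|\le C|n-j|^{-2}$), with \emph{no decay in} $y$; your bound for the $f_2$ piece is then a fixed constant times $\sum_{|j|>2|n|+2}|f(j)|/|j|^2$, which is finite but does not tend to zero. You flag this ("which I spell out below") but never actually resolve it, and trading pairs of differences for $\partial_y^2$ does not help when only one derivative or one difference is present. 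Switching to estimate (i) (resp.\ (iii)) on the whole tail does not work either, since it only gives $|n-j|^{-1}$ (resp.\ no) spatial decay, which is not summable against the hypothesis $\sum_j|f(j)|/(1+|j|^2)<\infty$. The paper's proof fixes precisely this by making the spatial cut $y$-dependent: it splits the $j$-sum at $|j|\sim\sqrt y$, using (i) (resp.\ (iii)) on $|j|\le\sqrt y$, where the cost of upgrading $(1+|j|)^{-1}$ to $(1+|j|)^{-2}$ is only a factor $1+\sqrt y$ (resp.\ $1+y$) and the resulting power of $y$ is still negative, and using (ii) (resp.\ (iv)) on $|j|\ge\sqrt y$, where the tail of the convergent series $\sum_j|f(n-j)|/|j|^2$ itself tends to zero. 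Alternatively, your fixed-cutoff tail could be salvaged by dominated convergence (pointwise decay in $y$ from (i)/(iii), domination from (ii)/(iv)), but as written the argument does not close. The spectral-theorem treatment of $f_1$ is correct but unnecessary: for $l+m\ge1$ the $\ell^1$ kernel bounds of Lemma \ref{Poissonest} and Remark \ref{RemPoissonest} already give $Cy^{-(l+m)}\|f_1\|_\infty\to0$.
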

\begin{proof}
Suppose that $f$ satisfies $\frac{|f|}{1+|\cdot|^\alpha}\in\ell^\infty(\Z),$ for certain $\alpha>0$, and let 
 $n\in\Z$ and $m_1,m_2,l\in\N_0$  such that  $\frac{m}{2}+l> \alpha/2$. It follows easily that there exist $n'\in\Z$ ($n'$ is comparable to $n$) and $q=2l+m_1+m_2\in\N$ with $q>\alpha$ such that $$|\partial_t^l\delta_{right/left}^{m_1,m_2} e^{t\Delta_d}f(n)|=|\delta_{right}^{q} e^{t\Delta_d}f(n')|.$$ Then, it follows by Lemma \ref{semigroupP} A (ii) that  $\delta_{right}^{q} e^{t\Delta_d}f(n')\to 0,\quad t\to\infty.$

Now we prove 2. Suppose that $f$ satisfies $\sum_{j\in\Z}\frac{|f(j)|}{1+|j|^2}<\infty$ and let   $n\in\Z$ and $m_1,m_2,l\in\N_0$ such that ${m}+l\ge 1$. 

Suppose first  that  $m=0,$ and $l\in\N.$ 
By \eqref{Poissontrick} we have that, for every $y>0$ and $n\in\Z,$ 
\begin{align*}
|\partial_y^l e^{-y\sqrt{-\Delta_d}}f(n)|
&\le \biggl( \sum_{|j|\leq \sqrt{y}}+\sum_{|j|\geq \sqrt{y}} \biggr)|\partial_y^lP(y,j)||f(n-j)|dt=:A1+B1.
\end{align*}
Note that by Lemma \ref{Poissonlema} (i) we have that $$A1\leq \frac{C(1+\sqrt{y})}{y^l}\sum_{|j|\leq \sqrt{y}}\frac{1}{(1+|j|)^2}|f(n-j)|\to 0,\quad y\to\infty,$$ 
and by Lemma \ref{Poissonlema} (ii),
$$B1\leq \frac{C}{y^{l-1}}\sum_{|j|\geq \sqrt{y}}\frac{1}{|j|^2}|f(n-j)|\to 0,\quad y\to\infty.$$

Secondly, since $\delta_{right}^2e^{-y\sqrt{-\Delta_d}}f(n)=\Delta_de^{-y\sqrt{-\Delta_d}}f(n+1)=\partial_y^2 e^{-y\sqrt{-\Delta_d}}f(n+1)$, the case when $m$ is even follows from the previous one. 

Finally, it remains to prove that, for $l\in\N_0$ and $n\in\Z$, $\partial_y^l\delta_{right/left} e^{-y\sqrt{-\Delta_d}}f(n)\to 0$, as $y\to\infty$.
Let $l\in\N_0,$ $n\in\Z$ and $y>0.$ We write
\begin{align*}
|\partial_y^l\delta_{right} e^{-y\sqrt{-\Delta_d}}f(n)|&\le  \left(\sum_{|j|\le \sqrt{y}} +\sum_{|j|\ge \sqrt{y}}\right) |\partial_y^l\delta_{right}P(y,j)||f(n-j)|dt\\
&=A2+B2.
\end{align*}
On the one hand, by Lemma \ref{Poissonlema} (iii)  
$$A2\le \frac{C}{y^{l+2}} \sum_{|j|\le \sqrt{y}}  |f(n-j)|\le \frac{C(1+y)}{y^{l+2}} \sum_{|j|\le \sqrt{y}}\frac{1}{1+|j|^2}  |f(n-j)|\to 0,\quad y\to\infty,$$ and by Lemma \ref{Poissonlema} (iv) $$B2\le \frac{C}{y^{l}} \sum_{|j|\ge \sqrt{y}}  \frac{|f(n-j)|}{|j|^2}\to 0,\quad y\to\infty.$$

The case including $\delta_{left}$ is analogous.

\end{proof}

\begin{lemma}\label{subirk}
Let $f:\mathbb{Z}\to \R$ be a function.
\begin{itemize}
    \item [1.]Suppose that $f$ satisfies $\frac{|f|}{1+|\cdot|^\alpha}\in\ell^\infty(\Z),$ for certain $\alpha>0.$ For every  $k,l\in \N$ such that $l> k\ge [\alpha/2]+1$ and $t>0,$ the following are equivalent
\begin{itemize}
    \item[i)] $\|\partial_t^k e^{t\Delta_d}f\|_{\infty}\leq C t^{-k+\alpha/2}$.
    \item[ii)]$\|\partial_t^l e^{t\Delta_d}f\|_{\infty}\leq C t^{-l+\alpha/2}$.
\end{itemize}
    \item [2.]Suppose that $f$ satisfies $\sum_{j\in\Z}\frac{|f(j)|}{1+|j|^2}<\infty$.  For every  $p,q\in \N$ such that $p> q\ge [\alpha]+1$, $\alpha>0$ and $y>0,$ the following are equivalent
\begin{itemize}
    \item[i)] $\|\partial_y^q e^{-y\sqrt{-\Delta_d}}f\|_{\infty}\leq C y^{-p+\alpha}$.
    \item[ii)]$\|\partial_y^p e^{-y\sqrt{-\Delta_d}}f\|_{\infty}\leq C y^{-q+\alpha}$.
\end{itemize}
\end{itemize} 
\end{lemma}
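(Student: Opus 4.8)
The plan is to prove each equivalence by showing that the two estimates are "self-improving" in both directions, using the semigroup property together with the $\ell^1$-bounds for the time derivatives of the kernels proved in Lemma \ref{kernelest} (for the heat case) and Lemma \ref{Poissonest} (for the Poisson case). I will treat part 1 in detail; part 2 is entirely parallel, replacing $\partial_t$ by $\partial_y$, $G$ by $P$, the exponent $\alpha/2$ by $\alpha$, and Lemma \ref{kernelest} by Lemma \ref{Poissonest}(1). Note first that, by Remark \ref{obs}, one has the two "splitting" identities $\partial_t^k e^{t\Delta_d}f(n)=\sum_{j}G(t_1,j)\,\partial_{t_2}^{k}e^{t_2\Delta_d}f(n-j)$ and, more generally, $\partial_t^{l}e^{t\Delta_d}f(n)=\sum_j \partial_{t_1}^{l-k}G(t_1,j)\,\partial_{t_2}^k e^{t_2\Delta_d}f(n-j)$ whenever $t=t_1+t_2$; here the derivative $\partial_{t_1}^{l-k}G(t_1,\cdot)$ is, up to constants, the even difference $\delta_{right}^{2(l-k)}G(t_1,\cdot)$, so Lemma \ref{kernelest} gives $\|\partial_{t_1}^{l-k}G(t_1,\cdot)\|_1\le C\min\{1,t_1^{-(l-k)}\}$.

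\emph{Direction i) $\Rightarrow$ ii).} Assume $\|\partial_t^k e^{t\Delta_d}f\|_\infty\le C t^{-k+\alpha/2}$. Fix $t>0$, write $t=t/2+t/2$, and use the second identity above with $t_1=t_2=t/2$:
\begin{align*}
\|\partial_t^{l}e^{t\Delta_d}f\|_\infty
&\le \|\partial_{t_1}^{l-k}G(t/2,\cdot)\|_1\;\|\partial_{t}^{k}e^{t\Delta_d}f|_{t=t/2}\|_\infty\\
&\le C\,(t/2)^{-(l-k)}\cdot C\,(t/2)^{-k+\alpha/2}\le C\,t^{-l+\alpha/2},
\end{align*}
which is ii). (For small $t$ one uses $\min\{1,t_1^{-(l-k)}\}\le t_1^{-(l-k)}$; for large $t$ the same bound applies, so no case distinction is really needed.)

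\emph{Direction ii) $\Rightarrow$ i).} This is the direction that requires care, since "integrating back" a time derivative must be done without picking up an uncontrolled constant. The standard device is the fundamental theorem of calculus combined with the decay at infinity from Lemma \ref{decay}. Since $l-1\ge k\ge [\alpha/2]+1>\alpha/2$, Lemma \ref{decay}(1) (with $m=0$) gives $\partial_t^{l-1}e^{t\Delta_d}f(n)\to 0$ as $t\to\infty$, uniformly in $n$ — more precisely, one writes $\partial_t^{l-1}e^{t\Delta_d}f(n)=\sum_j \partial_{t_1}^{l-1-k}G(t/2,j)\partial_t^k e^{t\Delta_d}f(n-j)|_{t=t/2}$ and bounds the $\ell^1\times\ell^\infty$ product by $C(t/2)^{-(l-1-k)}(t/2)^{-k+\alpha/2}$, wait, that requires i); instead use ii) directly: $\|\partial_t^{l-1}e^{t\Delta_d}f\|_\infty\le \|\partial_{t_1}G(t/2,\cdot)\|_1\|\partial_t^{l}e^{t\Delta_d}f|_{t=t/2}\|_\infty\le C(t/2)^{-1}(t/2)^{-l+\alpha/2}\to 0$ as $t\to\infty$ since $l-1>\alpha/2$. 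Hence for any $t>0$,
\begin{align*}
\partial_t^{l-1}e^{t\Delta_d}f(n)=-\int_t^\infty \partial_s^{l}e^{s\Delta_d}f(n)\,ds,
\end{align*}
so $\|\partial_t^{l-1}e^{t\Delta_d}f\|_\infty\le C\int_t^\infty s^{-l+\alpha/2}\,ds=C\,t^{-(l-1)+\alpha/2}$, using $-l+\alpha/2<-1$. Iterating this integration step exactly $l-k$ times — at each stage the decay at infinity of the relevant derivative is guaranteed by the estimate just obtained together with $[\alpha/2]+1\le k$, which ensures all intermediate exponents $-j+\alpha/2$ with $j>k$ are $<-1$ so the integrals converge — we arrive at $\|\partial_t^{k}e^{t\Delta_d}f\|_\infty\le C\,t^{-k+\alpha/2}$, which is i).

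\emph{Part 2.} The argument is identical with $P(y,\cdot)$ in place of $G(t,\cdot)$: Remark \ref{obs} gives $\partial_y^{p}e^{-y\sqrt{-\Delta_d}}f=\sum_j \partial_{y_1}^{p-q}P(y_1,j)\,\partial_{y_2}^{q}e^{-y_2\sqrt{-\Delta_d}}f(n-j)$ for $y=y_1+y_2$, and Lemma \ref{Poissonest}(1) gives $\|\partial_{y_1}^{p-q}P(y_1,\cdot)\|_1\le C y_1^{-(p-q)}$, yielding i)$\Rightarrow$ii) verbatim. For ii)$\Rightarrow$i) one integrates back $p-q$ times, using Lemma \ref{decay}(2) to guarantee the required decay at infinity and the condition $q\ge[\alpha]+1>\alpha$ to ensure each exponent $-j+\alpha$ with $j>q$ is $<-1$ so the integrals converge. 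The main obstacle, as indicated, is organizing the ii)$\Rightarrow$i) direction so that the decay-at-infinity hypotheses of Lemma \ref{decay} are in force at every step of the backward integration; this is precisely why the hypotheses $k\ge[\alpha/2]+1$ and $q\ge[\alpha]+1$ (rather than merely $k,q$ large) are imposed.
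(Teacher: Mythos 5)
Your proof follows essentially the same route as the paper: for i)$\Rightarrow$ii) the semigroup splitting $t=t/2+t/2$ together with the $\ell^1$ kernel bounds of Lemma \ref{kernelest} (resp.\ Lemma \ref{Poissonest}), and for ii)$\Rightarrow$i) the fundamental theorem of calculus from $t$ to $\infty$ justified by the decay in Lemma \ref{decay}, iterated $l-k$ (resp.\ $p-q$) times; the paper does exactly this (it writes out only $l=k+1$ for the heat case and declares the rest analogous).

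One inline step of yours is, however, wrong and should be deleted: the ``instead use ii) directly'' inequality
$\|\partial_t^{l-1}e^{t\Delta_d}f\|_\infty\le \|\partial_{t_1}G(t/2,\cdot)\|_1\,\|\partial_t^{l}e^{t\Delta_d}f|_{t=t/2}\|_\infty$
is not a consequence of the semigroup splitting, since the splitting conserves the total number of time derivatives (it would produce $l+1$ derivatives, not $l-1$); you cannot lower the order of a derivative this way. Fortunately this digression is unnecessary: Lemma \ref{decay}(1) (with $m=0$) already gives, for each fixed $n$, $\partial_t^{j}e^{t\Delta_d}f(n)\to 0$ as $t\to\infty$ for every intermediate order $j\ge k>\alpha/2$, using only the growth hypothesis on $f$ and not i) or ii), and pointwise decay is all the FTC step needs (your claim of uniformity in $n$ is both unsupported by Lemma \ref{decay} and not required, since one takes the supremum over $n$ only after bounding the integral by $\int_t^\infty\|\partial_s^{j+1}e^{s\Delta_d}f\|_\infty\,ds$). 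With that digression removed, your argument, including the bookkeeping that all intermediate exponents are $<-1$ thanks to $k\ge[\alpha/2]+1$ and $q\ge[\alpha]+1$, is correct and coincides with the paper's proof.
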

\begin{proof}
We only do the proof for the heat kernel and the case $k=[\alpha/2]+1$ and $l=k+1.$ The rest of cases are analogous.

Suppose that $f$ satisfies i). Then, by the semigroup property and Lemma \ref{kernelest}, we have that
\begin{align*}
   |\partial_t^l e^{t\Delta_d}f(n)| &=C\left| \sum_{j\in\Z} \partial_u G(u,j)|_{u=t/2}\partial_v^k e^{v\Delta_d}f(n-j)|_{v=t/2}\right|\\
   &\le  C\|\partial_v^k e^{v\Delta_d}f|_{v=t/2}\|_{\infty}\sum_{j\in\Z}|\partial_u G_u(j)|_{u=t/2}|\le Ct^{-k+\alpha/2} t^{-1}=    Ct^{-l+\alpha/2}. 
\end{align*}
Conversely, suppose that ii) holds. Since for each $n\in\Z,$ $\partial_t^k e^{t\Delta_d}f(n)\rightarrow 0$ as $t\to \infty$, see Lemma \ref{decay}, we have that
$$
|\partial_t^k e^{t\Delta_d}f(n)|=\left|\int_t^\infty \partial_u^{k+1} e^{u\Delta_d}f(n)du\right|\le C t^{-k+\alpha/2}.
$$

\end{proof}

\begin{lemma}\label{cambioyx}Let $f:\Z\to\R$.
\begin{itemize}
    \item Suppose that $f\in \Lambda^\alpha_H$, for some $\alpha>0.$ Then, for every $l\in\N_0$ and $m\in\{1,2\}$ such that $\frac{m}{2}+l\ge [\alpha/2]+1$, we have that
$$
\|\partial_t^l\delta_{right/left}^{m_1,m_2} e^{t\Delta_d}f\|_{\infty}\leq C t^{-(l+\frac{m}{2})+\frac{\alpha}{2}},\qquad m_1,m_2\in\N_0, \: m_1+m_2=m,\ t>0.
$$
\item Suppose that  $f\in \Lambda^\alpha_P$, for some $\alpha>0.$ Then, for every $l\in\N_0$ and $m\in\{1,2\}$ such that ${m}+l\ge [\alpha]+1$, we have that
$$
\|\partial_y^l\delta_{right/left}^{m_1,m_2} e^{-y\sqrt{-\Delta_d}}f\|_{\infty}\leq C y^{-(l+{m})+\alpha},\qquad m_1,m_2\in\N_0, \: m_1+m_2=m,\ y>0.
$$
\end{itemize}

\end{lemma}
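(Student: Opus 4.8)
The plan is to reduce the mixed time-space estimate to the pure time estimate built into the definition of $\Lambda_H^\alpha$ (resp. $\Lambda_P^\alpha$), using the semigroup property to split the evolution parameter and absorbing the space differences into an $\ell^1$-bounded kernel factor. Concretely, for the heat case I would fix $l\in\N_0$ and $m\in\{1,2\}$ with $\frac m2+l\ge[\alpha/2]+1$, write $t=t_1+t_2$ with $t_1=t_2=t/2$, and use Remark \ref{obs} to move all the time derivatives onto the first factor and both space differences onto the second factor (or vice versa), so that
$$
\partial_t^l\delta_{right/left}^{m_1,m_2}e^{t\Delta_d}f(n)=\sum_{j\in\Z}\bigl(\partial_u^{l}G(u,j)\bigr)\big|_{u=t/2}\,\bigl(\delta_{right/left}^{m_1,m_2}e^{v\Delta_d}f(n-j)\bigr)\big|_{v=t/2}.
$$
The inner factor is $\delta_{right/left}^{m_1,m_2}$ applied to $e^{v\Delta_d}f$; since $m\in\{1,2\}$ and $\delta_{right}\delta_{left}f$ differs from $-\Delta_d f$ only by a shift in $n$, when $m=2$ this equals (up to translation) $\Delta_d e^{v\Delta_d}f=\partial_v e^{v\Delta_d}f$, so its sup-norm is controlled by $\|\partial_t^{l'}e^{t\Delta_d}f\|_\infty$ for an appropriate $l'$. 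That is exactly what the membership $f\in\Lambda_H^\alpha$ gives, provided $l'\ge[\alpha/2]+1$, which is guaranteed by the hypothesis $\frac m2+l\ge[\alpha/2]+1$ after possibly invoking Lemma \ref{subirk} to raise or match the number of time derivatives.

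The case $m=1$ needs a little more care because a single $\delta_{right}$ is not a function of $\Delta_d$. Here I would instead leave the space difference on the heat kernel: write
$$
\partial_t^l\delta_{right}e^{t\Delta_d}f(n)=\sum_{j\in\Z}\bigl(\delta_{right}\partial_u^{l-a}G(u,j)\bigr)\big|_{u=t/2}\,\bigl(\partial_v^{a}e^{v\Delta_d}f(n-j)\bigr)\big|_{v=t/2}
$$
for a suitable $a$ with $a\ge[\alpha/2]+1$ (take $a=l$ if $l\ge[\alpha/2]+1$; otherwise note $l=[\alpha/2]$ forces $m=2$, so this subcase does not arise for $m=1$, and if $l<[\alpha/2]$ one first bumps up via Lemma \ref{subirk}). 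Then $\|\partial_v^{a}e^{v\Delta_d}f\|_\infty\le C v^{-a+\alpha/2}$ by $f\in\Lambda_H^\alpha$, and it remains to bound $\sum_{j}|\delta_{right}\partial_u^{l-a}G(u,j)|$. Since $\partial_u^{l-a}$ of the heat kernel is a linear combination of even-order spatial differences $\delta_{right}^{2(l-a)}G$ (up to translation, by the heat equation), the quantity $\delta_{right}\partial_u^{l-a}G$ is a combination of odd-order differences $\delta_{right}^{2(l-a)+1}G$, whose $\ell^1$-norm is $\le C u^{-(2(l-a)+1)/2}=Cu^{-(l-a)-1/2}$ by Lemma \ref{kernelest}. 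Multiplying and substituting $u=v=t/2$ gives the bound $C t^{-a+\alpha/2}\,t^{-(l-a)-1/2}=C t^{-(l+1/2)+\alpha/2}=Ct^{-(l+\frac m2)+\frac\alpha2}$, as required. A cleaner uniform way to handle both $m=1$ and $m=2$ at once is to always put all $l$ time derivatives plus the $m$ space differences on the kernel factor, use Lemma \ref{kernelest} to get $\sum_j|\delta_{right}^{2l+m}G(t/2,j)|\le C t^{-(2l+m)/2}$, and bound the other factor simply by $\|e^{v\Delta_d}f\|_\infty\le C(1+|n|^\alpha+v^{\alpha/2})$ — but this fails because of the growing $|n|^\alpha$ term, so the split must genuinely keep at least $[\alpha/2]+1$ time derivatives on the evolved function; this is the point the argument must get right.

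For the Poisson case the scheme is identical, replacing Lemma \ref{kernelest} by Lemma \ref{Poissonest} and Remark \ref{RemPoissonest} for the $\ell^1$-estimates of $\delta_{right}^lP(y,\cdot)$ and $\partial_y^mP(y,\cdot)$, replacing Lemma \ref{subirk}(1) by Lemma \ref{subirk}(2), and using that $\delta_{right}\delta_{left}e^{-y\sqrt{-\Delta_d}}f$ equals (up to translation) $\Delta_d e^{-y\sqrt{-\Delta_d}}f=\partial_y^2 e^{-y\sqrt{-\Delta_d}}f$ to reduce even-order space differences to time derivatives, with the homogeneity exponent now $-l+\alpha$ instead of $-k+\alpha/2$. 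I expect the only genuinely delicate point — and the step to state carefully — to be the bookkeeping that ensures at every stage the factor carrying the growth of $f$ (i.e.\ $e^{t\Delta_d}f$ itself) has had at least $[\alpha/2]+1$ time derivatives (resp.\ $[\alpha]+1$ for Poisson) applied to it, so that the defining estimate of $\Lambda_H^\alpha$ (resp.\ $\Lambda_P^\alpha$) is applicable and produces a genuinely decaying, $n$-independent bound; Lemma \ref{subirk} is exactly the tool that lets one rearrange the derivative count to meet this threshold, and Remark \ref{obs} is what licenses moving derivatives and differences freely between the two convolution factors.
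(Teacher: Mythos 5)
Your heat-semigroup half is essentially the paper's own argument: place the $m$ spatial differences on the kernel factor and all the time derivatives on the evolved function, bound the kernel in $\ell^1$ by Lemma \ref{kernelest} (and Remark \ref{Remkernelest}) and the other factor by the defining estimate of $\Lambda_H^\alpha$ via Lemma \ref{subirk}. Your treatment of the borderline case $m=2$, $l=[\alpha/2]$, by converting $\delta_{right/left}^{m_1,m_2}$ (up to a shift and sign) into one extra $\partial_t$, is correct and in fact slightly simpler than the paper's device, which proves the estimate with $[\alpha/2]+1$ time derivatives and then integrates from $t$ to $\infty$ using the decay of Lemma \ref{decay}. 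One caveat of wording: your first displayed splitting, with $\partial_u^{l}G$ on the kernel and only the single derivative coming from $\delta^2=\partial_v$ on the function, is not usable when $\alpha\ge 2$, because below the threshold $[\alpha/2]+1$ the estimate $\|\partial_v e^{v\Delta_d}f\|_\infty\le Cv^{-1+\alpha/2}$ may simply fail (e.g.\ $f(n)=n^2\in\Lambda_H^\alpha$ for $2<\alpha<4$ has $\partial_v e^{v\Delta_d}f\equiv 2$, while $v^{-1+\alpha/2}\to 0$ as $v\to 0$), and Lemma \ref{subirk} only relates derivative counts that are both above the threshold; your final bookkeeping (all derivatives on the function factor) avoids this, so for the heat case this is not a real gap.

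The genuine gap is in the Poisson half, where the scheme is not ``identical''. For $m=1$ the hypothesis only gives $l\ge[\alpha]$, so the borderline case $l=[\alpha]$ genuinely occurs (for instance $l=0$, $m=1$ when $0<\alpha<1$, which is exactly the estimate used in the proof of Theorem \ref{th1}), unlike in the heat case where $m=1$ forces $l\ge[\alpha/2]+1$. In that case your splitting leaves only $[\alpha]$ derivatives $\partial_y$ on the evolved function, below the threshold $[\alpha]+1$ at which membership in $\Lambda_P^\alpha$ (plus Lemma \ref{subirk}) applies, and a single spatial difference cannot be converted into a time derivative, so none of the tools you list closes this case. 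The missing ingredient is the paper's integration device transported to the Poisson setting: first prove, by your splitting, that $\|\partial_y^{[\alpha]+1}\delta_{right/left}e^{-y\sqrt{-\Delta_d}}f\|_\infty\le Cy^{-([\alpha]+2)+\alpha}$, and then use Lemma \ref{decay} (part 2, applicable since $m+l\ge 1$) to write
\begin{align*}
\partial_y^{[\alpha]}\delta_{right/left}e^{-y\sqrt{-\Delta_d}}f(n)=-\int_y^\infty \partial_u^{[\alpha]+1}\delta_{right/left}e^{-u\sqrt{-\Delta_d}}f(n)\,du,
\end{align*}
which upon integrating the bound (convergent because $-([\alpha]+2)+\alpha<-1$) yields $Cy^{-([\alpha]+1)+\alpha}$, as required; the same argument also covers the remaining sub-threshold pairs $(m,l)=(2,[\alpha]-1),(2,[\alpha])$ if one prefers not to use the $\delta^2\to\partial_y^2$ conversion.
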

\begin{proof}
We only do the proof for the heat semigroup. For the Poisson is completely analogous.
Suppose that $f\in \Lambda^\alpha_H$, for some $\alpha>0.$ We consider first the case $l\ge [\alpha/2]+1$, $m\in \{1,2\}.$ From the semigroup property,  Lemmata \ref{kernelest}, \ref{subirk} and Remark \ref{Remkernelest}, we have that
\begin{align}\label{derivcomb}
    |\partial_t^l\delta_{right/left}^{m_1,m_2} e^{t\Delta_d}f(n)|&=C\left|\sum_{j\in\Z}\delta_{right/left}^{m_1,m_2}G(u,j)|_{u=t/2}\partial_{v}^le^{v\Delta_d}f(n-j)|_{v=t/2} \right|\nonumber\\
    &\le C\|\partial_{v}^le^{v\Delta_d}f|_{v=t/2} \|_\infty\left|\sum_{j\in\Z}\delta_{right/left}^{m_1,m_2}G(u,j)|_{u=t/2}\right|\nonumber\\
   & \le  C t^{-l+\alpha/2}t^{-m/2}.
\end{align}
Now assume that $l<[\alpha/2]+1$ and  $m\in \{1,2\}$ so that $\frac{m}{2}+l\ge [\alpha/2]+1$. Then, $l= [\alpha/2]$ and $m=2.$ Then by using \eqref{derivcomb} with $[\alpha/2]+1$ derivatives in the variable $t$ and the fact that $\partial_t^{[\alpha/2]}\delta_{right/left}^{m_1,m_2} e^{t\Delta_d}f(n)\to 0$ as $t\to \infty$  for each $n\in\Z$ (see Lemma \ref{decay}), we get that 
\begin{align*}
 |\partial_t^l\delta_{right/left}^{m_1,m_2} e^{t\Delta_d}f(n)|&=\left|\partial_t^{[\alpha/2]}\delta_{right/left}^{m_1,m_2} e^{t\Delta_d}f(n)\right|\\
 &=\left|\int_t^\infty \partial_u^{[\alpha/2]+1}\delta_{right/left}^{m_1,m_2} e^{t\Delta_d}f(n)du\right|\le  C t^{-l+\alpha/2}t^{-m/2}.
\end{align*}

\end{proof}

 \section{Proof of  Theorem \ref{caractodos}}\label{caractodo}
 \setcounter{theorem}{0}
\setcounter{equation}{0}

In this section we  are going to prove  Theorem \ref{caractodos}. For that aim we need to prove some results that are important to understand the classes $C^\alpha(\Z),$ $\Lambda^\alpha_H,$ and $\Lambda_P^\alpha.$

\begin{lemma}\label{sizeCalpha}
Let $\alpha>0$, $\alpha\not\in\N,$ and $f\in C^\alpha(\Z)$. Then, there exists a constant $C>0$  such that
$$|f(n)|\le C(1+|n|^\alpha), \quad n\in\Z.
$$
\end{lemma}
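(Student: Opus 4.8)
The plan is to reduce the claim to a statement about the $\delta_{right}$-differences of $f$ and then iterate, exploiting the definition of $C^\alpha(\Z)$ together with the finiteness of the relevant seminorms. Write $\alpha = k+\beta$ with $k=[\alpha]\in\N_0$ and $0<\beta<1$. First I would treat the base case $0<\alpha<1$ (so $k=0$): here the defining property gives, for every $n\in\Z$,
\[
|f(n)| \le |f(n)-f(0)| + |f(0)| \le \Big(\sup_{m\neq m'}\frac{|f(m)-f(m')|}{|m-m'|^\alpha}\Big)|n|^\alpha + |f(0)| \le C(1+|n|^\alpha),
\]
which is exactly the assertion with $C$ depending on $f$ through its seminorm and the single value $|f(0)|$.

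For the inductive step, suppose $\alpha=k+\beta$ with $k\ge 1$ and assume the statement holds for all exponents in $(0,\alpha)$ that are not integers; in particular it holds for $\alpha-1 = (k-1)+\beta$. The key observation is that $g:=\delta_{right}f$ belongs to $C^{\alpha-1}(\Z)$: indeed, applying the definition of $C^\alpha(\Z)$ with the choice $l=1$, $s=0$ (allowed since $l+s=1\le k$ can be completed to $l+s=k$ by further differencing, and the seminorm condition holds for every admissible pair) shows that the $C^{\alpha-1}(\Z)$-seminorms of $\delta_{right}f$ are all finite, since $\delta_{right/left}^{l',s'}(\delta_{right}f) = \delta_{right/left}^{l'+1,s'}f$ or $\delta_{right/left}^{l',s'+1}f$ for $l'+s'=k-1$. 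Hence by the inductive hypothesis there is $C>0$ with $|\delta_{right}f(n)|\le C(1+|n|^{\alpha-1})$ for all $n\in\Z$. Then for $n\ge 1$ I would telescope:
\[
|f(n)| \le |f(0)| + \sum_{j=0}^{n-1}|f(j+1)-f(j)| = |f(0)| + \sum_{j=0}^{n-1}|\delta_{right}f(j)| \le |f(0)| + C\sum_{j=0}^{n-1}(1+j^{\alpha-1}) \le C'(1+n^\alpha),
\]
using $\sum_{j=0}^{n-1} j^{\alpha-1} \le C n^\alpha$ (comparison with $\int_0^n x^{\alpha-1}\,dx$). For $n\le -1$ the same telescoping runs through $\delta_{left}f$, which lies in $C^{\alpha-1}(\Z)$ by the symmetric argument, giving $|f(n)|\le C'(1+|n|^\alpha)$; combining the two ranges and $n=0$ yields the claim.

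The only mildly delicate point is bookkeeping with the admissible index pairs $(l,s)$ in the definition of $C^\alpha(\Z)$: one must check that differencing once from the right (or left) genuinely lands $f$ in $C^{\alpha-1}(\Z)$ in the full sense of having \emph{all} its $C^{\alpha-1}(\Z)$-seminorms finite, not just one. This is immediate because every combination $\delta_{right/left}^{l',s'}$ with $l'+s'=k-1$ applied to $\delta_{right}f$ produces some $\delta_{right/left}^{l,s}f$ with $l+s=k$, and commutativity of $\delta_{right}$ and $\delta_{left}$ (noted in the text) means the order is irrelevant. No other obstacle arises; the estimate $\sum_{j=0}^{n-1} j^{\alpha-1}\le Cn^\alpha$ is elementary. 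Thus an induction on $k=[\alpha]$ completes the proof.
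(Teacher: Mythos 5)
Your argument is correct and is essentially the paper's own proof: the same base case $0<\alpha<1$ via the seminorm and the value $|f(0)|$, followed by the observation that $\delta_{right/left}f\in C^{\alpha-1}(\Z)$ and a telescoping sum, iterated over $k=[\alpha]$ (the paper treats $1<\alpha<2$ explicitly and then iterates, which is the induction you spell out). The extra bookkeeping remark about the index pairs $(l,s)$ is a fine, if unnecessary, elaboration of the same definitional point.
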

\begin{proof}
Assume first that $0<\alpha<1$. Then, $|f(n)|\le |f(n)-f(0)|+|f(0)|\le C (1+|n|^\alpha)$.

Now, assume that $1<\alpha<2$. By definition, this means that $\delta_{right/left}f(n)\in C^{\alpha-1}(\Z)$ and, from the previous case, we have that
$$
|\delta_{right/left}f(n)|\le C(1+|n|^{\alpha-1}).
$$
Therefore, for $n\in\N$,
\begin{align*}
    |f(n)|&\le |f(n)-f(n-1)|+\dots+|f(1)-f(0)|+|f(0)|=\sum_{j=1}^n|\delta_{left}f(j)|+|f(0)|\\
    &\le  C\; n(1+|n|^{\alpha-1})+|f(0)|\le C(1+|n|^\alpha).
\end{align*}
Similarly, for $n\in\Z_-=\Z\setminus\N_0$,
\begin{align*}
    |f(n)|&\le |f(n)-f(n+1)|+\dots+|f(-1)-f(0)|+|f(0)|=\sum_{j=n}^{-1}|\delta_{right}f(j)|+|f(0)|\\
    &\le  C|n|(1+|n|^{\alpha-1})+|f(0)|\le C(1+|n|^\alpha).
\end{align*}
By iterating the previous arguments we get the result for $\alpha>2.$

\end{proof}
The following theorem was proved in \cite[Theorem 4.1]{DL-CT1} for the Hermite operator and \cite[Theorem 5.6]{DL-CT2} for general Schr\"odinger operators satisfying a reverse H\"older inequality. The proof for the discrete Laplacian Lipschitz spaces is the same, so we omit the details.
\begin{theorem}\label{calor-Poisson}
Let $\alpha>0$ and $f:\Z\to\R$ such that $\sum_{j\in\Z}\frac{|f(j)|}{1+|j|^2}<\infty$. If $f\in\Lambda_H^{\alpha}$, then $f\in\Lambda_P^{\alpha}$.
\end{theorem}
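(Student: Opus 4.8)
The strategy is the classical subordination argument of Stein and Taibleson adapted to the discrete setting, exactly as in \cite[Theorem~4.1]{DL-CT1} and \cite[Theorem~5.6]{DL-CT2}. Since membership in $\Lambda_P^{\alpha}$ only requires the single estimate $\|\partial_y^{l}e^{-y\sqrt{-\Delta_d}}f\|_{\infty}\le \widetilde{C_\alpha}\,y^{\alpha-l}$ with $l=[\alpha]+1$ (the condition $\sum_{j}|f(j)|/(1+|j|^2)<\infty$ being already assumed, and the semigroup being well defined by Lemma~\ref{semigroupP}.B), it suffices to control this derivative. Set $k:=[\alpha/2]+1$. Writing $\alpha=2q+\rho$ with $q=[\alpha/2]$ and $\rho\in[0,2)$ one has $[\alpha]\in\{2q,2q+1\}$, hence $l=[\alpha]+1\in\{2k-1,2k\}$. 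The plan is to first prove the clean bound $\|\partial_y^{2k}e^{-y\sqrt{-\Delta_d}}f\|_{\infty}\le C\,y^{\alpha-2k}$ and then deduce the required estimate in both cases.

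For the even derivative of order $2k$ I would differentiate the subordination formula
\[
e^{-y\sqrt{-\Delta_d}}f(n)=\int_0^{\infty} q(y,t)\,e^{t\Delta_d}f(n)\,dt,\qquad q(y,t):=\frac{y}{2\sqrt{\pi}}\,\frac{e^{-y^2/(4t)}}{t^{3/2}},
\]
under the integral sign. Since $q$ satisfies $\partial_y^{2}q=\partial_t q$, one has $\partial_y^{2k}q=\partial_t^{k}q$, and integrating by parts $k$ times in $t$ --- the boundary terms vanish because $k=[\alpha/2]+1>\alpha/2-1/2$, which makes $\partial_t^{j}q(y,t)\,\partial_t^{k-1-j}e^{t\Delta_d}f(n)$ tend to $0$ as $t\to 0^{+}$ and $t\to\infty$, using the growth $|e^{t\Delta_d}f(n)|\le C(1+|n|^{\alpha}+t^{\alpha/2})$ of Lemma~\ref{semigroupP}.A(i) --- one gets
\[
\partial_y^{2k}e^{-y\sqrt{-\Delta_d}}f(n)=(-1)^{k}\int_0^{\infty} q(y,t)\,\partial_t^{k}e^{t\Delta_d}f(n)\,dt .
\]
Now $f\in\Lambda_H^{\alpha}$ gives $|\partial_t^{k}e^{t\Delta_d}f(n)|\le C_\alpha\, t^{-k+\alpha/2}$, so one is reduced to the scalar integral $y\int_0^{\infty}e^{-y^2/(4t)}t^{-3/2-k+\alpha/2}\,dt$; the substitution $t=y^2/(4s)$ turns it into $c\,y^{\alpha-2k}\int_0^{\infty}e^{-s}s^{k-\alpha/2-1/2}\,ds=c\,\Gamma\big(k-\alpha/2+1/2\big)\,y^{\alpha-2k}$, the Gamma integral being finite exactly because $k>\alpha/2-1/2$. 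This gives $\|\partial_y^{2k}e^{-y\sqrt{-\Delta_d}}f\|_{\infty}\le C\,y^{\alpha-2k}$, which already settles the case $l=2k$.

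When $l=[\alpha]+1=2k-1$, I would use that $\partial_y^{2k-1}e^{-y\sqrt{-\Delta_d}}f(n)\to 0$ as $y\to\infty$ (Lemma~\ref{decay}, part~2), so that
\[
\partial_y^{2k-1}e^{-y\sqrt{-\Delta_d}}f(n)=-\int_y^{\infty}\partial_s^{2k}e^{-s\sqrt{-\Delta_d}}f(n)\,ds ,
\]
and integrate the bound just obtained: since $\alpha<[\alpha]+1=2k-1$, the integral $\int_y^{\infty}s^{\alpha-2k}\,ds$ converges and equals a constant times $y^{\alpha-2k+1}=y^{\alpha-l}$. Hence $\|\partial_y^{l}e^{-y\sqrt{-\Delta_d}}f\|_{\infty}\le C\,y^{\alpha-l}$ in this case too, and therefore $f\in\Lambda_P^{\alpha}$.

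The scalar computations are routine. The one point that deserves care is the legitimacy of differentiating the subordination integral in $y$ and of the integrations by parts in $t$, i.e.\ the absolute convergence of all the intermediate integrals and the vanishing of the boundary terms; this is where the hypothesis $\sum_{j}|f(j)|/(1+|j|^2)<\infty$ and the polynomial growth estimates of Lemma~\ref{semigroupP}.A (together with Remark~\ref{obs}) enter. Once these are in place the argument is identical to those in \cite{DL-CT1,DL-CT2}, which is why the statement is given without a full proof.
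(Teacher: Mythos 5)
Your argument is correct and is essentially the approach the paper intends: the paper omits the proof precisely because it is the standard subordination argument of \cite{DL-CT1,DL-CT2}, namely transferring the defining heat estimate of order $k=[\alpha/2]+1$ through the kernel $q(y,t)$ (using $\partial_y^2 q=\partial_t q$ and a Gamma-integral computation), with the odd case $l=[\alpha]+1=2k-1$ recovered by integrating from infinity via Lemma \ref{decay}. The only small imprecision is that for the vanishing of the boundary terms in your integrations by parts (and for differentiating under the integral) you need the bounds on the intermediate derivatives $\partial_t^{m}e^{t\Delta_d}f=\Delta_d^{m}e^{t\Delta_d}f$, i.e.\ Lemma \ref{semigroupP} A(ii) together with Remark \ref{obs}, not just A(i); these are available in the paper, so the proof goes through as written.
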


\begin{theorem}\label{th1}For $0<\alpha<1$, $C^{\alpha}(\Z)=\Lambda_H^{\alpha}=\Lambda_P^{\alpha}$.
\end{theorem}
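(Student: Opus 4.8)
The plan is to prove the cycle of inclusions $C^\alpha(\Z)\subseteq\Lambda_H^\alpha\subseteq\Lambda_P^\alpha\subseteq C^\alpha(\Z)$, which gives the three identities simultaneously. Since $0<\alpha<1$ we have $[\alpha/2]+1=1$ and $[\alpha]+1=1$, so the defining conditions of $\Lambda_H^\alpha$ and $\Lambda_P^\alpha$ only involve the first $t$-derivative of the heat semigroup and the first $y$-derivative of the Poisson semigroup, respectively.

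\textbf{Step 1: $C^\alpha(\Z)\subseteq\Lambda_H^\alpha$.} Let $f\in C^\alpha(\Z)$. By Lemma~\ref{sizeCalpha}, $|f(n)|\le C(1+|n|^\alpha)$, so $f/(1+|\cdot|^\alpha)\in\ell^\infty(\Z)$ and, by Lemma~\ref{semigroupP}(A) and Remark~\ref{obs}, $\partial_t e^{t\Delta_d}f$ is well defined. Since $\sum_{j\in\Z}\partial_tG(t,j)=\partial_t\big(\sum_{j\in\Z}G(t,j)\big)=0$, we may subtract $f(n)$ and write
$$\partial_t e^{t\Delta_d}f(n)=\sum_{j\in\Z}\partial_tG(t,j)\,\big(f(n-j)-f(n)\big),$$
whence $|\partial_t e^{t\Delta_d}f(n)|\le C\sum_{j\in\Z}|\partial_tG(t,j)|\,|j|^\alpha$. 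Using $\partial_tG(t,j)=\Delta_dG(t,j)=\delta_{right}^2G(t,j-1)$, the matter reduces to the weighted kernel estimate $\sum_{j\in\Z}|\delta_{right}^2G(t,j)|\,|j|^\alpha\le C\,t^{\alpha/2-1}$ for $t>0$. This I would prove by splitting at $|j|\sim\sqrt t$: on $|j|\le\sqrt t$ bound $|j|^\alpha\le t^{\alpha/2}$ and apply Lemma~\ref{kernelest}; on $|j|>\sqrt t$ use the pointwise bound of Lemma~\ref{heatbounds} (Remark~\ref{remheatbounds}) to dominate the sum by $t^{-2}\sum_{j\in\Z}G(t,j)|j|^{2+\alpha}$, and control the fractional moment by Hölder's inequality against the probability measure $G(t,\cdot)$,
$$\sum_{j\in\Z}G(t,j)|j|^{2+\alpha}\le\Big(\sum_{j\in\Z}G(t,j)|j|^2\Big)^{1-\alpha/2}\Big(\sum_{j\in\Z}G(t,j)|j|^4\Big)^{\alpha/2}\le C\,t^{1+\alpha/2},$$
where the integer moments are the polynomials in \eqref{polyI}. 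This yields $\|\partial_t e^{t\Delta_d}f\|_\infty\le C\,t^{\alpha/2-1}$, i.e. $f\in\Lambda_H^\alpha$.

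\textbf{Step 2: $\Lambda_H^\alpha\subseteq\Lambda_P^\alpha$.} If $f\in\Lambda_H^\alpha$ then $f/(1+|\cdot|^\alpha)\in\ell^\infty(\Z)$, and since $\alpha<2$ we get $\sum_{j\in\Z}|f(j)|/(1+|j|^2)\le C\sum_{j\in\Z}(1+|j|^\alpha)/(1+|j|^2)<\infty$; hence Theorem~\ref{calor-Poisson} applies and $f\in\Lambda_P^\alpha$.

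\textbf{Step 3: $\Lambda_P^\alpha\subseteq C^\alpha(\Z)$.} Let $f\in\Lambda_P^\alpha$ and fix integers $n>m$; set $\delta:=n-m\in\N$. By Lemma~\ref{semigroupP}(B), $e^{-y\sqrt{-\Delta_d}}f(\cdot)\to f(\cdot)$ pointwise as $y\to0^+$, so
$$f(n)-f(m)=\big(f(n)-e^{-\delta\sqrt{-\Delta_d}}f(n)\big)+\big(e^{-\delta\sqrt{-\Delta_d}}f(n)-e^{-\delta\sqrt{-\Delta_d}}f(m)\big)+\big(e^{-\delta\sqrt{-\Delta_d}}f(m)-f(m)\big).$$
For the first (and, symmetrically, the third) term, $f(n)-e^{-\delta\sqrt{-\Delta_d}}f(n)=-\int_0^\delta\partial_s e^{-s\sqrt{-\Delta_d}}f(n)\,ds$, and the defining bound $\|\partial_s e^{-s\sqrt{-\Delta_d}}f\|_\infty\le C\,s^{\alpha-1}$ (integrable at $0$ as $\alpha>0$) gives $|f(n)-e^{-\delta\sqrt{-\Delta_d}}f(n)|\le (C/\alpha)\,\delta^\alpha$. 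For the middle term, telescoping over the $\delta$ lattice points between $m$ and $n$,
$$e^{-\delta\sqrt{-\Delta_d}}f(n)-e^{-\delta\sqrt{-\Delta_d}}f(m)=-\sum_{k=m}^{n-1}\delta_{right}\,e^{-\delta\sqrt{-\Delta_d}}f(k),$$
so this term is at most $\delta\,\|\delta_{right}e^{-\delta\sqrt{-\Delta_d}}f\|_\infty\le C\,\delta^\alpha$, the last inequality being Lemma~\ref{cambioyx} applied with $l=0$ and $m=1$ (legitimate since $m+l=1\ge[\alpha]+1$). Adding the three estimates gives $|f(n)-f(m)|\le C\,\delta^\alpha=C\,|n-m|^\alpha$ with $C$ independent of $n,m$; hence $f\in C^\alpha(\Z)$.

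\textbf{Main obstacle.} Steps 2 and 3 are short, resting on Theorem~\ref{calor-Poisson} and on the ``transfer from $\partial_y$ to $\delta_{right}$'' provided by Lemma~\ref{cambioyx}. The one genuinely computational point is the fractional‑order weighted heat‑kernel moment bound $\sum_{j}|\delta_{right}^2G(t,j)|\,|j|^\alpha\le C\,t^{\alpha/2-1}$ in Step 1: the non‑integer exponent $\alpha$ is not reachable from \eqref{polyI} directly and forces the Hölder interpolation between the integer moments of order $2$ and $4$, together with a careful separation of the regimes $t\le1$ and $t>1$.
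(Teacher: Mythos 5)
Your proof is correct and is essentially the paper's own argument: the same cycle $C^\alpha(\Z)\subseteq\Lambda_H^\alpha\subseteq\Lambda_P^\alpha\subseteq C^\alpha(\Z)$, with Step 2 via Theorem \ref{calor-Poisson}, Step 3 via the choice $y=|n-m|$, Lemma \ref{semigroupP} B and Lemma \ref{cambioyx}, and Step 1 reduced, exactly as in the paper, to the weighted kernel bound $\sum_{j}|\delta_{right}^2G(t,j)|\,|j|^\alpha\le C t^{\alpha/2-1}$. The only deviation is cosmetic: for $|j|>\sqrt{t}$ you interpolate the fractional moment by H\"older between the second and fourth integer moments from \eqref{polyI}, whereas the paper absorbs $|j|^\alpha$ into powers of $(j-1)$ and $t$ and invokes $p_2(2t)$ directly; just be aware that your displayed estimate $\sum_{j}G(t,j)|j|^{2+\alpha}\le Ct^{1+\alpha/2}$ holds only for $t\ge 1$ (for $t\le1$ the left-hand side is comparable to $t$), so the regime $t\le 1$ must indeed be dispatched separately --- trivially, since there the whole weighted sum is $O(1)$ while the target $t^{\alpha/2-1}\ge 1$ --- as your closing remark anticipates.
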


\begin{proof}
Let $f\in C^{\alpha}(\Z)$, $0<\alpha<1$. From Lemma \ref{sizeCalpha} we have that $\frac{|f|}{1+|\cdot|^\alpha}\in\ell^\infty(\Z).$


Since  the total mass $\sum_{j\in\Z}G(t,j)=1$, we can write  \begin{align*}
|\partial_t e^{t\Delta_d}f(n)|
&=\left|\sum_{j\in\Z}\partial_t G(t,n-j)(f(j)-f(n))\right|\le C\sum_{j\in\Z}|\partial_t G(t,j)||j|^\alpha.
\end{align*}

{
Since $\partial_t G(t,j)=\Delta_d G(t,j)=\delta_{right}^2G(t,j-1)$, and $\delta_{right}^2G(t,j-1)=\delta_{right}^2G(t,|j|-1)$ for $j\leq -1,$ we can write for every $t>0,$
\begin{align*}
    \sum_{j\in\Z}|\partial_t G(t,j)||j|^\alpha&=2\sum_{j\geq 1}|\delta_{right}^2G(t,j-1)||j|^\alpha=2\biggl(\sum_{1\leq j\leq \sqrt{t}}+\sum_{j> \sqrt{t}}\biggr)|\delta_{right}^2G(t,j-1)||j|^\alpha\\
    &\leq C\biggl(t^{-1+\alpha/2}+\sum_{j> \sqrt{t}}|\delta_{right}^2G(t,j-1)||j|^\alpha\biggr),
\end{align*}
where in the last inequality we have applied Lemma  \ref{kernelest}.
Assume first that $t\leq 1.$ Then, $j>\sqrt{t}$ if and only if $j\geq 1,$ so we have, by \eqref{polyI}, that $$\sum_{j\geq  1}|\delta_{right}^2G(t,j-1)||j|^\alpha\leq C\sum_{j\geq 0}G(t,j)(j+1)^2\leq C(1+p_1(2t))\leq Ct^{-1+\alpha/2}.$$
Now assume that $t>1$. If $j>\sqrt{t}$, then $j\geq 2$ and therefore, $j\le 2(j-1)$. Thus, by using Lemma \ref{heatbounds}, the fact that $G(t,j)$ is decreasing in $j\in\N_0$   and \eqref{polyI}, we get that
\begin{align*}
\sum_{j>\sqrt{t}}|\delta_{right}^2G(t,j-1)||j|^\alpha&\leq\frac{C}{t}\sum_{j>\sqrt{t}}G(t,j-1)\biggl(\frac{(j-1/2)^2}{t}+1\biggr)|j|^{\alpha}\\
&\leq\frac{C}{t^{3-\alpha/2}}\sum_{j\geq 2}G(t,j-1)|j-1|^{4}\leq C\frac{p_2(2t)}{t^{3-\alpha/2}}\leq Ct^{-1+\alpha/2}.
\end{align*}
}

Since for $0<\alpha<1$ a function such that $\frac{|f|}{1+|\cdot|^\alpha}\in\ell^\infty(\Z)$ also satisfies $\sum_{j\in\Z}\frac{|f(j)|}{1+|j|^2}<\infty$, from Theorem \ref{calor-Poisson} we know that $\Lambda_H^{\alpha}\subseteq\Lambda_P^{\alpha}$.

Now we prove that $\Lambda_P^{\alpha}\subseteq C^\alpha(\Z)$.  Let $f\in \Lambda_P^{\alpha}$ and $n\neq m$ integer numbers. We assume without loss of generality that $m>n$. We fix $y=|n-m|>0.$ Then 
\begin{align*}
|f(n)-f(m)|&\leq |f(n)-e^{-y\sqrt{-\Delta_d}} f(n)|+|e^{-y\sqrt{-\Delta_d}}f(n)-e^{-y\sqrt{-\Delta_d}} f(m)|+|e^{-y\sqrt{-\Delta_d}} f(m)-f(m)|\\
&=(I)+(II)+(III).
\end{align*}
From Lemma \ref{semigroupP} B and the hypothesis, we have that $$
(I)= \left|\int_0^y\partial_u e^{-u\sqrt{-\Delta_d}}f(n)\,du\right|\leq C \int_0^y u^{-1+\alpha}\,du=C y^{\alpha}=C|n-m|^{\alpha}.$$ 
The same computation works for $(III).$

On the other hand, by  using Lemma \ref{cambioyx}, we get that
\begin{align*}
|e^{-y\sqrt{-\Delta_d}}f(n)-e^{-y\sqrt{-\Delta_d}} f(m)|&\leq |n-m|\sup_{n'\in [n,m-1]}\left|\delta_{right}e^{-y\sqrt{-\Delta_d}}f(n')\right|\\&\le C|n-m|y^{-1+\alpha}=C|n-m|^\alpha.
\end{align*}
 We conclude that $f\in C^{\alpha}(\Z).$

\end{proof}

\begin{theorem} \label{equiless2}Let $0<\alpha<2$ and $f:\Z\to \C$ be a function such that $\frac{f}{1+|\cdot|^\alpha}\in\ell^\infty(\Z)$ and $\sum_{j\in\Z}\frac{|f(j)|}{1+|j|^2}<\infty$. The following are equivalent:
\begin{itemize}
    \item [(1)]$f\in \Lambda^\alpha_H$. 
        \item [(2)]$f\in \Lambda^\alpha_P$.
        \item [(3)] $f$ satisfies \begin{equation}\label{dif2}
    \sup_{n\neq 0}\frac{\|f(\cdot+n)+f(\cdot-n)-2f(\cdot)\|_\infty}{|n|^\alpha}<\infty.
\end{equation}
\end{itemize} 


\end{theorem}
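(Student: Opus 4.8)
\textbf{Proof proposal for Theorem \ref{equiless2}.}

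The plan is to prove the chain of implications $(1)\Rightarrow(2)\Rightarrow(3)\Rightarrow(1)$, using the results already established. The implication $(1)\Rightarrow(2)$ is immediate from Theorem \ref{calor-Poisson}, since the hypotheses on $f$ are exactly those required there. For $(2)\Rightarrow(3)$, fix $n\neq 0$ and set $y=|n|$. Writing the second difference in $x$ as a telescoping sum through the Poisson semigroup at height $y$,
\begin{align*}
f(x+n)+f(x-n)-2f(x)&=\big(f(x+n)-e^{-y\sqrt{-\Delta_d}}f(x+n)\big)+\big(f(x-n)-e^{-y\sqrt{-\Delta_d}}f(x-n)\big)\\
&\quad-2\big(f(x)-e^{-y\sqrt{-\Delta_d}}f(x)\big)\\
&\quad+\big(e^{-y\sqrt{-\Delta_d}}f(x+n)+e^{-y\sqrt{-\Delta_d}}f(x-n)-2e^{-y\sqrt{-\Delta_d}}f(x)\big),
\end{align*}
one controls the first three terms by $\int_0^y\|\partial_u e^{-u\sqrt{-\Delta_d}}f\|_\infty\,du\le C\int_0^y u^{-1+\alpha}\,du=Cy^\alpha$ using Lemma \ref{semigroupP} B and the defining estimate of $\Lambda^\alpha_P$ with $l=[\alpha]+1\in\{1,2\}$ integrated down (via Lemma \ref{decay}, part 2, to kill the boundary term at infinity). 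The last bracket equals $\delta_{right/left}^{1,1}$ applied $|n|$ times in a telescoping fashion, or more efficiently, it is bounded by $|n|^2\sup\|\delta_{right}\delta_{left}e^{-y\sqrt{-\Delta_d}}f\|_\infty$, and Lemma \ref{cambioyx} (second bullet, with $m=2$, $l=0$, valid once we check $m+l=2\ge[\alpha]+1$, which holds for $0<\alpha<2$) gives this is $\le C|n|^2 y^{-2+\alpha}=C|n|^\alpha$. Summing, $\|f(\cdot+n)+f(\cdot-n)-2f(\cdot)\|_\infty\le C|n|^\alpha$.

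For $(3)\Rightarrow(1)$, the strategy is to estimate $\partial_t^k e^{t\Delta_d}f$ with $k=[\alpha/2]+1$ (so $k=1$ for $0<\alpha<2$) directly against the second-difference condition \eqref{dif2}. Using that $\sum_j G(t,j)=1$ and that $\sum_j jG(t,j)=0$ (by \eqref{der1}, so the kernel ``sees'' only second differences), one symmetrizes:
\begin{align*}
\partial_t e^{t\Delta_d}f(n)=\sum_{j\in\Z}\partial_t G(t,j)f(n-j)=\tfrac12\sum_{j\in\Z}\partial_t G(t,j)\big(f(n-j)+f(n+j)-2f(n)\big),
\end{align*}
valid because $\partial_t G(t,\cdot)$ is even in $j$ and has zero total mass (differentiate $\sum_j G(t,j)=1$). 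Then \eqref{dif2} gives $|\partial_t e^{t\Delta_d}f(n)|\le C\sum_{j\in\Z}|\partial_t G(t,j)|\,|j|^\alpha$, and this sum is $\le Ct^{-1+\alpha/2}$ by exactly the computation already carried out in the proof of Theorem \ref{th1} (splitting $|j|\le\sqrt t$ and $|j|>\sqrt t$, using Lemma \ref{kernelest} on the first part and Lemma \ref{heatbounds} together with \eqref{polyI} on the tail, and treating $t\le 1$ and $t>1$ separately). Since the $\ell^\infty$ bound is uniform in $n$, this is precisely the statement $f\in\Lambda^\alpha_H$. Finally one must record that the structural hypotheses on $f$ (the two summability conditions) are preserved, but these are assumed throughout, so no work is needed there.

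The main obstacle is the kernel sum $\sum_{j\in\Z}|\partial_t G(t,j)||j|^\alpha\le Ct^{-1+\alpha/2}$ in the step $(3)\Rightarrow(1)$: it is not a soft consequence of $\|\partial_t G(t,\cdot)\|_1\le Ct^{-1}$ because of the extra polynomial weight $|j|^\alpha$, and it genuinely requires the pointwise decay of $\delta_{right}^2 G(t,j)$ from Lemma \ref{heatbounds} in the regime $|j|>\sqrt t$ combined with the polynomial moment identities \eqref{polyI}. However, this exact estimate already appears verbatim inside the proof of Theorem \ref{th1}, so I would simply cite it. A secondary point requiring care is justifying the symmetrization of $\partial_t e^{t\Delta_d}f$: the rearrangement $\sum_j \partial_t G(t,j)f(n-j)=\sum_j\partial_t G(t,j)f(n+j)$ needs absolute convergence, which follows from $\frac{|f|}{1+|\cdot|^\alpha}\in\ell^\infty(\Z)$ together with the decay of $\partial_t G(t,\cdot)=\delta_{right}^2 G(t,\cdot-1)$ and \eqref{polyI}; the subtraction of the $-2f(n)$ term is then legitimate because $\partial_t G(t,\cdot)$ has vanishing total mass.
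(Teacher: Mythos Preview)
Your argument for $(1)\Rightarrow(2)$ and $(3)\Rightarrow(1)$ matches the paper exactly; in particular, the kernel estimate $\sum_{j\in\Z}|\partial_t G(t,j)||j|^\alpha\le Ct^{-1+\alpha/2}$ and the symmetrization step are handled precisely as in the proof of Theorem~\ref{th1}, so there is nothing to add there.

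The gap is in $(2)\Rightarrow(3)$ for $1\le\alpha<2$. You claim to bound each of the three terms $f(x\pm n)-e^{-y\sqrt{-\Delta_d}}f(x\pm n)$ and $f(x)-e^{-y\sqrt{-\Delta_d}}f(x)$ individually by $\int_0^y\|\partial_u e^{-u\sqrt{-\Delta_d}}f\|_\infty\,du\le C\int_0^y u^{-1+\alpha}\,du$. But the bound $\|\partial_u e^{-u\sqrt{-\Delta_d}}f\|_\infty\le Cu^{-1+\alpha}$ is \emph{not available} when $\alpha\ge 1$: the defining estimate in $\Lambda_P^\alpha$ is for $l=[\alpha]+1=2$ derivatives, and ``integrating down'' via $\partial_u=-\int_u^\infty\partial_w^2\,dw$ gives $\int_u^\infty w^{-2+\alpha}\,dw$, which diverges at infinity for $\alpha\ge1$. (Indeed, for $1<\alpha<2$ the claimed bound would force $\partial_u e^{-u\sqrt{-\Delta_d}}f\to 0$ as $u\to 0^+$, i.e.\ $f$ constant; for $\alpha=1$ it would make the Zygmund class coincide with Lipschitz.)

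The paper avoids this by \emph{not} estimating the three terms separately. It keeps the full second difference under the $\int_0^y$ and then telescopes in the spatial variable to pass to a mixed derivative: one bounds $|\partial_u e^{-u\sqrt{-\Delta_d}}f(n+m)+\partial_u e^{-u\sqrt{-\Delta_d}}f(n-m)-2\partial_u e^{-u\sqrt{-\Delta_d}}f(n)|$ by $Cm\sup|\delta_{right}\partial_u e^{-u\sqrt{-\Delta_d}}f|$, and now $\delta_{right}\partial_u$ has total order $2\ge[\alpha]+1$, so Lemma~\ref{cambioyx} gives the bound $Cu^{-2+\alpha}$, which \emph{is} integrable on $(0,y)$ for $1<\alpha<2$. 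The borderline case $\alpha=1$ still fails ($u^{-1}$ is not integrable at $0$) and requires an additional step: one writes $\partial_u=-\int_u^y\partial_w^2\,dw+\partial_y$, uses the hypothesis on the inner integral, and handles the resulting logarithm directly together with the boundary term at height $y$. Your treatment of the last bracket $II$ via Lemma~\ref{cambioyx} with $m=2$, $l=0$ is fine and essentially the paper's argument.
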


\begin{proof}From Theorem \ref{calor-Poisson} we know that $(1)\implies(2).$
Let $f\in \Lambda^\alpha_P$. If $0<\alpha<1,$ then from Theorem \ref{th1} we have that
$$
|f(n+m)+f(n-m)-2f(n)|\le |f(n+m)-f(n)|+|f(n-m)-f(n)|\le C|m|^\alpha, \:\; n,m\in\Z.
$$
Now  assume that $1\le \alpha<2$ and, without loss of generality, that $m\in\N$. Then, for $y=m$ and $n\in\Z$ we have 
\begin{align*}
    &|f(n+m)+f(n-m)-2f(n)|\\& \le |f(n+m)- e^{-y\sqrt{-\Delta_d}}f(n+m)+f(n-m)-e^{-y\sqrt{-\Delta_d}}f(n-m)-2f(n)+2e^{-y\sqrt{-\Delta_d}}f(n)|\\
    &\quad+|e^{-y\sqrt{-\Delta_d}}f(n+m)+e^{-y\sqrt{-\Delta_d}}f(n-m)-2e^{-y\sqrt{-\Delta_d}}f(n)|=I+II.
\end{align*}

If $1<\alpha<2,$ Lemmata \ref{semigroupP} B and \ref{cambioyx} gives that
\begin{align*}
    I&=\left|\int_0^y(\partial_u e^{-u\sqrt{-\Delta_d}}f(n+m)+\partial_ue^{-u\sqrt{-\Delta_d}}f(n-m)-2\partial_ue^{-u\sqrt{-\Delta_d}}f(n))du \right|\\
    &\le C\;m\int_0^y\Big(\sup_{n'\in[n,n+m-1]}|\delta_{right}\partial_ue^{-u\sqrt{-\Delta_d}}f(n')|+\sup_{n''\in[n-m,n-1]}|\delta_{right}\partial_ue^{-u\sqrt{-\Delta_d}}f(n'')|\Big)du\\
    &\le C\; m\int_0^y u^{-2+\alpha}du=Cm^\alpha.
\end{align*}
If $\alpha=1$, by using that $\partial_ue^{-u\sqrt{-\Delta_d}}f(n)=-\int_u^y\partial_w^2e^{-w\sqrt{-\Delta_d}}f(n)dw+\partial_ye^{-y\sqrt{-\Delta_d}}f(n),$ we have that
\begin{align*}I&\le C \int_0^y\int_u^y w^{-1}dwdu+\left|\int_0^y(\partial_y e^{-y\sqrt{-\Delta_d}}f(n+m)+\partial_ye^{-y\sqrt{-\Delta_d}}f(n-m)-2\partial_ye^{-y\sqrt{-\Delta_d}}f(n))du \right|\\
&\le C\left(y\log(y)-\int_0^y\log (u) du\right)\\&\quad+|y||m|(\sup_{n'\in[n,n+m-1]}|\delta_{right}\partial_ye^{-y\sqrt{-\Delta_d}}f(n')|+\sup_{n''\in[n-m,n-1]}|\delta_{right}\partial_ye^{-y\sqrt{-\Delta_d}}f(n'')|)\\
&\le C (y+y\;m\;y^{-1})=C\;m.
\end{align*}

On the other hand, we have that 
\begin{align*}
 II&=|(e^{-y\sqrt{-\Delta_d}}f(n+m)-e^{-y\sqrt{-\Delta_d}}f(n+m-1))+\ldots+(e^{-y\sqrt{-\Delta_d}}f(n+1)-e^{-y\sqrt{-\Delta_d}}f(n))\\
 &-(e^{-y\sqrt{-\Delta_d}}f(n)-e^{-y\sqrt{-\Delta_d}}f(n-1))-\ldots -(e^{-y\sqrt{-\Delta_d}}f(n-m+1)-e^{-y\sqrt{-\Delta_d}}f(n-m))|\\
  &=\left| \sum_{j=1}^m (\delta_{right}e^{-y\sqrt{-\Delta_d}}f(n-j)-\delta_{right}e^{-y\sqrt{-\Delta_d}}f(n+j-1))\right|\\
 &\le \sum_{j=1}^m|2j-1|\Big|\sup_{n'\in [n-j,n+j-2]}\delta_{right}(\delta_{right}e^{-y\sqrt{-\Delta_d}}f(n'))\Big|\le Cm^{\alpha}.
 \end{align*}

Finally, we prove  (3)$\implies$(1). 
Suppose that $f$ satisfies \eqref{dif2}. Since $G(t,j)=G(t,-j),$ $j\in\N$, and $\partial_te^{t\Delta_s}1=0,$ we have for $t>0$ that
\begin{align*}
    |\partial_t e^{t\Delta_d}f(n)|&=\left| \frac{1}{2}\sum_{j\in\Z}\partial_t G(t,j) (f(n-j)+f(n+j)-2f(n))\right|\\
    &\le C\sum_{j\in\Z}|\partial_t G(t,j)||j|^\alpha.
\end{align*}
Taking now $t>0,$ the rest of the argument follows as in the proof of Theorem \ref{th1}.

\end{proof}

\begin{remark}\label{dif2calor}
Notice that in the previous theorem the assumption  $\sum_{j\in\Z}\frac{|f(j)|}{1+|j|^2}<\infty$  is only needed in the implications in which $\Lambda_P^\alpha$ appears. It can be proved that $(1)\Longleftrightarrow(3)$ only assuming that the function satisfies   $\frac{f}{1+|\cdot|^\alpha}\in\ell^\infty(\Z)$.
\end{remark}

\begin{theorem}\label{derivada}
Let $\alpha>1$. Then, $f\in\Lambda^\alpha_H$ if, and only if $\delta_{right/left}f\in \Lambda^{\alpha-1}_H$.
\end{theorem}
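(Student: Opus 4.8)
\smallskip
\noindent\textbf{Proof proposal.}

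The plan is to reduce everything to statements about $g:=\delta_{right}f$. Since $\delta_{left}f=-\delta_{right}f(\cdot-1)$ and the space $\Lambda_H^{\beta}$ is invariant under translation and under change of sign, $\delta_{right}f\in\Lambda_H^{\alpha-1}$ is equivalent to $\delta_{left}f\in\Lambda_H^{\alpha-1}$, so it suffices to treat $g$. I will use throughout the commutation identities of Remark \ref{obs}, $\partial_t^l e^{t\Delta_d}g=\delta_{right}\partial_t^l e^{t\Delta_d}f=\partial_t^l\delta_{right}e^{t\Delta_d}f$, together with $\partial_t e^{t\Delta_d}f=\Delta_d e^{t\Delta_d}f$ and the elementary identity $\Delta_d h(n)=\delta_{right}^2 h(n-1)$, so that $\Delta_d^j e^{t\Delta_d}f=\delta_{right}^{2j}e^{t\Delta_d}f(\cdot-j)$. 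Write $k:=[\alpha/2]+1$ and $k':=[(\alpha-1)/2]+1$ for the natural derivative orders in the definitions of $\Lambda_H^\alpha$ and $\Lambda_H^{\alpha-1}$; one has $k-k'\in\{0,1\}$, with $k=k'$ precisely when $\alpha\in[2j+1,2j+2)$ for some $j\in\N_0$.

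\emph{The direction $f\in\Lambda_H^\alpha\Rightarrow g\in\Lambda_H^{\alpha-1}$.} For the semigroup bound, Lemma \ref{cambioyx} applied to $f$ with $m=1$ gives $\|\partial_t^l e^{t\Delta_d}g\|_\infty=\|\partial_t^l\delta_{right}e^{t\Delta_d}f\|_\infty\le Ct^{-(l+1/2)+\alpha/2}=Ct^{-l+(\alpha-1)/2}$ for every integer $l\ge k$; once the growth of $g$ is available, Lemma \ref{subirk} transfers this estimate down to the required level $k'\le k$. The delicate part is the growth $|g(n)|\le C(1+|n|^{\alpha-1})$. I would prove it by fixing $n$, setting $T:=1+|n|^2$, and Taylor expanding $s\mapsto\phi(s):=\delta_{right}e^{s\Delta_d}f(n)$ about $s=T$ with integral remainder of order $k$; letting $s\to0^+$ (legitimate by Lemma \ref{semigroupP}(A)(iii), which gives $\phi(0^+)=\delta_{right}f(n)$, together with dominated convergence) yields $\delta_{right}f(n)=\sum_{j=0}^{k-1}\frac{(-T)^j}{j!}\phi^{(j)}(T)\pm\int_0^T\frac{u^{k-1}}{(k-1)!}\phi^{(k)}(u)\,du$. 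Each boundary term is controlled by Lemma \ref{semigroupP}(A)(ii), since $\phi^{(j)}(T)=\delta_{right}\Delta_d^j e^{T\Delta_d}f(n)=\delta_{right}^{2j+1}e^{T\Delta_d}f(n-j)$ is, up to an $O(1)$ shift, a $(2j{+}1)$-st order difference: one gets $|\phi^{(j)}(T)|\le C(1+|n|)^{\alpha-2j-1}$ and hence $\frac{T^j}{j!}|\phi^{(j)}(T)|\le C(1+|n|)^{2j}(1+|n|)^{\alpha-2j-1}=C(1+|n|)^{\alpha-1}$. The remainder is controlled by Lemma \ref{cambioyx} applied to $f$ ($l=k$, $m=1$): $|\phi^{(k)}(u)|\le Cu^{-(k+1/2)+\alpha/2}$, so the integrand is $\le Cu^{-3/2+\alpha/2}$, which is integrable at $0$ \emph{precisely because} $\alpha>1$, and $\int_0^T u^{-3/2+\alpha/2}\,du\le CT^{(\alpha-1)/2}\le C(1+|n|)^{\alpha-1}$. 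Summing, $|\delta_{right}f(n)|\le C(1+|n|)^{\alpha-1}$.

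\emph{The direction $g\in\Lambda_H^{\alpha-1}\Rightarrow f\in\Lambda_H^\alpha$.} The growth of $f$ is immediate: from $|g(m)|\le C(1+|m|^{\alpha-1})$ and telescoping, $f(n)=f(0)-\sum_{j=0}^{n-1}\delta_{right}f(j)$ for $n\ge1$ (and the analogous formula with $\delta_{left}$ for $n<0$), so $|f(n)|\le C(1+|n|^\alpha)$ because $\alpha>1$. For the semigroup estimate, the identities of the first paragraph give $\partial_t^k e^{t\Delta_d}f(n)=(\partial_t^{k-1}\delta_{right}e^{t\Delta_d}g)(n-1)$, hence $\|\partial_t^k e^{t\Delta_d}f\|_\infty=\|\partial_t^{k-1}\delta_{right}e^{t\Delta_d}g\|_\infty$. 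When $k=k'+1$ this is $\le Ct^{-k+\alpha/2}$ by Lemma \ref{cambioyx} applied to $g$ ($l=k-1$, $m=1$; the hypothesis $(k-1)+\frac12\ge k'$ holds), and we are done. When $k=k'$, i.e.\ $\alpha\in[2j+1,2j+2)$, this exact argument just fails, and I would argue by induction on $\lceil\alpha\rceil$: for $\alpha\in(1,2)$ the statement follows from Theorems \ref{th1} and \ref{equiless2}, by checking that $\delta_{right}f\in C^{\alpha-1}$ forces $|f(\cdot+m)+f(\cdot-m)-2f(\cdot)|\le C|m|^\alpha$ (expand the second difference as $\sum_{j=0}^{m-1}[\delta_{right}f(n-1-j)-\delta_{right}f(n+j)]$ and apply H\"older continuity, using $\sum_{j\le m}(2j+1)^{\alpha-1}\le Cm^\alpha$), and conversely via \eqref{dif2}; and for $\alpha\ge3$ one writes $\partial_t^k e^{t\Delta_d}f(n)=(\partial_t^{k-1}e^{t\Delta_d}(\delta_{right}^2f))(n-1)$ and invokes the theorem at index $\alpha-1$ (available by induction, and giving $\delta_{right}f\in\Lambda_H^{\alpha-1}\iff\delta_{right}^2f\in\Lambda_H^{\alpha-2}$), so that the bound reduces to the defining estimate of $\delta_{right}^2f\in\Lambda_H^{\alpha-2}$ at its own natural level $k-1=[(\alpha-2)/2]+1$; the bookkeeping matches because $-k+\alpha/2=-(k-1)+(\alpha-2)/2$. (The growth of $\delta_{right}^2f$ needed in the forward direction of this auxiliary step is obtained by the same Taylor/remainder argument as above, now with $\delta_{right}^2$ in place of $\delta_{right}$ and $m=2$ in Lemma \ref{cambioyx}.)

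\emph{Where the difficulty lies.} The formal parts are the polynomial growth of $f$ (backward) and the level transfers (Lemmata \ref{cambioyx}, \ref{subirk}). The two genuinely non-routine points are: (i) the polynomial growth of $\delta_{right}f$ in the forward direction — one must actually extract the extra power of $1+|n|$ from the semigroup, which is exactly what the cutoff $T\sim|n|^2$ and the $s\to0$ Taylor/remainder argument do, the integrability of $u^{-3/2+\alpha/2}$ at $u=0$ being the place where $\alpha>1$ enters; and (ii) the backward semigroup estimate in the ranges $\alpha\in[2j+1,2j+2)$, where $[\alpha/2]=[(\alpha-1)/2]$ and the naive order count is off by a half, forcing the induction on $\alpha$ down to the base cases $\alpha\le2$ — verifying that the reduction $\alpha\rightsquigarrow\alpha-2$, $k\rightsquigarrow k-1$ respects both orders and exponents is the crux.
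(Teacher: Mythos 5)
Your proof is correct, and while its skeleton (growth of $\delta_{right}f$ via a cutoff at $t\sim|n|^2$, level transfers via Lemmata \ref{cambioyx} and \ref{subirk}, telescoping for the growth of $f$) matches the paper, you deviate in two places. For the growth of $\delta_{right}f$ you use a single Taylor expansion of $\phi(s)=\delta_{right}e^{s\Delta_d}f(n)$ about $T=1+|n|^2$ with integral remainder of order $k=[\alpha/2]+1$; the paper instead iterates integrals between $t$ and $|n|^2$ and must separate the cases $2\le\alpha<4$, $\alpha$ even, $\alpha$ odd (where logarithms appear). Your version is more uniform: the only exponent occurring in the remainder is $-3/2+\alpha/2\neq-1$, so no logarithmic case arises, and the ingredients you quote (Lemma \ref{semigroupP} A(ii) for the boundary terms $\phi^{(j)}(T)=\delta_{right}^{2j+1}e^{T\Delta_d}f(n-j)$, Lemma \ref{cambioyx} with $m=1$ for the remainder, $\alpha>1$ for integrability at $0$) are exactly sufficient. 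The bigger divergence is the backward semigroup estimate. The paper sidesteps your parity problem entirely: from $\delta_{right/left}f\in\Lambda_H^{\alpha-1}$ it applies Lemma \ref{cambioyx} with one \emph{extra} spatial difference and uses $\delta_{left}\delta_{right}=-\Delta_d$ to identify $\partial_t^{k'}\delta_{left/right}e^{t\Delta_d}(\delta_{right/left}f)$ with $\pm\partial_t^{k'+1}e^{t\Delta_d}f$, where $k'=[(\alpha-1)/2]+1$, getting $\|\partial_t^{k'+1}e^{t\Delta_d}f\|_\infty\le Ct^{-(k'+1)+\alpha/2}$; since $k'+1\ge[\alpha/2]+1$ always, Lemma \ref{subirk} then descends to the natural level, uniformly in $\alpha$, with no case splitting and no induction. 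Your route (direct estimate when $k=k'+1$, induction on $\lceil\alpha\rceil$ when $k=k'$, base case $\alpha\in(1,2)$ via the pointwise characterizations) is valid and non-circular, since Theorems \ref{th1} and \ref{equiless2} do not rely on Theorem \ref{derivada}; but it is longer, and in the base case you should cite Remark \ref{dif2calor} rather than Theorem \ref{equiless2} itself, because the summability hypothesis $\sum_{j\in\Z}|f(j)|/(1+|j|^2)<\infty$ is not available under the mere growth assumption. In short: your Taylor organization buys a cleaner, case-free growth estimate; the paper's ``spend a spatial difference to gain a time derivative, then descend'' trick buys in the backward direction the uniformity that your argument has to reconstruct by induction.
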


\begin{proof}
Suppose that  $f\in\Lambda^\alpha_H$ and let $k=[\alpha/2]+1$. 

We prove first that $\frac{|\delta_{right/left}f|}{1+|\cdot|^{\alpha-1}}\in\ell^\infty(\Z)$. Take $n\neq 0.$
From Lemma \ref{semigroupP} A (iii), we have that
\begin{align*}
    |\delta_{right}f(n)|&\le \sup_{0<t<|n|^2}|e^{t\Delta_d}\delta_{right}f(n)|\\
    &\le \sup_{0<t<|n|^2}|e^{t\Delta_d}\delta_{right}f(n)-e^{|n|^2\Delta_d}\delta_{right}f(n)|+|e^{|n|^2\Delta_d}\delta_{right}f(n)|=A+B.
\end{align*}
Regarding $B,$ by using Lemma \ref{semigroupP} A (ii)
 we get that
\begin{align*}
|B|&=|\delta_{right}e^{|n|^2\Delta_d}f(n)|\le C (1+|n|^{\alpha-1}).
\end{align*}

To deal with A we have to distinguish cases. If $1<\alpha<2$, then
$$
|A|= \sup_{0<t<|n|^2}\left|\int_t^{|n|^2}\partial_u\delta_{right}e^{u\Delta_d}f(n)du\right|\le C\sup_{0<t<|n|^2}(|n|^{-1+\alpha}+t^{-1/2+\alpha/2})\le C (1+|n|^{\alpha-1}).
$$
Now consider the case $2\le \alpha<4$, $\alpha\neq 3.$ Then, $[\alpha/2]+1=2$ and from Lemma \ref{cambioyx} we have that

\begin{align}\label{Anoimpar}
    |A|&=\sup_{0<t<|n|^2}\left|\int_t^{|n|^2}\left(\int_u^{|n|^2}\partial_w^2\delta_{right}e^{w\Delta_d}f(n)dw+\partial_v\delta_{right}e^{v\Delta_d}f(n)\Big|_{v=|n|^2}\right)du\right|\nonumber\\
    &\le C\sup_{0<t<|n|^2}\left(\int_t^{|n|^2} \int_u^{|n|^2}w^{-5/2+\alpha/2}dwdu+(|n|^2-t)\partial_v\delta_{right}e^{v\Delta_d}f(n)\Big|_{v=|n|^2}\right)\nonumber\\\
    &\le C \sup_{0<t<|n|^2}\left(\int_t^{|n|^2} (|n|^{-3+\alpha}-u^{-3/2+\alpha/2})du+(|n|^2-t)\partial_v\delta_{right}e^{v\Delta_d}f(n)\Big|_{v=|n|^2}\right)\nonumber\\\
    &\le C \sup_{0<t<|n|^2}\left(|n|^{-3+\alpha}(|n|^2-t)+(|n|^{-1+\alpha}-t^{-1/2+\alpha/2})+(|n|^2-t)\partial_v\delta_{right}e^{v\Delta_d}f(n)\Big|_{v=|n|^2}\right)\nonumber\\\
    &\le C |n|^{\alpha-1}+C|n|^2\partial_v\delta_{right}e^{v\Delta_d}f(n)\Big|_{v=|n|^2}.
\end{align}
Now we use Lemma \ref{semigroupP} A (ii) to get  $$\Big|\partial_v\delta_{right}e^{v\Delta_d}f(n)\Big|_{v=|n|^2}\Big|=|\delta_{right}^3e^{|n|^2\Delta_d}f(n-1)|\leq C\frac{1+|n|^{\alpha}}{n^3}.$$

Therefore,  $|A|\le C (1+ |n|^{\alpha-1}).$
\newline
If $\alpha$ is an even number we can proceed as in \eqref{Anoimpar}, but writing $[\alpha/2]+1$ integrals, such that inside the inner integral will be $\partial_w^{[\alpha/2]+1}\delta_{right}e^{w\Delta_d}f(n)$. 
\newline
If $\alpha$ is odd, we have to proceed similarly, but now it will appear some logarithms in the integrals. We do the case $\alpha=3$ to illustrate the computation,  but the rest of cases are analogous.
\begin{align*}
    |A|&=\sup_{0<t<|n|^2}\left|\int_t^{|n|^2}\left(\int_u^{|n|^2}\partial_w^2\delta_{right}e^{w\Delta_d}f(n)dw+\partial_v\delta_{right}e^{v\Delta_d}f(n)\Big|_{v=|n|^2}\right)du\right|\nonumber\\
    &\le\sup_{0<t<|n|^2}\left(\int_t^{|n|^2} \int_u^{|n|^2}w^{-1}dwdu+(|n|^2-t)\partial_v\delta_{right}e^{v\Delta_d}f(n)\Big|_{v=|n|^2}\right)\nonumber\\\
    &\le C\sup_{0<t<|n|^2}\left(\int_t^{|n|^2} (\log(|n|^2)-\log u)du+(|n|^2-t)\partial_v\delta_{right}e^{v\Delta_d}f(n)\Big|_{v=|n|^2}\right)\nonumber\\\
    &= C\sup_{0<t<|n|^2}[\log|n|^2(|n|^2-t)-(|n|^2\log|n|^2)+|n|^2+t\log t-t+(|n|^2-t)\partial_v\delta_{right}e^{v\Delta_d}f(n)\Big|_{v=|n|^2}]\nonumber\\\
    &\le C |n|^2+C|n|^2\partial_v\delta_{right}e^{v\Delta_d}f(n)\Big|_{v=|n|^2}\le C (1+|n|^2).
\end{align*}

Now we prove the condition on the semigroup. Lemma \ref{cambioyx} implies that 
$$
\|\partial_t^k\delta_{right/left}e^{t\Delta_d}f\|_\infty \le C t^{-(k+1/2)+\alpha/2}=Ct^{-k+\frac{\alpha-1}{2}}.
$$
Since $\partial_t^k\delta_{right/left}e^{t\Delta_d}f=\partial_t^ke^{t\Delta_d}(\delta_{right/left}f)$, see Remark \ref{obs}, from Lemma \ref{subirk} we get that $\delta_{right/left}f\in \Lambda_H^{\alpha-1}$.

Assume now that $\delta_{right/left}f\in \Lambda_H^{\alpha-1}$. By definition, we have that $\frac{|\delta_{right/left}f|}{(1+|\cdot|^{\alpha-1})}\in \ell^\infty(\Z).$ Thus, the proof of Lemma \ref{sizeCalpha} gives that $\frac{|f|}{(1+|\cdot|^{\alpha})}\in \ell^\infty(\Z).$




Let $k=[(\alpha-1)/2]+1$. From Lemma \ref{cambioyx} we have that
$$
\|\partial_t^k\delta_{left/right}e^{t\Delta_d}(\delta_{right/left}f)\|_\infty \le C t^{-(k+1/2)+\frac{\alpha-1}{2}}=Ct^{-(k+1)+\frac{\alpha}{2}}.
$$
Since $\partial_t^k\delta_{left/right}e^{t\Delta_d}(\delta_{right/left}f)=\partial_t^k\delta_{left/right}\delta_{right/left}e^{t\Delta_d}f$, we have that
$$
\|\partial_t^k\Delta_de^{t\Delta_d}f\|_\infty \le Ct^{-(k+1)+\frac{\alpha}{2}}.
$$
Therefore, \eqref{eq1} gives that $\|\partial_t^{k+1}e^{t\Delta_d}f\|_\infty \le C t^{-(k+1)+\alpha/2},$
so from  Lemma \ref{subirk} we conclude that  $f\in\Lambda^\alpha_H$.

\end{proof}

\begin{theorem}\label{derivadaP}
        Let $\alpha>1$ and $f:\Z\to\R$. If $f\in\Lambda^\alpha_P$, then $\delta_{right/left}f\in \Lambda^{\alpha-1}_P$.
\end{theorem}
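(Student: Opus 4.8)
The plan is to verify directly the two defining conditions of $\Lambda^{\alpha-1}_P$ for $g:=\delta_{right/left}f$. In contrast with the heat analogue, Theorem \ref{derivada}, no pointwise growth estimate on $g$ will be needed here: membership in $\Lambda^{\beta}_P$ only requires the weighted summability $\sum_{j\in\Z}\frac{|g(j)|}{1+|j|^2}<\infty$ together with a single semigroup bound, so the argument will be considerably shorter than in the heat case. First I would dispose of the summability. Writing $|\delta_{right}f(j)|\le|f(j)|+|f(j+1)|$ and using that the weight $1+|\cdot|^2$ is quasi-invariant under unit translations, $1+(k\mp1)^2\le C(1+k^2)$, a shift of the summation index gives $\sum_{j\in\Z}\frac{|\delta_{right}f(j)|}{1+|j|^2}\le C\sum_{j\in\Z}\frac{|f(j)|}{1+|j|^2}<\infty$, and similarly for $\delta_{left}f$; recall that $f\in\Lambda^\alpha_P$ already guarantees $\sum_{j\in\Z}\frac{|f(j)|}{1+|j|^2}<\infty$.

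For the semigroup condition I would set $l':=[\alpha-1]+1$ and note that, since $\alpha>1$, $l'=[\alpha]$. By Remark \ref{obs} — whose hypotheses hold because $\sum_j\frac{|f(j)|}{1+|j|^2}<\infty$ (Lemma \ref{semigroupP} B) and because $\sum_{j\in\Z}P(y,n-j)|f(j)|<\infty$ for every $y>0$ and $n\in\Z$ by Lemma \ref{Poissonlema} (i)--(ii) — one can write $e^{-y\sqrt{-\Delta_d}}(\delta_{right/left}f)=\delta_{right/left}e^{-y\sqrt{-\Delta_d}}f$ and interchange this identity with $\partial_y^{l'}$. Then I would invoke the Poisson part of Lemma \ref{cambioyx} applied to $f\in\Lambda^\alpha_P$ with $l=[\alpha]$, $m=1$ and the single difference $\delta_{right}$ (respectively $\delta_{left}$): the admissibility condition $m+l=[\alpha]+1\ge[\alpha]+1$ holds, so
\[
\|\partial_y^{[\alpha]}e^{-y\sqrt{-\Delta_d}}(\delta_{right/left}f)\|_\infty=\|\partial_y^{[\alpha]}\delta_{right/left}e^{-y\sqrt{-\Delta_d}}f\|_\infty\le C\,y^{-([\alpha]+1)+\alpha},\qquad y>0.
\]
Since $[\alpha]=[\alpha-1]+1=l'$ and $-([\alpha]+1)+\alpha=-l'+(\alpha-1)$, this is precisely the bound $\|\partial_y^{l'}e^{-y\sqrt{-\Delta_d}}(\delta_{right/left}f)\|_\infty\le C\,y^{-l'+(\alpha-1)}$ appearing in the definition of $\Lambda^{\alpha-1}_P$. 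Together with the summability, this yields $\delta_{right/left}f\in\Lambda^{\alpha-1}_P$.

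I do not expect a genuine obstacle in this statement. The only point that needs a little care is the bookkeeping behind Remark \ref{obs}: one must justify pushing $\partial_y$ through the defining integral (or series) of $e^{-y\sqrt{-\Delta_d}}f$ and past the finite difference $\delta_{right/left}$, which is exactly what the absolute-convergence estimates of Lemma \ref{Poissonlema} together with $\sum_j\frac{|f(j)|}{1+|j|^2}<\infty$ (and the translation quasi-invariance of the weight, to handle $e^{-y\sqrt{-\Delta_d}}(\delta_{right/left}f)$ being well defined) are designed to provide. Everything substantive beyond that is already packaged in Lemma \ref{cambioyx}, which in turn rests on the heat and Poisson kernel estimates of Section \ref{semigroups}; in effect the difficulty of this whole circle of ideas has been front-loaded into those kernel bounds.
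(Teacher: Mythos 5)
Your proposal is correct and follows essentially the same route as the paper: trivial weighted summability of $\delta_{right/left}f$, commutation $\partial_y^{[\alpha]}\delta_{right/left}e^{-y\sqrt{-\Delta_d}}f=\partial_y^{[\alpha]}e^{-y\sqrt{-\Delta_d}}(\delta_{right/left}f)$ via Remark \ref{obs}, and the bound $Cy^{-([\alpha]+1)+\alpha}=Cy^{-[\alpha]+(\alpha-1)}$ from Lemma \ref{cambioyx} with $l=[\alpha]$, $m=1$. The only cosmetic difference is that the paper additionally cites Lemma \ref{subirk} to match the order of differentiation, whereas you observe directly that $[\alpha]=[\alpha-1]+1$, which makes that step unnecessary.
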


\begin{proof}
Let $k=[\alpha].$ Suppose that $f\in\Lambda^\alpha_P$. Then $\sum_{j\in\Z}\frac{|f(j)|}{1+|j|^2}<\infty$ and
Lemma \ref{cambioyx} implies that 
$$
\|\partial_y^k\delta_{right/left}e^{-y\sqrt{-\Delta_d}}f\|_\infty \le C y^{-(k+1)+\alpha}=Cy^{-k+{\alpha-1}}.
$$
It is clear that 
$
\sum_{j\in\Z}\frac{|\delta_{right}f(j)|}{1+|j|^2}<\infty.$ 
Moreover, since $\partial_y^k\delta_{right/left}e^{-y\sqrt{-\Delta_d}}f=\partial_y^ke^{-y\sqrt{-\Delta_d}}(\delta_{right/left}f)$, see Remark \ref{obs}, from Lemma \ref{subirk} we get that $\delta_{right/left}f\in \Lambda^{\alpha-1}_P$.

\end{proof}

Finally, we can prove Theorem \ref{caractodos}.
\vspace{0.4 cm}

{\it Proof of Theorem \ref{caractodos}}. We prove first (A.1). In Theorem \ref{th1} 
 we have proved the result for $0<\alpha<1$. 
Let $k< \alpha<k+1$, for certain $k\in\N.$ Assume first that $f\in \Lambda_H^\alpha $. Then, by applying $k$ times Theorem \ref{derivada} we get that $\delta_{right/left}^{l,s}f\in\Lambda_H^{\alpha-k}$, $l+s=k$, and from Theorem \ref{th1} and the definition of $C^{\alpha-k}(\Z)$ we get that
$$
\sup_{n\neq m}\frac{|\delta_{right/left}^{l,s}f(n)-\delta_{right/left}^{l,s}f(m)|}{|n-m|^{\alpha-k}}<\infty, \:\; \text{ whenever }l+s=k,
$$
so $f\in C^\alpha(\Z).$

Conversely, suppose that $f\in C^\alpha(\Z).$ From Lemma \ref{sizeCalpha} we know that $\frac{|f|}{1+|\cdot|^\alpha}\in\ell^\infty(\Z).$ Moreover, the definition of the space gives that $\delta_{right/left}^{l,s}f \in C^{\alpha-k}(\Z)$, $l+s=k$. Therefore, Theorem \ref{th1} implies that  $\delta_{right/left}^{l,s}f\in \Lambda_H^{\alpha-k}$, $l+s=k$. Applying k times Theorem \ref{derivada}, we conclude that $f\in\Lambda^\alpha_H.$

Regarding the proof of (A.2), we proceed as in the proof of (A.1) but now we use Theorem \ref{equiless2} (see Remark \ref{dif2calor}) instead of Theorem \ref{th1}.

In virtue of Theorem \ref{calor-Poisson} and (A.1), to establish (B) we only need to prove that $f\in \Lambda_P^\alpha\implies f\in C^\alpha(\Z).$ Let $f\in \Lambda_P^\alpha$. 
Then, by applying $k$ times Theorem \ref{derivadaP} we get that $\delta_{right/left}^{l,s}f\in\Lambda_P^{\alpha-k}$, $l+s=k$, and from Theorem \ref{th1} and the definition of $C^{\alpha-k}(\Z)$ we get that
$$
\sup_{n\neq m}\frac{|\delta_{right/left}^{l,s}f(n)-\delta_{right/left}^{l,s}f(m)|}{|n-m|^{\alpha-k}}<\infty, \:\; \text{ whenever }l+s=k,
$$
so $f\in C^\alpha(\Z).$

Regarding the proof of (B.2), we proceed as in the proof of (B.1) but now we use Theorem \ref{equiless2} instead of Theorem \ref{th1}.
\edproof

\section{Applications}\label{applications}
\setcounter{theorem}{0}
\setcounter{equation}{0}

In this section we shall prove regularity results for fractional powers of the discrete Laplacian in the Lipschitz spaces defined through the heat semigroup. To that aim, we recall the definition of the fractional powers of the discrete Laplacian, by using the semigroup method, see \cite{CRSTV,Stinga,ST}. For other works considering fractional powers of the discrete Laplacian see for instance \cite{Miana, LR}.


  Let $I$ denote the identity operator. For good enough functions, we define the following operators:
  \begin{itemize}\item The Bessel potential of order $\beta>0$, 
  $$
  (I-\Delta_d)^{-\beta/2} f(n)=\frac{1}{\Gamma(\beta/2)}\int_0^\infty e^{-\tau (I-\Delta_d)}f(n)\tau^{\beta/2}\frac{d\tau}{\tau},  \:\:n\in\Z.
  $$
  \item The positive fractional power of the Laplacian, \begin{equation}\label{positivepower}
  (-\Delta_d)^\beta f(n)=\frac{1}{c_{\beta}}\int_0^\infty\left(e^{\tau \Delta_d}-I\right)^{[\beta]+1}f(n) \frac{d\tau}{\tau^{1+\beta}}, \:\:n\in\Z, \quad \beta>0,
  \end{equation}
  where $c_{\beta}=\int_0^\infty\left( e^{-\tau}-1\right)^{[\beta]+1} \frac{d\tau}{\tau^{1+\beta}}$.
   \item The negative fractional power of the Laplacian,
   $$
   (-\Delta_d)^{-\beta}f(n)=\frac{1}{\Gamma(\beta)}\int_0^\infty e^{\tau\Delta_d}f(n)\frac{d\tau}{\tau^{1-\beta}},  \:\:n\in\Z, \quad 0<\beta<1/2.
  $$

  \end{itemize}
  The previous formulae come from the following Gamma formulae, see \cite{CRSTV},
  	\begin{equation}\label{fractpow}
  	\lambda^{-\beta} = \frac1{\Gamma(\beta)} \int_0^\infty e^{-\lambda t }t^{\beta}\,\frac{dt}{t},\quad
  	\hbox{and}\quad \lambda^{\beta} = \frac1{c_\beta} \int_0^\infty
  	(e^{-\lambda t}-1)^{[\beta]+1}\,\frac{dt}{t^{1+\beta}},\end{equation}
  	where $ \beta >0 $ and $\lambda$ is a complex number with $\Real \lambda \ge 0$.
  	
 As it will be shown in Theorem \ref{Besselpot}, Bessel potentials of order $\beta>0$ are well defined for $f\in\Lambda^\alpha_H$, $\alpha>0$. However, the fractional powers of the Laplacian, $(-\Delta_d)^{\pm\beta}$,  are not well defined in general for $\Lambda_H^\alpha$ functions and an additional condition is needed. In \cite{CRSTV}, the authors assumed that the functions belongs to the space
$$
\ell_{\pm\beta}:=\left\{u: \Z\to\R: \:\; \sum_{m\in\Z}\frac{|u(m)|}{(1+|m|)^{1\pm 2\beta}}<\infty\right\},
$$
 in order to define $(-\Delta_d)^{\pm\beta}f,$ where $0<\beta<1$ in the case of the positive powers and $0<\beta<1/2$ for the negative ones. Note that such spaces are the analogs in the discrete setting of the ones  considered in \cite{Silvestre} for the Laplacian in $\R^n$. The choice of these spaces is justified since  the discrete kernel in the pointwise formula 
\begin{equation}\label{potenciapuntual}
(-\Delta_d)^{\pm \beta}f(n)=\sum_{m\in\Z}K_{\pm \beta}(n-m)f(m),  \:\:n\in\Z,
\end{equation}
satisfies 
$K_{\beta}(m)\sim \frac{1}{|m|^{1+2\beta}},$
whenever $0<\beta<1$ and $K_{-\beta}(m)\sim \frac{1}{|m|^{1- 2\beta}}$, for $0<\beta<1/2$, see \cite{CRSTV}.
Observe that the negative powers of the Laplacian  are only well defined for  $0<\beta<1/2,$ since the integral that defines it is not absolutely convergent for $\beta\ge 1/2,$ see \eqref{asymptotic2}.

In this section, we also want to prove regularity results for positive powers than can be bigger that 1. For that purpose we extend the definition above of $\ell_\beta$ for any $\beta> 0.$ Let $\beta>-1/2$ and $n\in\Z,$ we define the discrete kernel \begin{equation}\label{FractKernel}
\displaystyle K_{\beta}(n):=\left\{\begin{array}{ll}
0,&|n|-\beta-1\in\N_0,\\ \\
\displaystyle\frac{(-1)^{|n|}\Gamma(2\beta+1)}{\Gamma(1+\beta+|n|)\Gamma(1+\beta-|n|)},&\text{otherwise}.
\end{array} \right.
\end{equation}
Note that when $\beta\in\N_0,$ then $K_{\beta}(n)=0$ for all $|n|\geq \beta +1.$

\begin{lemma}\label{lbeta}
 Let $f\in\ell_\beta$, $\beta>0$. Then, $(-\Delta_d)^\beta f$ is well defined and
 $$
 |(-\Delta_d)^{\beta}f(n)|\le C\sum_{j\in\Z}\frac{|f(j)|}{1+|n-j|^{1+2\beta}},\quad n\in\Z.
 $$
 Moreover, in that case, $$(-\Delta_d)^{\beta}f(n)=\sum_{j\in\Z}K_{\beta}(j)(f(n-j)-f(n)),\quad n\in\Z.$$
\end{lemma}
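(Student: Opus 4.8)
The plan is to start from the semigroup definition \eqref{positivepower}, namely
$$
(-\Delta_d)^\beta f(n)=\frac{1}{c_\beta}\int_0^\infty\bigl(e^{\tau\Delta_d}-I\bigr)^{[\beta]+1}f(n)\,\frac{d\tau}{\tau^{1+\beta}},
$$
and to split the $\tau$-integral at $\tau=1$. For the range $\tau>1$ I would expand $(e^{\tau\Delta_d}-I)^{[\beta]+1}$ by the binomial formula into a combination of $e^{k\tau\Delta_d}$, $0\le k\le[\beta]+1$, and control each $e^{k\tau\Delta_d}f(n)$ using the heat-kernel bound $G(t,j)\le C(1+|j|)^{-1}$ from Lemma \ref{Poissonlema}'s proof (or directly $G(t,j)\le Ct^{-1/2}$ together with the pointwise decay $G(t,j)\le Ct/|j|^3$ for $t\le|j|^2$): this gives absolute convergence of the tail because $f\in\ell_\beta$, and the resulting bound is $\sum_j|f(j)|(1+|n-j|)^{-1-2\beta}$ for the portion $|n-j|$ large, while for $|n-j|$ bounded one just uses $\int_1^\infty\tau^{-1-\beta}\,d\tau<\infty$. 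For the range $0<\tau<1$ I would instead write $(e^{\tau\Delta_d}-I)^{[\beta]+1}$ in terms of the discrete second differences: since $e^{\tau\Delta_d}-I=\int_0^\tau\Delta_d e^{s\Delta_d}\,ds$ and $\Delta_d$ is a second-order difference, one gets a factor $\tau^{[\beta]+1}$ (up to $\ell^1$-bounded kernels, using Lemma \ref{kernelest}), so the integrand is $O(\tau^{[\beta]-\beta})=O(\tau^{\{\beta\}})$ near $0$, which is integrable; again the spatial kernel is controlled by the heat-kernel estimates and produces the claimed convolution bound.

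The second task is to identify the pointwise kernel, i.e. to show
$$
(-\Delta_d)^\beta f(n)=\sum_{j\in\Z}K_\beta(j)\bigl(f(n-j)-f(n)\bigr),
$$
with $K_\beta$ as in \eqref{FractKernel}. Here I would first establish the scalar identity: for each fixed $j$,
$$
\frac{1}{c_\beta}\int_0^\infty\bigl(e^{\tau\Delta_d}-I\bigr)^{[\beta]+1}\delta_0(n-j)\,\frac{d\tau}{\tau^{1+\beta}}=K_\beta(n-j),
$$
which reduces — after translation invariance — to computing $\frac1{c_\beta}\int_0^\infty(\text{iterated differences of }G)(\tau,j)\,\tau^{-1-\beta}\,d\tau$. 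The cleanest route is the Fourier side: under the semidiscrete Fourier transform, $-\Delta_d$ becomes multiplication by $4\sin^2(\theta/2)$, and the second Gamma formula in \eqref{fractpow} with $\lambda=4\sin^2(\theta/2)$ shows the symbol of $(-\Delta_d)^\beta$ is $(4\sin^2(\theta/2))^\beta$; then $K_\beta(n)$ is its inverse Fourier coefficient, and the explicit Gamma-quotient formula \eqref{FractKernel} follows from a known Fourier expansion of $(2\sin(\theta/2))^{2\beta}$ (this is exactly the computation done in \cite{CRSTV} for $0<\beta<1$, and the same residue/Beta-integral argument via \eqref{IntFractBessel} works verbatim for general $\beta>0$, $\beta\notin\N$, while for $\beta\in\N_0$ the finite-difference formula is elementary and $K_\beta$ is finitely supported). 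Once the kernel of $(-\Delta_d)^\beta\delta_0$ is known to be $K_\beta$, I would check that $\sum_j K_\beta(j)=0$ (this is the statement that the symbol vanishes at $\theta=0$, equivalently $(4\sin^2(\theta/2))^\beta\to0$) so that the ``$-f(n)$'' term can be legitimately inserted, and then pass from $\delta_0$ to general $f\in\ell_\beta$ by linearity plus a dominated-convergence/Fubini argument justified by the decay $|K_\beta(j)|\le C(1+|j|)^{-1-2\beta}$ (immediate from \eqref{FractKernel} and the Gamma asymptotics recalled at the start of Section \ref{semigroups}) together with $f\in\ell_\beta$.

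I expect the main obstacle to be the interchange of the $\tau$-integral with the infinite spatial sum over $j$ when $f$ is merely in $\ell_\beta$ and $\beta$ is large: near $\tau=0$ each individual term $(e^{\tau\Delta_d}-I)^{[\beta]+1}f(n)$ is fine, but one must produce a $\tau$- and $j$-integrable majorant simultaneously, and this is where the precise heat-kernel derivative/difference estimates of Lemmata \ref{heatbounds} and \ref{kernelest} (the $\tau^{l/2}$-gain from $l$ differences, and the pointwise Gaussian-type decay in $j$ for $\tau\le|j|^2$) have to be combined carefully — the gain of $[\beta]+1$ powers of $\tau$ from the $([\beta]+1)$-fold difference exactly compensates $\tau^{-1-\beta}$, leaving $\tau^{\{\beta\}-1}$ with $\{\beta\}=\beta-[\beta]\in[0,1)$, which is integrable only because we are taking one more difference than $[\beta]$. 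The rest — the binomial bookkeeping for the tail, the Fourier computation of $K_\beta$, and verifying $\sum_j K_\beta(j)=0$ — is routine given the identities \eqref{fractpow}, \eqref{IntFractBessel}, \eqref{fourier} and the Gamma asymptotics already recorded.
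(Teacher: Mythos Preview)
Your outline is workable, but the paper's route is cleaner and avoids the $\tau$-split entirely. After rewriting, for $k=[\beta]+1$,
\[
(e^{t\Delta_d}-I)^{k}f(n)=\sum_{j\ne 0}\bigl(f(n-j)-f(n)\bigr)\,T(t,j),\qquad T(t,j)=\sum_{l=0}^{k}(-1)^{k-l}\binom{k}{l}G(lt,j),
\]
the paper uses the Fourier formula \eqref{fourier} to write $T(t,j)=\frac{1}{2\pi}\int_{-\pi}^{\pi}e^{-ij\theta}(e^{-4t\sin^{2}(\theta/2)}-1)^{k}\,d\theta$; bounding $|e^{-ij\theta}|\le1$ and applying the scalar identity \eqref{fractpow} with $\lambda=4\sin^{2}(\theta/2)$ gives at once $\int_{0}^{\infty}|T(t,j)|\,t^{-1-\beta}\,dt\le C\int_{-\pi}^{\pi}(\sin^{2}(\theta/2))^{\beta}\,d\theta<\infty$ uniformly in $j$, so both the small-$\tau$ and large-$\tau$ regimes (and the Fubini justification you flag as the main obstacle) are handled by one stroke. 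The spatial decay $\int_{0}^{\infty}|T(t,j)|\,t^{-1-\beta}\,dt\le C(1+|j|)^{-1-2\beta}$ for $|j|\ge k$ then follows directly from \eqref{IntFractBessel} applied termwise, with no dichotomy needed. Your split-at-$\tau=1$ plan, using the $\tau^{[\beta]+1}$ cancellation gain near $0$ and crude heat-kernel bounds near $\infty$, can be pushed through but is more laborious (and note the exponent after the gain is $\tau^{[\beta]-\beta}=\tau^{-\{\beta\}}$, not $\tau^{\{\beta\}}$ or $\tau^{\{\beta\}-1}$; integrability is unaffected). For the identification of $K_{\beta}$ via the symbol $(4\sin^{2}(\theta/2))^{\beta}$ and the vanishing $\sum_{j}K_{\beta}(j)=0$, your Fourier argument coincides with the paper's.
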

\begin{proof}
First note that since $f\in \ell_{\beta},$ $f$ has polynomial growth and then $e^{t\Delta_d} f$ is well defined. Let $k\in\N$ such that $k-1\leq \beta<k$ (so $k=[\beta]+1$). Then 
\begin{align*}
(e^{t\Delta_d}-I)^k f(n)&=\sum_{l=0}^k(-1)^{k-l}\binom{k}{l}e^{tl\Delta_d}f(n)\\
&=\sum_{l=1}^k (-1)^{k-l}\binom{k}{l}\biggl(\sum_{j\in\N}G(lt,j)(f(n+j)+f(n-j))+G(lt,0)f(n)\biggr)+(-1)^kf(n).
\end{align*} 

Since $-1=\sum_{l=1}^k(-1)^l\binom{k}{l}$ and $G(lt,0)-1=-2\sum_{j\in\N}G(lt,j),$ one obtains that
\begin{align*}
(-1)^kf(n)\left( \sum_{l=1}^k (-1)^{l}\binom{k}{l}G(lt,0)-\sum_{l=1}^k (-1)^{l}\binom{k}{l}\right)=(-1)^kf(n)\sum_{l=1}^k (-1)^{l}\binom{k}{l}(-2\sum_{j\in\N}G(lt,j))
\end{align*}
and therefore 
\begin{align*}
(e^{t\Delta_d}-I)^kf(n) &= \sum_{j\in\N}(f(n+j)+f(n-j)-2f(n))\sum_{l=1}^k(-1)^{k-l}\binom{k}{l}G(lt,j)\\
&=\sum_{j\in\Z}(f(n-j)-f(n))\sum_{l=1}^k(-1)^{k-l}\binom{k}{l}G(lt,j)\\
&=\sum_{j\in\Z\setminus\{0\}}(f(n-j)-f(n))\sum_{l=1}^k(-1)^{k-l}\binom{k}{l}G(lt,j).
\end{align*}

Now we denote $$T(t,j):=\sum_{l=1}^k(-1)^{k-l}\binom{k}{l}G(lt,j)=\sum_{l=0}^k(-1)^{k-l}\binom{k}{l}G(lt,j),\quad j\in\Z\setminus\{0\},$$ where in the last identity we have used that $G(0,j)=0$ for $j\neq 0.$

From \eqref{fourier} and the fact that $\sum_{l=0}^k\int_{-\pi}^\pi \left|\binom{k}{l}e^{-ij\theta}e^{-4lt\sin^2\theta/2}\right|\,d\theta<\infty$, we can apply Fubini's theorem to get that $$T(t,j)=\frac{1}{2\pi}\int_{-\pi}^\pi e^{-ij\theta}(e^{-4t\sin^2 \theta/2}-1)^k\,d\theta.$$

By \eqref{fractpow}, observe that for all $j\neq 0$ $$\int_0^{\infty}|T(t,j)|\frac{dt}{t^{1+\beta}}\leq C\int_{-\pi}^\pi\int_0^{\infty} (1-e^{-4t\sin^2\theta/2})^k\frac{dt}{t^{1+\beta}}\,d\theta\leq C\int_{-\pi}^{\pi}(\sin^2\theta/2)^\beta\,d\theta<\infty,$$ and therefore \begin{align*}
\frac{1}{c_{\beta}}\int_0^{\infty}T(t,j)\frac{dt}{t^{1+\beta}}&=\frac{4^{\beta}}{2\pi}\int_{-\pi}^{\pi}e^{-ij\theta}(\sin^2 \theta/2)^\beta\,d\theta=\frac{ 4^{\beta}}{\pi}\int_{-\pi}^{0}\cos(j\theta)(\sin^2 \theta/2)^\beta\,d\theta\\
&=\frac{2}{\pi}4^{\beta}\int_{-\pi/2}^{0}\cos(2j\theta)(\sin^2 \theta)^\beta\,d\theta=\frac{2}{\pi}4^{\beta}(-1)^j\int_{0}^{\pi/2}\cos(2j\theta)\cos^{2\beta} \theta\,d\theta\\
&=K_{\beta}(j),
\end{align*}
{see \cite[Section 2.5.12, formula (22)]{PBM}}.

Finally, for $|j|\geq k$ we have by \eqref{IntFractBessel} that $$\int_0^{\infty}|T(t,j)|\frac{dt}{t^{1+\beta}}\leq C\sum_{l=1}^k\int_0^{\infty}G(lt,j)\frac{dt}{t^{1+\beta}}\leq \frac{C}{1+|j|^{1+2\beta}}.$$

Therefore, we have proved that $(-\Delta_d)^{\beta}f$ is well-defined,  that $$
|(-\Delta_d)^{\beta}f(n)|\leq C\sum_{j\in\Z}|f(n-j)-f(n)|\frac{1}{1+|j|^{1+2\beta}}\leq C\sum_{j\in\Z}\frac{|f(j)|}{1+|n-j|^{1+2\beta}},
$$
and that $(-\Delta_d)^{\beta}f(n)=\sum_{j\in\Z}(f(n-j)-f(n))K_{\beta}(j).$



\end{proof}

\begin{remark}\label{nucleoderiv}
Some observations are now in order:
\begin{itemize}
    \item Note that if $\beta \in\N_0,$ the definition of $K_{\beta}$ (see \eqref{FractKernel}) implies that $K_{\beta}$ is a sequence of compact support, so $K_{\beta}$ belongs to $\ell^1(\Z)$. Also, if $\beta>0$ is not  a natural number, then the proof above gives that $|K_{\beta}(j)|\leq \frac{C}{1+|j|^{1+2\beta}}$ for all $j\in\Z.$ So $K_{\beta}\in\ell^1(\Z)$ for all $\beta\geq 0.$ Moreover,  in the previous proof one also have that $K_{\beta}(0)=\frac{4^{\beta}}{2\pi}\int_{-\pi}^{\pi}(\sin^2 \theta/2)^\beta\,d\theta,$ so $K_{\beta}(j)$ are the Fourier coefficients of the function $(2-z-1/z)^{\beta}=(4\sin^2\theta/2)^{\beta},$ $z=e^{i\theta}\in\mathbb{T}.$ Taking $z=1$ we get $$\sum_{j\in\Z}K_{\beta}(j)=0,$$ so if $f\in\ell_{\beta}$, then 
    $$
    (-\Delta_d)^{\beta}f(n)=\sum_{j\in\Z}K_{\beta}(j)f(n-j).
    $$
    \item Lemma \ref{lbeta} extends and complements \cite[Theorem 1.1 (i) and Theorem 1.3 (i)]{CRSTV}.
    \item When $\beta$ is a natural number, the expression $(-\Delta_d)^\beta f(n)=\frac{1}{c_{\beta}}\int_0^\infty\left(e^{\tau \Delta_d}-I\right)^{[\beta]+1}f(n) \frac{d\tau}{\tau^{1+\beta}}$ given at the beginning of this section coincides with the classical power $(-\Delta_d)^{\beta}f(n)$ whenever $f\in\ell_\beta$ (recall that any power of $\Delta_d f$ is defined for every sequence $f$).
    \end{itemize}
\end{remark}

\begin{lemma}\label{sequeda}
Let $f:\Z\to\R$.
\begin{itemize}
\item If $f\in{\ell}_{ -\beta}$, $0<\beta<1/2,$ then for every $s>0,$ $e^{s\Delta_d}f\in{\ell}_{-\beta}.$
\item If $f\in{\ell}_{ \beta}$, $\beta>0,$ then for every $s>0,$ $e^{s\Delta_d}f\in{\ell}_{\beta}.$
\end{itemize}
\end{lemma}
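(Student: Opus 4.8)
The plan is to reduce both statements to the convolution estimate $|e^{s\Delta_d}f(n)|\le C\sum_{j\in\Z}G(s,j)(1+|n|^{\delta}+|j|^{\delta})$ where $\delta$ is the relevant polynomial exponent, and then split the sum $\sum_n$ defining membership in $\ell_{\pm\beta}$ according to the size of $|j|$ relative to $|n|$, using the moment bounds $\sum_{j\in\Z}|j|^{\delta}G(s,j)=e^{s}p(s)e^{-s}\le C(s)$ coming from \eqref{polyI}. The key point is that for fixed $s>0$ the quantity $\sum_{j\in\Z}(1+|j|^{\delta})G(s,j)$ is a finite constant (depending on $s$), so $e^{s\Delta_d}f$ inherits the same polynomial growth order as $f$, and hence the same summability weight.

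First I would treat the case $f\in\ell_{-\beta}$, $0<\beta<1/2$. Since $f\in\ell_{-\beta}$ forces $f$ to have at most polynomial growth (indeed $\ell_{-\beta}\subset\ell_\gamma$ trivially), Lemma \ref{semigroupP} A(i) guarantees $e^{s\Delta_d}f$ is well defined. To check $e^{s\Delta_d}f\in\ell_{-\beta}$, write
\begin{align*}
\sum_{n\in\Z}\frac{|e^{s\Delta_d}f(n)|}{(1+|n|)^{1-2\beta}}
&\le \sum_{n\in\Z}\frac{1}{(1+|n|)^{1-2\beta}}\sum_{j\in\Z}G(s,n-j)|f(j)|\\
&=\sum_{j\in\Z}|f(j)|\sum_{n\in\Z}\frac{G(s,n-j)}{(1+|n|)^{1-2\beta}},
\end{align*}
where Tonelli's theorem justifies the interchange. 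Now I would bound the inner sum: split into $|n-j|\le |j|/2$ and $|n-j|>|j|/2$. On the first region $|n|\ge |j|/2$, so $(1+|n|)^{-(1-2\beta)}\le C(1+|j|)^{-(1-2\beta)}$ and the remaining $G$-sum is $\le 1$; on the second region $|n-j|>|j|/2$, and using the Gaussian-type decay of $G(s,\cdot)$ away from the origin (via $\sum_{|m|>|j|/2}G(s,m)|m|^{1}\le C(s)$ from \eqref{polyI}, which controls $\sum_{|m|>|j|/2}G(s,m)\le C(s)/(1+|j|)$, actually with arbitrary polynomial gain) one gets a factor decaying in $|j|$ faster than any power, so that region contributes $\le C(s)(1+|j|)^{-(1-2\beta)}$ as well. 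Hence $\sum_n \le C(s)\sum_j |f(j)|(1+|j|)^{-(1-2\beta)}<\infty$, i.e. $e^{s\Delta_d}f\in\ell_{-\beta}$.

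For the second bullet, $f\in\ell_\beta$ with $\beta>0$ arbitrary, the argument is identical after replacing the weight $(1+|\cdot|)^{-(1-2\beta)}$ by $(1+|\cdot|)^{-(1+2\beta)}$; the only new feature is that when $\beta\ge 1/2$ the weight is genuinely summable against polynomial growth, which is even more favourable, so no extra care is needed there. I expect the main (and only real) obstacle to be bookkeeping the constant's dependence on $s$ and verifying that the tail sum $\sum_{|m|>|j|/2}G(s,m)$ decays in $|j|$ fast enough: this follows because for each $k\in\N$, $\sum_{m\in\Z}|m|^{2k}G(s,m)=e^{-2s}\,e^{2s}p_k(2s)=p_k(2s)<\infty$ by \eqref{polyI}, so $\sum_{|m|>|j|/2}G(s,m)\le (2/|j|)^{2k}p_k(2s)$, which beats any fixed polynomial weight by choosing $k$ large. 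Everything else is routine application of Tonelli and the triangle inequality.
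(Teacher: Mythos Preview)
Your argument is correct and in fact cleaner than the paper's. Both proofs start from the same Tonelli step
\[
\sum_{n\in\Z}\frac{|e^{s\Delta_d}f(n)|}{(1+|n|)^{1\mp 2\beta}}
\le \sum_{j\in\Z}|f(j)|\sum_{n\in\Z}\frac{G(s,n-j)}{(1+|n|)^{1\mp 2\beta}},
\]
but the paper then carries out a rather lengthy case analysis on the signs of $j$ and on several ranges of the inner variable (splitting further according to whether $|j|$ lies below or above $\sqrt{s}$, and whether $u$ lies in $[1,j/2]$, $[j/2,j]$, etc.), each piece being bounded via the moment identity \eqref{polyI}. Your symmetric split $|n-j|\le|j|/2$ versus $|n-j|>|j|/2$, combined with the Chebyshev-type tail estimate $\sum_{|m|>|j|/2}G(s,m)\le (2/|j|)^{2k}p_k(2s)$ (also from \eqref{polyI}), achieves the same bound with less bookkeeping and no sign cases. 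The trade-off is only in presentation: the paper's proof makes every constant explicit in terms of $p_k(2s)$, whereas yours absorbs these into $C(s)$.

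One small correction: your appeal to Lemma~\ref{semigroupP}~A(i) for well-definedness is not quite right, since $f\in\ell_{-\beta}$ does \emph{not} imply a pointwise bound $|f(n)|\le C(1+|n|^\alpha)$. This is harmless, though, because your Tonelli computation already shows the iterated sum is finite, hence $\sum_j G(s,n-j)|f(j)|<\infty$ for every $n$, which is exactly what is needed.
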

\begin{proof}

Suppose that $f\in{\ell}_{ -\beta}$, for some $0<\beta<1/2$ and let $s>0$.
Then,
\begin{align*}
    \sum_{m\in\Z}\frac{|e^{s\Delta_d}f(m)|}{1+|m|^{1-2\beta}}&\le \sum_{m\in\Z}\frac{\sum_{j\in\Z}G(s,m-j)|f(j)|}{1+|m|^{1-2\beta}}=\frac{\sum_{j\in\Z}|f(j)|\sum_{u\in\Z}G(s,u)}{1+|j+u|^{1-2\beta}}\\
     &\le \sum_{j\in\N}\left( \sum_{u=-\infty}^{-(j+1)}+\sum_{u=-j}^{-1}+\sum_{u=0}^\infty\right)\frac{G(s,u)}{1+|j+u|^{1-2\beta}}|f(j)|\\
    &+ \sum_{j\in\Z_-} \sum_{u\in\Z}\frac{G(s,u)}{1+|j+u|^{1-2\beta}}|f(j)|+\sum_{u\in\Z}\frac{G(s,u)}{1+|u|^{1-2\beta}}|f(0)|.\\
\end{align*}
Observe that the last sum is clearly bounded. On the other hand, 
$$
\sum_{j\in\N}\sum_{u=0}^\infty\frac{G(s,u)}{1+|j+u|^{1-2\beta}}|f(j)|\le \sum_{j\in\N}\frac{|f(j)|}{1+|j|^{1-2\beta}}\sum_{u=0}^\infty{G(s,u)}\le C<\infty.
$$
Now we split the sum in $j\in\N$ into two, obtaining
\begin{align*}
   & \sum_{j=1}^{[\sqrt{s}]+1}|f(j)|\sum_{u=j+1}^\infty \frac{G(s,u)}{1+(u-j)^{1-2\beta}}+ \sum_{j=1}^{[\sqrt{s} ]+1}|f(j)|\sum_{u=1}^j \frac{G(s,u)}{1+(j-u)^{1-2\beta}}\\
    &\qquad\le \sum_{j=1}^{[\sqrt{s}]+1}|f(j)|\sum_{u=1}^\infty{G(s,u)}\le C_s
\end{align*}
and, by using \eqref{polyI},
\begin{small}
\begin{align*}
   \sum_{j=[\sqrt{s}]+1}^\infty\sum_{u=j+1}^\infty& \frac{G(s,u)|f(j)|}{1+(u-j)^{1-2\beta}} + \sum_{j=[\sqrt{s} ]+1}^\infty\sum_{u=1}^{[j/2]} \frac{G(s,u)|f(j)|}{1+(j-u)^{1-2\beta}}+\sum_{j=[\sqrt{s} ]+1}^\infty\sum_{u=[j/2]+1}^j \frac{G(s,u)|f(j)|}{1+(j-u)^{1-2\beta}}\\
   &\le  \sum_{j=[\sqrt{s}]+1}^\infty|f(j)|\sum_{u=j+1}^\infty \frac{G(s,u)}{1+u^{1-2\beta}} (1+u^{1-2\beta})+ \sum_{j=[\sqrt{s} ]+1}^\infty\sum_{u=1}^{[j/2]} \frac{G(s,u)|f(j)|}{1+(j/2)^{1-2\beta}}\\
   &\qquad+\sum_{j=[\sqrt{s} ]+1}^\infty \frac{|f(j)|}{\left(\frac{1}{2}\right)^{1-2\beta}+\left(\frac{j}{2}\right)^{1-2\beta}}\sum_{u=[j/2]+1}^j G(s,u)\left( \left(\frac{1}{2}\right)^{1-2\beta}+\left(\frac{j}{2}\right)^{1-2\beta}\right)\\
    &\le C\sum_{j=[\sqrt{s}]+1}^\infty\frac{|f(j)|}{1+j^{1-2\beta}}\sum_{u=1}^\infty{G(s,u)}(1+u^{1-2\beta})
    \le C(1+p_k(2s)),
\end{align*}
\end{small}
where $k$ is the least natural number such that $1-2\beta<2k$. 
For the sum with $j\in\Z_-$ we can proceed similarly. We left the details to the interested reader.

Now assume that $f\in\ell_\beta$, for some $\beta>0. $ Then we can proceed in a completely analogous way as in the previous case, but now the power will be $1+2\beta$, instead of $1-2\beta$.
\end{proof}

Now we prove our main results of this section.
\vspace{0.4 cm}
  
   {\it Proof of Theorem \ref{Besselpot}.}
   Let $f\in \Lambda_H^{\alpha}$ and $\ell=[\frac{\alpha+\beta}{2}]+1$.
    From Lemma \ref{semigroupP} we have that 
    \begin{align*}
| ({I}-\Delta)^{-\beta/2} f(n)|&\le C  \int_0^\infty e^{-\tau} (1+|n|^\alpha+\tau^{\alpha/2})\tau^{\beta/2}\frac{d\tau}{\tau}      \\
&\le C (1+|n|^\alpha)\int_0^\infty e^{-\tau} (1+\tau^{\alpha/2})\tau^{\beta/2}\frac{d\tau}{\tau} \le  C (1+|n|^{\alpha+\beta}),\quad n\in\Z.
    \end{align*}
  Now we prove the condition on the semigroup.  By using again Lemma \ref{semigroupP}  we obtain that  \begin{align*}|
\partial_te^{t\Delta_d}f(n)|&=|\Delta_de^{t\Delta_d}f(n)|=|e^{t\Delta_d}f(n+1)+e^{t\Delta_d}f(n-1)-2e^{t\Delta_d}f(n)|\\
&\le C (1+|n|^\alpha+t^{\alpha/2}), \:\; n\in\Z, \:\;t>0.
\end{align*}
Thus,
\begin{align*}|
\partial_t^2e^{t\Delta_d}f(n)|&=|\partial_t(\Delta_de^{t\Delta_d}f(n))|=|\partial_te^{t\Delta_d}f(n+1)+\partial_te^{t\Delta_d}f(n-1)-2\partial_te^{t\Delta_d}f(n)|\\
&\le C(1+|n|^\alpha+t^{\alpha/2}), \:\; n\in\Z, \:\;t>0.
\end{align*}
By iterating the arguments, we have that $|\partial_t^\ell e^{t\Delta_d}f(n)|\le C (1+|n|^\alpha+t^{\alpha/2}), \:\; n\in\Z, \:\;t>0.$
   
   Therefore,  can introduce the derivatives inside the integral and by using Remark \ref{obs} and Lemma \ref{subirk} we have that 
    \begin{align*}
    |\partial_y^\ell e^{y\Delta_d} (({I}-\Delta)^{-\beta/2} f)(n)|&=\left|\frac{1}{\Gamma(\beta/2)}\int_0^\infty e^{-\tau }\partial_y^\ell e^{y\Delta_d}(e^{\tau\Delta_d} f)(n) \tau^{\beta/2}\frac{d\tau}{\tau}\right|\\
    &\le  C_\beta\int_0^\infty e^{-\tau}|(\partial_w^\ell e^{w\Delta_d}f(n)\Big|_{w=y+\tau})|\tau^{\beta/2}\frac{d\tau}{\tau}
    \\&\le  C_\beta\int_0^\infty e^{-\tau}(y+\tau)^{-\ell+\alpha/2}\tau^{\beta/2}\frac{d\tau}{\tau}
    \\&\stackrel{\frac{\tau}{y}=u}{\le } C_\beta {y^{\alpha/2+\beta/2-\ell}}\int_0^\infty \frac{u^{\beta/2}e^{-yu}}{(1+u)^{\ell-\alpha/2}}\frac{du}{u}
    \\&\le  C_\beta y^{\alpha/2+\beta/2-\ell}.
    \end{align*}
    
    When $f\in \ell^\infty(\Z)$ we proceed analogously by using that, for $\ell= [\beta/2]+1$, $$\|\partial_u^\ell e^{u\Delta_d}  f\|_\infty \le\sup_{n\in\Z}\sum_{j\in\Z}|\partial_u^\ell G(u,j)||f(n-j)|\le C \frac{\|f\|_{\infty}}{u^{\ell}}, \qquad u>0,$$ 
    see Lemma \ref{kernelest} and Remark \ref{Remkernelest}.
    \edproof
 \vspace{0.4 cm}

     

{\it Proof of Theorem \ref{teoSchau}.} We shall prove that if $f\in 	\Lambda_H^{\alpha}\cap \ell_{-\beta}$, then $$\frac{|(-\Delta)^{-\beta}f|}{1+|\cdot|^{\alpha+2\beta}}\in\ell^\infty(\Z).$$
Let $f\in 	\Lambda_H^{\alpha}\cap \ell_{-\beta}$. Since \eqref{potenciapuntual} holds, from  we have that 
\begin{align*}
    |(-\Delta)^{-\beta}f(n)|\le \sum_{j\in\Z}\frac{|f(j)|}{1+|n-j|^{1-2\beta}}= \sum_{|n-j|>2|n|}\frac{|f(j)|}{1+|n-j|^{1-2\beta}}+\sum_{|n-j|\le 2|n|}\frac{|f(j)|}{1+|n-j|^{1-2\beta}}.
\end{align*}
Since $|n-j|\ge \frac{|j|}{2}$ when $|n-j|>2|n|,$ by using that $f\in \ell_{-\beta}$, we get that the first summand is bounded.

On the other hand, by using that $\frac{|f|}{1+|\cdot|^\alpha}\in\ell^\infty(\Z)$ and $|j|\le 3|n|$ when $|n-j|\le 2|n|$,  we have that
\begin{align*}
    \sum_{|n-j|\le 2|n|}\frac{|f(j)|}{1+|n-j|^{1-2\beta}}\le C (1+|n|^\alpha)\sum_{|n-j|\le 2|n|}\frac{1}{1+|n-j|^{1-2\beta}}\le C(1+|n|^{\alpha+2\beta}).
\end{align*}

Following the same steps, it can be proved that  for $f\in \ell^\infty(\Z)\cap \ell_{-\beta}$, we have that $\frac{|(-\Delta)^{-\beta}f|}{1+|\cdot|^{2\beta}}\in\ell^\infty(\Z).$

Let $n\in\Z.$ From Lemma \ref{sequeda} we know that, for every $y>0,$ $e^{y\Delta_d}f\in\ell_{-\beta}$. Moreover, since $\partial_ye^{y\Delta_d}g(n)=\Delta_de^{y\Delta_d}g(n)$ we can introduce the derivatives inside the integral and apply Fubini's theorem so that, for every $\ell\in\N,$
$$
|\partial_y^\ell e^{y\Delta_d} ((-\Delta)^{-\beta} f)(n)|=\left|\frac{1}{\Gamma(\beta)}\int_0^\infty \Delta_d^\ell e^{\tau\Delta_d}(e^{y\Delta_d} f)(n) \tau^{\beta}\frac{d\tau}{\tau}\right|<\infty.
$$

The rest of the proof of $\displaystyle\|\partial_y^\ell e^{y\Delta_d} ((-\Delta)^{-\beta} f)\|_\infty\le Cy^{-\ell+{\alpha+2\beta}}$, $\ell=[\alpha+2\beta]+1$, follows the same steps as the corresponding proof on Theorem \ref{Besselpot}. 
\edproof

 \vspace{0.4 cm}
 \newpage
 
 {\it Proof of Theorem \ref{teoHolder}.}
 
 We prove first (i). Let $f\in 	\Lambda_H^{\alpha}\cap \ell_{\beta}$, $\alpha>2\beta.$ Then, by proceeding in a completely analogous way as in Theorem \ref{teoSchau}, but now the power will be $2\beta$, instead of $-2\beta$, we get that $$\frac{|(-\Delta)^{\beta}f|}{1+|\cdot|^{\alpha-2\beta}}\in\ell^\infty(\Z).$$



 Now we prove the condition on the semigroup.
 
Let $n\in\Z$ and  $\ell=[\beta]+1$. From Lemma \ref{sequeda} we know that, for every $y>0,$ $e^{y\Delta_d}f\in\ell_{\beta}$. Moreover, since $\partial_ye^{y\Delta_d}g(n)=\Delta_de^{y\Delta_d}g(n)$ we can introduce the derivatives inside the integral and apply Fubini's theorem so that, for every $m\in\N,$
\begin{align*}
\Big|\partial_y^me^{y\Delta_d}( (-\Delta_d)^{{\beta}} f)&(n) \Big|= \Big|\frac{1}{c_\beta} \partial_y^me^{y\Delta_d}\Big( \int_0^\infty \int_0^t \underbrace{\dots}_{\substack{\ell}} \int_0^t \partial_\nu^\ell e^{\nu\Delta_d} |_{\nu= s_1+\dots+s_\ell} f(n)ds_1\dots ds_\ell  \frac{dt}{t^{1+\beta}} \Big)\Big|\\
&=	  	\Big|C\int_0^\infty \Big( \int_0^t \underbrace{\dots}_{\substack{\ell}} \int_0^t \partial_\nu^{m+\ell}e^{\nu\Delta_d} |_{\nu=y+ s_1+\dots+s_\ell}f(n) ds_1\dots ds_\ell \Big) \frac{dt}{t^{1+\beta}}\Big|\\&=	  	\Big|C\int_0^\infty \Big( \int_0^t \underbrace{\dots}_{\substack{\ell}} \int_0^t \Delta_d^{m+\ell}e^{\nu\Delta_d} |_{\nu=y+ s_1+\dots+s_\ell}f(n) ds_1\dots ds_\ell \Big) \frac{dt}{t^{1+\beta}}\Big|<\infty.
\end{align*}
Let $m= \left[\frac{\alpha}{2}-\beta\right]+1$. Then, $m+\ell=\left[\frac{\alpha}{2}-\beta\right]+1+[\beta]+1 >\alpha/2-\beta+\beta=\alpha/2$. As $m+\ell\in \mathbb{N}$ we get $m+\ell \ge[\alpha/2]+1.$ Therefore, by using Lemma \ref{subirk}, we get that
	  	\begin{align*}
	  	\Big|\partial_y^me^{y\Delta_d}( (-\Delta_d)^{{\beta}} f)(n) \Big|
	  	&=
	  	\Big|C\int_0^\infty \Big( \int_0^t \underbrace{\dots}_{\substack{\ell}} \int_0^t \partial_\nu^{m+\ell}e^{\nu\Delta_d} |_{\nu=y+ s_1+\dots+s_\ell}f(n) ds_1\dots ds_\ell \Big) \frac{dt}{t^{1+\beta}}\Big|
	  	\\ &\le
	  	C \int_0^\infty \Big( \int_0^t \underbrace{\dots}_{\substack{\ell}} \int_0^t (y+s_1+\dots s_\ell)^{-(m+\ell) +\alpha/2} ds_1\dots ds_\ell \Big) \frac{dt}{t^{1+\beta}} \\&=
	  	C \int_0^y ( \dots )  \frac{dt}{t^{1+\beta}}  + C \int_y^\infty  (\dots ) \frac{dt}{t^{1+\beta}} =C\,[(I) +(II)].
	  	\end{align*}
	   Now we shall estimate $(I)$ and $(II)$.
		  	\begin{align*} 
	  	(I)  &= 
	  	C y^{-m+\alpha/2} \int_0^y  \int_0^{t/y} \underbrace{\dots}_{\substack{\ell}} \int_0^{t/y} (1+s_1+\dots s_\ell)^{-(m+\ell) +\alpha/2} ds_1\dots ds_\ell \frac{dt}{t^{1+\beta}}  \\&\le 
	  	C\, y^{-m+\alpha/2} \int_0^y   \Big(\frac{t}{y}\Big) ^\ell \frac{dt}{t^{1+\beta}}
	  	= C\,y^{-m+\alpha/2-\beta}.
	  	\end{align*}
	   On the other hand,
	  	\begin{align*}
	  	(II)  &\le  \int_y^\infty  \sum_{j=0}^\ell \frac{C_j}{(y+jt)^{m-\alpha/2}} \frac{dt}{t^{1+\beta}} =    \sum_{j=0}^\ell \int_y^\infty \frac{C_j}{(y+jt)^{m-\alpha/2}} \frac{dt}{t^{1+\beta}} \le   \sum_{j=0}^\ell C_j y^{-m+\alpha/2-\beta}.
	  	\end{align*}
	  The last inequality is obtained by observing that $ y \le y+jt \le (1+\ell) t$ inside the integrals together with the  discussion  about the sign of $m-\alpha/2$.

	  	Finally we prove (ii). Assume that $\beta\in\N$ and $f\in 	\Lambda_H^{\alpha}. $
	  		  	Understanding now 
   	$(-\Delta_d)^\beta f$ as the $\beta$-times iteration  of $(-\Delta_d)$, and taking into account that $-\Delta_df(n)=\delta_{right}^2f(n-1)$, the result follows from applying $2\beta$ times Theorem \ref{derivada}.
   	
	  	\edproof

\bibliographystyle{siam}
\bibliography{references2}

\end{document}